\DeclareMathOperator*{\tr}{\text{Tr}}
\newcommand{\be}{\begin{equation}}
\newcommand{\ee}{\end{equation}}
\newcommand{\ba}{\begin{aligned}}
\newcommand{\ea}{\end{aligned}}
\newtheorem{lem}{Lemme}[section]
\newtheorem{thm}{Th\'eor\`eme}
\newtheorem{prop}[lem]{Proposition}
\newtheorem{cor}[lem]{Corollaire}
\newtheorem{remk}[lem]{Remarque}
\newtheorem*{thm*}{Th\'eor\`eme}
\numberwithin{equation}{section}
\numberwithin{figure}{section}
\begin{document}

%\renewcommand{\refname}{References}
%\bibliographystyle{plain}

%----------- TITRE --------------%
\title[Sobolev probabiliste]{Injections de Sobolev probabilistes et applications}
\author{Nicolas Burq}
\address{Laboratoire de Math\'ematique d'Orsay, Universit\'e Paris-Sud, UMR 8628 du CNRS, 91405 Orsay Cedex, France} \author{Gilles Lebeau}
\address{D\'epartement de Math\'ematiques, Universit\'e de Nice Sophia-Antipolis
 Parc Valrose 06108 Nice Cedex 02,   France et Institut Universitaire de France
}
\email{lebeau@unice.fr}

\begin{abstract}
On d\'emontre dans cet article des versions probabilistes des injections de Sobolev sur une vari\'et\'e riemannienne compacte, $(M,g)$. Plus pr\'ecisement on d\'emontre que pour des mesures de probabilit\'e naturelles sur l'espace $L^2(M)$, presque toute fonction appartient \`a tous les espaces $L^p(M)$, $p<+\infty$. On donne ensuite des applications \`a l'\'etude des harmoniques sph\'eriques sur la sph\`ere $\mathbb{S}^d$: on d\'emontre (encore pour des mesures de probabilit\'e naturelles) que presque toute base Hilbertienne de $L^2( \mathbb{S}^d)$ form\'ee d'harmoniques sph\'eriques a tous ses \'el\'ements uniform\'ement born\'es dans tous les espaces $L^p(\mathbb{S}^d), p<+\infty$ On d\'emontre aussi des r\'esultats similaires sur les tores $\mathbb{T}^d$. On donne aussi une application \`a l'\'etude du taux de d\'ecroissance de l'\'equation des ondes amortie dans un cadre o\`u la condition de contr\^ole g\'eom\'etrique de Bardos, Lebeau et Rauch n'est pas v\'erifi\'ee. En supposant le flot ergodique, on d\'emontre qu'il existe sur des ensembles de mesure arbitrairement proche de $1$ (dans l'espace des donn\'ees initiales d'\'energie finie), un taux de d\'ecroissance uniforme. Finalement, on conclut avec  une application \`a l'\'etude de l'\'equation des ondes semi-lin\'eaire $H^1$-surcritique, pour laquelle on d\'emontre que pour presque toute donn\'ee initiale, les solutions faibles sont fortes et uniques (localement en temps).
\end{abstract}
\begin{altabstract}In this article, we give probabilistic versions of Sobolev embeddings on any Riemannian manifold $(M,g)$. More precisely, we prove that for natural probability measures on  $L^2(M)$, almost every function belong to all spaces  $L^p(M)$, $p<+\infty$. We then give applications to the study of the growth of the $L^p$ norms of spherical harmonics on spheres $\mathbb{S}^d$: we prove (again for natural probability measures) that almost every Hilbert base of $L^2( \mathbb{S}^d)$ made of spherical harmonics has all its elements uniformly bounded in all $L^p(\mathbb{S}^d), p<+\infty$ spaces. We also prove similar results on tori $\mathbb{T}^d$. We give then an application to the study of the decay rate of damped wave equations in a frame-work where the geometric control property on Bardos-Lebeau-Rauch is not satisfied. Assuming that it is violated for a measure $0$ set of trajectories, we prove that there exists almost surely a rate. Finally, we conclude with an application to the study of the  $H^1$-supercritical wave equation, for which we prove that for almost all initial data, the weak solutions are strong and unique, locally in time.
\end{altabstract}
\maketitle
\tableofcontents
\section{Introduction}\label{sec0}
L'objet de cet article est de d\'emontrer que si on choisit des fonctions au hasard sur une vari\'et\'e compacte, pour des mesures de probabilit\'e  naturelles, alors il est possible d'am\'eliorer grandement les injections de Sobolev classiques. Plus pr\'ecisement notre cadre est le suivant.
Soit $(M, g)$ une variété riemannienne lisse  compacte, sans bord connexe 
de dimension $d$ et
$\mathbf{\Delta}$ le laplacien sur $(M,g)$. 
Soit $0=\omega_{0}<\omega_{1}\leq \omega_{2}\leq ...$ le spectre de  
$\sqrt{-\mathbf{\Delta}}$ et $(e_j)_{j\geq 0}$ 
 une base orthonormale $L^2$ de fonctions propres réelles, de sorte que
$-\mathbf{\Delta} e_j=\omega_j^2e_j$. 
Soient $0<a<b$ et  
$E_{h}$ le sous espace de $L^2(M)$
\begin{equation}\label{2.1}
E_h=\{u=\sum_{k\in I_h}z_ke_k(x), \  z_k\in \mathbb C\},  \quad I_h=\{k, \  h\omega_k\in ]a, b]\}.
\end{equation}
Soit $N_{h}= \dim(E_{h})$. D'apr\`es la formule de Weyl, avec reste pr\'ecis\'e (voir \cite[Theorem 1.1]{H68}), on a 
pour $h\in ]0,1]$
\begin{equation}\label{2.1bis}
N_{h}=(2\pi h)^{-d}\text{Vol}(M)\text{Vol}(S^{d-1})\int_{(a,b)} \rho^{d-1}d\rho +O(h^{-d+1}).
\end{equation}

Rappelons qu'il existe une constante $C$ ind\'ependante de $h\in ]0,1]$ telle 
qu'on a 
\begin{equation}\label{0.8}
\Vert u \Vert_{L^\infty(M)} \leq C h^{-d/2}\Vert u \Vert_{L^2(M)}
\quad \forall u\in E_{h}
\end{equation}
et que plus g\'en\'eralement, si $A(x,hD_x)$ est un opérateur $h$-pseudodifférentiel classique
sur $M$ de degr\'e $0$ et à support essentiel contenu dans $\{(x,\xi)\in T^*M, \  
\vert\xi\vert_x \leq L\}$ pour un $L<\infty$, il existe une constante $C$ ind\'ependante de $h\in ]0,1]$ telle que 
pour tout $1\leq p\leq r \leq\infty$, on~a 
\begin{equation}\label{0.8bis}
 \Vert A(x,hD_x)g \Vert_{L^r(M)} \leq C h^{-d({\frac 1 p}-{\frac 1 r})}\Vert g \Vert_{L^p(M)} \quad \forall g\in L^p(M).
 \end{equation}

\bigskip
Les in\'egalit\'es de Sobolev \eqref{0.8} ou \eqref{0.8bis} sont optimales.
L'objectif de cet article est d'\'etudier des versions probabilistes de ces 
in\'egalit\'es. D\'ecrivons rapidement le type de r\'esultats que nous obtenons: 

On note $S_{h}$ (resp. $\widetilde{S}_{h}$) la sph\`ere unit\'e de l'espace euclidien
$E_{h}=\mathbb C^{N_h}$ (resp. $\widetilde {E}_{h}=\mathbb R^{N_h}$), et $P_{h}$ (resp. $\widetilde{P}_{h}$) la probabilit\'e uniforme sur $S_{h}$ (resp. $\widetilde{S}_{h}$). On verra dans la section \ref{sec2} (voir en particulier le th\'eor\`eme \ref{thm2.1}) que les probabilit\'es $P_{h}$ et $\widetilde{P}_{h}$ sont associ\'ees \`a une r\'epartition uniforme de l'\'energie dans l'espace de 
phase $T^*M$ pour la mesure de Liouville canonique $d\lambda$ sur $T^*M$.
On notera $\mathbb E_{h}( f)=\int_{S_{h}}f(u)dP_{h}$
l'esp\'erance d'une variable al\'eatoire $f$, et $\Pi_{h}$ le projecteur orthogonal de $L^2(M)$
sur $E_{h}$.

On a alors le résultat probabiliste 
essentiellement classique suivant, qui estime la mesure des
$u\in E_{h}$ de grande norme $L^{\infty}$.

\begin{thm}\label{thm0.1}
Pour tout $c_2<\text{Vol}(M)$, il existe $C>0$ tel que pour tout $h\in ]0,1]$ et tout $\Lambda\geq 1$ on ait, avec $c_1=d(1+d/2)$

\begin{equation}\label{0.11}\begin{aligned}
P_{h}\big(u\in S_{h}, \   \Vert u \Vert_{L^\infty} > \Lambda\big)
&\leq Ch^{-c_1}e^{-c_2\Lambda^2},\\
\widetilde{P}_{h}\big(u\in \widetilde{S}_{h}, \   \Vert u \Vert_{L^\infty} > \Lambda\big)
&\leq Ch^{-c_1}e^{-c_2\Lambda^2}.
\end{aligned}
\end{equation}
\end{thm}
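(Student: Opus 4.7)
Le plan consiste \`a combiner une loi de Weyl locale, une estim\'ee de concentration explicite sur la sph\`ere, et un argument de discr\'etisation contr\^ol\'e par une in\'egalit\'e de Bernstein spectrale. L'id\'ee g\'en\'erale est de se ramener \`a un contr\^ole \emph{ponctuel} de $|u(x)|$ \`a $x\in M$ fix\'e, puis de remonter au $L^\infty$ via un r\'eseau suffisamment fin.

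Fixons $x \in M$ et posons $c(x) = (e_k(x))_{k \in I_h}$, de sorte que $u(x) = \sum_k z_k e_k(x) = \langle z, \overline{c(x)}\rangle$ est lin\'eaire en les coefficients. Pour $u$ uniforme sur $S_h$, la densit\'e explicite de la norme au carr\'e de la projection d'un vecteur uniforme d'une sph\`ere complexe sur une direction fournit
\[
P_h\bigl(|u(x)| > \Lambda\bigr) = \Bigl(1 - \tfrac{\Lambda^2}{\|c(x)\|^2}\Bigr)_{+}^{N_h - 1} \leq e^{-(N_h - 1)\Lambda^2 / \|c(x)\|^2},
\]
et une estim\'ee analogue (projection sur une direction r\'eelle) vaut pour $\widetilde P_h$. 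Par la loi de Weyl ponctuelle \`a reste uniforme en $x$, on a
\[
\|c(x)\|^2 = \sum_{k \in I_h} |e_k(x)|^2 = \tfrac{N_h}{\text{Vol}(M)} + O(h^{-d+1}),
\]
uniform\'ement en $x \in M$. On en d\'eduit que pour tout $c_2 < \text{Vol}(M)$ fix\'e et $h$ assez petit, $P_h(|u(x)| > \Lambda) \leq e^{-c_2 \Lambda^2}$ uniform\'ement en $x$.

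Pour remonter au $L^\infty$, on exploite la localisation spectrale de $E_h$ : une in\'egalit\'e de Bernstein (cons\'equence de \eqref{0.8bis} appliqu\'ee \`a $A = hD_x$) donne $\|\nabla u\|_{L^\infty} \leq C h^{-1} \|u\|_{L^\infty}$, et combin\'ee avec \eqref{0.8} elle donne $\|\nabla u\|_{L^\infty} \leq C h^{-1-d/2}$ sur $S_h$. On prend alors un $\delta$-r\'eseau $\{x_j\}_{j=1}^{N}$ de $M$ avec $\delta \sim h^{1+d/2}$, dont le cardinal est $N \lesssim \delta^{-d} = h^{-d(1+d/2)} = h^{-c_1}$. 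Si $\|u\|_{L^\infty} > \Lambda$, il existe $x_j$ du r\'eseau tel que $|u(x_j)| > \Lambda - 1$, et une borne d'union donne
\[
P_h\bigl(\|u\|_{L^\infty} > \Lambda\bigr) \leq N \max_j P_h\bigl(|u(x_j)| > \Lambda - 1\bigr) \leq C\, h^{-c_1} e^{-c_2 (\Lambda - 1)^2},
\]
ce qui entra\^ine \eqref{0.11} pour $\Lambda \geq 1$, quitte \`a diminuer un peu $c_2$ (toujours $< \text{Vol}(M)$).

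La difficult\'e centrale est d'assurer que le reste dans la loi de Weyl locale est \emph{uniforme en $x \in M$} : c'est cette uniformit\'e qui permet de prendre $c_2$ arbitrairement proche de $\text{Vol}(M)$, et elle repose sur une analyse microlocale fine de la fonction spectrale. L'apparition de la puissance $h^{-c_1} = h^{-d(1+d/2)}$ est quant \`a elle un artefact du r\'eseau, la vitesse de variation de $u \in E_h$ n'\'etant contr\^ol\'ee qu'\`a l'\'echelle $h^{1+d/2}$ apr\`es combinaison de Bernstein avec la borne $L^\infty$ d\'eterministe.
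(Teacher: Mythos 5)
Votre d\'emonstration reproduit fid\`element celle du papier (Th\'eor\`eme \ref{thm0.2} appliqu\'e au cas particulier) : loi exacte $(1-t^2)_+^{N_h-1}$ de la projection d'un vecteur uniforme de la sph\`ere complexe, loi de Weyl ponctuelle de H\"ormander avec reste $O(h^{-d+1})$ uniforme en $x$ pour contr\^oler $e_{x,h}=\|c(x)\|^2$, puis in\'egalit\'e de Bernstein et r\'eseau de maille $\sim h^{1+d/2}$ avec borne d'union. La seule diff\'erence, cosm\'etique, est que le papier prend un r\'eseau de maille $\varepsilon\Lambda h^{d/2+1}$ pour obtenir $(1-\varepsilon)\Lambda$ au lieu de $\Lambda-1$, ce qui \'evite d'avoir \`a r\'ediger l'absorption du terme crois\'e pour $\Lambda$ born\'e ; votre variante aboutit au m\^eme r\'esultat apr\`es l'ajustement de $c_2$ que vous indiquez.
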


Nous donnerons une preuve du th\'eor\`eme \ref{thm0.1} 
dans la section \ref{sec2}. L'estimation \eqref{0.11}
a une cons\'equence imm\'ediate sur 
les versions probabilistes des injections de Sobolev.
Rappelons que pour $p\in [1,\infty]$ et $s\geq 0$,  
l'espace de Sobolev $W^{s,p}$ est d\'efini par 
\begin{equation}\label{0.1}
W^{s,p}=\{f\in L^p(M), \  (1-\mathbf{\Delta})^{s/2} f \in L^p(M)\}.
\end{equation}
Les espaces $W^{s,p}$ sont indépendants du choix de la métrique $g$ sur M,
et pour $ 1\leq p \leq r <\infty$, on a les injections de  Sobolev 
\begin{equation}\label{0.2}
W^{s,p} \subset L^r, \quad  s=\frac{d}{p}-\frac{d}{r}.
\end{equation}
Rappelons aussi la construction de Littlewood-Paley. 
On fixe $0<a<c$, et $\varphi \in C^\infty(\mathbb R)$ tel que $\varphi(t)=0$ pour $t\leq a$,
$\varphi(t)=1$ pour $t\geq c$, et $\varphi'(t)>0$ pour $t\in ]a,c[$. On pose
$\psi_{-1}(t)=1-\varphi(t), \  \psi(t)=\varphi(t)-\varphi(t/2), \  \psi_n(t)=\psi(2^{-n}t)$ pour $n\geq 0$. Alors
$\psi$ est à support dans $[a,2c]$, $\psi(t)>0$ pour $t\in ]a,2c[$ 
et $1=\sum_{n\geq -1}\psi_n(t)$ pour tout $t$. 
Posons $b=2c>a>0$. Pour toute distribution $f\in \mathcal D'(M)$, on a $f=\sum c_k(f) e_k$, où les 
$c_k(f)=\int_M fe_k dx$ sont les coefficients de Fourier de $f$, et
la décomposition de Littlewood-Paley de f s'écrit, avec $h_k = 2^{-k}$, 
\begin{equation}\label{0.5}
f=\sum_{n=-1}^\infty f_n, \quad f_n= \psi_n(\sqrt{\vert\mathbf{\Delta}\vert})f=
\sum_k \psi_n(\omega_k) c_k(f) e_k ,  \  f_n \in E_{h_n} \  (n\geq 0 ).
\end{equation}
Rappelons que pour $q,r\in [1,\infty]$ et $s\in \mathbb R$, l'espace de Besov
$B^s_{q,r}$ est l'espace des distributions $f\in \mathcal D'(M)$ dont la décomposition de
 Littlewood-Paley vérifie
\begin{equation}\label{0.6}
  \text{la suite} \ n\rightarrow 2^{ns}\Vert f_n\Vert_{L^q(M)} \
\text{appartient à} \  l^r(\mathbb N).
\end{equation}
Les éléments de $E_{h_n}$ sont des fonctions à échelle $h_n=2^{-n}$ sur $M$, et les injections de Sobolev 
\eqref{0.2} peuvent \^etre vues comme conséquence des inégalités \eqref{0.8bis}. 

\noindent
Soit alors $X$ l'espace produit
\begin{equation}\label{0.9}
X=\Pi_{n=0}^\infty S_{h_n}.
\end{equation}
On munit $X$ de la probabilité produit $\mathbb P=\Pi_{n=0}^\infty P_{h_n}$. 
Soit $(a_{n})_{n\geq 0}$ une suite de r\'eels positifs telle que $\sum_{n}n^{1/2}a_{n} <\infty$.
Soit  $j$
l'application  de $X$ dans l'espace de Besov $B^0_{2,\infty} $
\begin{equation}\label{0.10}
\ba
X & \rightarrow B^0_{2,\infty} \\
g=(g_n)_{n\geq 0}&\mapsto j(g)=
\sum_{n=0}^\infty a_{n}g_n.
\ea
\end{equation}
Comme  corollaire immédiat du théorème \ref{thm0.1}, on obtient 

\begin{cor}\label{cor0.1}
On a $\mathbb P( j(g)\in C^0(M))=1$.
\end{cor}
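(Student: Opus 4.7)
\bp
The plan is a direct Borel--Cantelli argument based on Theorem \ref{thm0.1} applied at each dyadic scale $h_n=2^{-n}$.

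First, I would fix a constant $c_2<\text{Vol}(M)$ and, invoking \eqref{0.11} with $h=h_n$, write for every $\Lambda\geq 1$
\begin{equation*}
P_{h_n}\bigl(g_n\in S_{h_n},\ \Vert g_n\Vert_{L^\infty}>\Lambda\bigr)\leq C\, 2^{n c_1}e^{-c_2\Lambda^2}.
\end{equation*}
The natural threshold is $\Lambda_n=K\sqrt{n}$ with $K$ chosen so large that $c_2 K^2>c_1\log 2+1$; then $C\,2^{nc_1}e^{-c_2 K^2 n}\leq C e^{-n}$, which is summable in $n$. By the Borel--Cantelli lemma, the set
\begin{equation*}
\Omega_K=\bigl\{g\in X,\ \Vert g_n\Vert_{L^\infty(M)}\leq K\sqrt{n}\ \text{pour tout}\ n\ \text{assez grand}\bigr\}
\end{equation*}
has $\mathbb P$-measure $1$.

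Next, on the event $\Omega_K$ the hypothesis $\sum_n n^{1/2}a_n<\infty$ immediately yields
\begin{equation*}
\sum_{n=0}^\infty a_n\Vert g_n\Vert_{L^\infty(M)}\leq K\sum_{n\geq n_0}n^{1/2}a_n+\sum_{n<n_0}a_n\Vert g_n\Vert_{L^\infty}<\infty,
\end{equation*}
since the finitely many initial terms are bounded using \eqref{0.8}. Hence the partial sums of $\sum a_n g_n$ form a Cauchy sequence in $C^0(M)$, which is complete, so $j(g)=\sum_n a_n g_n\in C^0(M)$ on $\Omega_K$. Since $\Omega_K$ has full measure, $\mathbb P(j(g)\in C^0(M))=1$.

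I do not expect any real obstacle: the only slightly delicate point is matching the Gaussian decay $e^{-c_2\Lambda^2}$ in Theorem \ref{thm0.1} against the polynomial-in-$h^{-1}=2^n$ prefactor, which is exactly what forces the choice $\Lambda_n\asymp\sqrt{n}$ and explains the $n^{1/2}$ weight in the hypothesis $\sum n^{1/2}a_n<\infty$.
\ep
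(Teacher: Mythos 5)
Your proposal is correct and follows essentially the same route as the paper: the same threshold $\Lambda_n\asymp\sqrt n$ is forced by balancing the polynomial prefactor $h_n^{-c_1}=2^{nc_1}$ against the Gaussian tail $e^{-c_2\Lambda^2}$, and since $\mathbb P$ is a product measure, the paper's direct estimate $\mathbb P(B)=\prod_n P_{h_n}(B_n)\geq 1-\varepsilon$ and your Borel--Cantelli argument are two phrasings of the same convergence condition $\sum_n P_{h_n}(B_n^c)<\infty$. The only (cosmetic) difference is that the paper exhibits, for each $\varepsilon>0$, an explicit set $B$ of measure $\geq 1-\varepsilon$ on which $\Vert j(g)\Vert_{L^\infty}$ is bounded \emph{uniformly}, whereas Borel--Cantelli produces a single full-measure set on which the index $n_0$ past which the bound holds is random; both suffice for the stated conclusion.
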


\begin{remk} On notera que le corollaire \ref{cor0.1} est violent, puisqu'il implique 
en particulier une injection 
presque sure de $B^\sigma_{2,\infty}$ dans $C^0(M)$ pour tout $\sigma>0$, soit un 
gain de $d/2$ dérivées par rapport à l'injection de Sobolev.  
\end{remk}
\begin{proof}
Soit $A>0$ donn\'e, $m_n=(An\log(2))^{1/2}$ et $B_n$ la partie de $S_{2^{-n}}$
$$B_n=\{g_n, \  \Vert g_n \Vert_{ L^\infty}\leq m_n\}$$
D'après \eqref{0.11} on a 
\begin{equation}\label{0.13}
P_{h_n}( B_n) \geq 1-C 2^{-n(c_{2}A-c_1)}
\end{equation} 
Soit $B$ la partie de $X$, $B=S_{h_0}\times\Pi _{n=1}^\infty B_n$.
Pour $g=(g_n) \in B$ et
$f=j(g)$, on a
\begin{equation}\label{0.14}
\Vert f \Vert_{ L^\infty }\leq \sum_{n=0}^\infty a_{n}\Vert g_n \Vert_{ L^\infty}
\leq Ca_{0}+ (A\log(2))^{1/2}\sum_{n=1}^\infty n^{1/2}a_{n}.
\end{equation} 
On a alors pour tout $f\in j(B)$,
$f\in C^0(M)$ d'après \eqref{0.14}, puisque les $g_{n}$ sont continus, et d'après \eqref{0.13}
$$\mathbb P(B)=\prod_{n=1}^\infty P_{h_n}(B_n) \geq  \prod_{n=1}^\infty 
\Big(1-C2^{-n(c_{2}A-c_1)}\Big)\geq 1-\varepsilon,$$
avec $\varepsilon >0$ petit si  la constante $A$ est  
assez grande, d'où le résultat.
\end{proof}

La morale du corollaire \ref{cor0.1} est la suivante : 
si on se donne une famille de fonctions $g_h\in  E_{h}$
à échelle $h$ et 
d'énergie $1$ pour tout $h=2^{-n}$, et si on re-répartit leur énergie aléatoirement dans l'espace de phase,
on obtient une nouvelle famille de fonctions dans $E_{h}$ 
qui est  "presque" bornée dans le sens où 
$sup_h \Vert g_h\Vert_{L^\infty} \vert \log (h)\vert^{-1/2}$ l'est .

Nos constructions de mesures sur l'espace $L^2(M)$ (voir l'appendice C  pour la 
construction pr\'ecise) utilisent une 
d\'ecomposition orthogonale $L^2(M)=\oplus_{k} E_{k}$, o\`u les $E_{k}$ sont des sous-espaces 
de dimensions finis invariants 
par l'op\'erateur $\mathbf {\Delta}$. On choisit en particulier sur 
 chaque $E_{k}$ une probabilit\'e $P_{k}$
invariante par les isom\'etries de $E_{k}$, et on munit l'espace $L^2$ de la probabilit\'e
produit $P=\Pi_{k}P_{k}$. Dans notre cadre, si $\omega$ est la fr\'equence typique
des \'el\'ements de $E_{k}$, on a toujours $C_{1}\omega^{d-1}\leq \dim (E_{k})\leq C_{2}\omega^{d}$, et plus pr\'ecisemment, les fr\'equences $\omega$ des \'el\'ements
de $E_{k}$ v\'erifient $\omega \in (a_{k},b_{k}), a_{k}+ C\leq b_{k}\leq ca_{k}$,
avec $C>0,c>1$.
Le fait de choisir des espaces $E_{k}$ de "grande dimension" permet d'obtenir des
r\'esultats plus fort avec probabilit\'e $1$ que le choix  $E_{k}=\mathbb C e_{k}$,
qui v\'erifie  $\dim (E_{k})=1$, et pour lequel nous renvoyons aux travaux de N. Tzvetkov \cite{AT},
\cite{T1} et \cite{T2}. De plus, on verra dans la section \ref{sec2} comment  le choix que
nous faisons des $E_{k}$
permet de relier naturellement nos probabilit\'es \`a la mesure de Liouville sur $T^*M$.

\bigskip
 L'article est organis\'e comme suit. Dans la section~\ref{sec2} nous d\'emontrons le th\'eor\`eme~\ref{thm0.1} et des versions pr\'ecis\'ees, en autorisant  des localisations spectrales plus fines que (\ref{2.1}). Le th\'eor\`eme \ref{thm2.1} de la section \ref{sec2.2} pr\'ecise
 le fait que nos mesures sont associ\'ees \`a la mesure de Liouville sur $T^*M$.
 Dans la section \ref{sec2.3} nous d\'ecrivons pour $2<q\leq\infty$ les estimations $L^q$
  presques sures. On trouvera dans Shiffman-Zelditch~\cite{ShZe} des preuves  analogues 
 pour les  estim\'ees sur
les sections de fibr\'es holomorphes. Les bornes inf\'erieures que nous obtenons sur les
m\'edianes des normes $L^q$ semblent nouvelles.  Dans les sections suivantes, nous donnons des applications simples \`a l'\'etude de solutions d'\'equations aux d\'eriv\'ees partielles. Notre premi\`ere application (dans la section~\ref{sec3}) concerne la croissance des normes $L^p$ des harmoniques sph\'eriques (les fonctions propres du Laplacien sur les spheres $\mathbb{S}^d\subset \mathbb{R}^{d+1}$). Il est connu depuis les travaux de H\"ormander~\cite{H68} et de Sogge~\cite{So88} que sur toute vari\'et\'e riemannienne compacte de dimension $d$, $(M, g)$, les fonctions propres du Laplacien v\'erifient les estimations suivantes
 \begin{thm*}Pour tout $2\leq p \leq + \infty$, il existe $C>0$ tel que  pour toutes fonctions propres du laplacien, 
  $u$,  $-\mathbf{\Delta}_g u = \lambda ^2 u$, on a 
 \begin{equation}\label{eq.Lp}
 \|u\|_{L^p(M)}\leq C \lambda^{\delta (p)}\|u\|_{L^2(M)},
 \end{equation}
 avec 
 \begin{equation}
 \delta (p) = \begin{cases}
&\frac{ (d-1)} 2 - \frac d p \text{ si } p \geq \frac{ 2(d+1)} {d-1}\\
&\frac{ (d-1)} 2 \bigl( \frac 1 2 - \frac 1 p \bigr) \text{ si } p \leq \frac{ 2(d+1)} {d-1}.
\end{cases}
\end{equation}
\end{thm*}
On sait par ailleurs que ces estim\'ees sont optimales sur les sph\`eres (munies de leurs m\'etriques standart). Dans le premier r\'egime, les harmoniques sph\'eriques zonales (qui se concentrent en deux points diam\'etralement oppos\'es) r\'ealisent l'optimum tandis que dans le second, ce sont les harmoniques qui se concentrent sur un \'equateur qui saturent les estim\'ees~\eqref{eq.Lp}. Notre premi\`ere application (voir Th\'eor\`eme~\ref{thm4}) montre que si on choisit au hasard, pour la mesure de probabilit\'e naturelle (voir section~\ref{sec3}) une base Hilbertienne de $L^2( \mathbb{S}^d)$ form\'ee d'harmoniques sph\'eriques, alors avec probabilit\'e $1$, pour tout $p<+\infty$, toutes les normes $L^p$ sont born\'ees (uniform\'ement). Autrement dit, on peut prendre $\delta (p) =0$ dans~\eqref{eq.Lp} avec probabilit\'e $1$.  On remarquera que ce ph\'enom\`ene d'existence de familles de fonctions propres exhibant des comportements diff\'erents en ce qui concerne la croissance des normes $L^p$ n'est pas si surprenant puisqu'il se manifeste aussi sur les tores $\mathbb{T}^d$. En effet, dans ce cadre la situation est renvers\'ee puisque les fonctions propres naturelles ($e^{i n\cdot x}, n \in \mathbb{Z}^d$) ont toutes leurs normes $L^p$ born\'ees. Cependant, il est possible de d\'emontrer (voir la section~\ref{sec.3.2}) qu'il existe sur $\mathbb{T}^d$ une suite de fonctions propres du Laplacien $u_n$ v\'erifiant
\begin{equation}\label{eq.mino-tores}
 \| u_n \|_{L^p( \mathbb{T}^d)} \geq |\lambda_n| ^{\frac {d-2} 2 - \frac d p},
 \end{equation}
 et donc pour $d\geq 3$, $p \geq 2d/ (d-2)$, les normes $L^p$ ne sont pas uniform\'ement born\'ees (voir~\cite{Bo} pour des majorations sur $\mathbb{T}^d$) .
  
 Notre deuxi\`eme application (section~\ref{sec4}) concerne l'\'etude de l'\'equation des ondes amorties sur une vari\'et\'e compacte. On consid\`ere donc pour $a\in C^\infty(M; [0,+\infty[)$ les solutions de 
 $$ (\partial_t^2 - \mathbf{\Delta}) u + a(x) \partial _t u =0, \qquad (u, \partial_t u) \mid_{t=0} = (u_0, u_1) \in H^1(M) \times L^2(M).$$
 Leur \'energie 
 $$ \mathcal{E}(u) (t) = \frac 1 2\int_M (|\nabla_x u|^2 + |\partial_tu|^2 )dx $$ v\'erifie 
 $$ \frac {d \mathcal{E}(t)} {dt} = - \int_M a(x) |\partial_t u|^2 dx,$$ et est donc une fonction d\'ecroissante dont on peut d\'emontrer qu'elle tend vers $0$ quand $t$ tend vers l'infini d\`es que l'amortissement $a$ est non trivial. Si de plus il existe un taux de d\'ecroissance uniforme par rapport \`a l'\'energie initiale,  la propri\'et\'e de semi-groupe montre que ce taux est alors toujours exponentiel:
 $$\mathcal{E}(u)(t) \leq C e^{-ct} \mathcal{E}(u) (0).$$
 \begin{thm*}[Bardos-Lebeau-Rauch~\cite{BaLeRa92}]
 Il existe un taux de d\'ecroissance (exponentiel) uniforme si et seulement si toutes les g\'eod\'esiques de la vari\'et\'e $M$ rencontrent la r\'egion $a>0$.
 \end{thm*}
 Ici, on s'int\'eresse \`a des situations o\`u cette propri\'et\'e g\'eom\'etrique n'est plus v\'erifi\'ee, mais o\`u elle est viol\'ee pour \og un ensemble rare de g\'eod\'esiques\fg. Plus pr\'ecisement, si on appelle $\mathcal{E}$ l'ensemble des points de l'espace des phases tels que le long de la g\'eod\'esique issue de ce point, la moyenne asymptotique de l'amortissement est nulle, alors la mesure dans l'espace des phases de $\mathcal{E}$ est nulle. En particulier, si le flot est ergodique, cette propri\'et\'e est v\'erifi\'ee. Nous d\'emontrons alors que pour des mesures de probabilit\'es naturelles sur l'espace d'\'energie $H^1\times L^2$, il existe toujours un taux uniforme de d\'ecroissance, sur des ensembles de mesures arbitrairement proches de $1$. 
 
 Finalement, notre derni\`ere application (section~\ref{sec5}) concerne la th\'eorie de Cauchy pour l'\'equation des ondes semilin\'eaire sur une vari\'et\'e compacte de dimension $3$.
 \begin{equation}
(\partial_t^2 - \mathbf{\Delta} ) u + u^{p} =0 , (u\mid_{t=0}, \partial_t u \mid_{t=0}) = (u_0, u_1) \in (H^1(M)\cap L^{p+1}(M))\times L^2 (M),
\end{equation}
o\`u $p$ est un entier impair. On connait pour ce syst\`eme l'existence de solutions faibles globales en temps. De plus, pour $p\leq 5$, ces solutions sont fortes et uniques. Nous d\'emontrons que pour une famille de mesures de probabilit\'es naturelles sur l'espace $H^1(M) \times L^2(M)$, il existe pour presque toute donn\'ee initiale $(u_0, u_1)$ et  tout $p <+\infty$ une solution locale forte (en un sens qui sera pr\'ecis\'e) et que sur l'intervalle d'existence de ces solutions fortes, il y a unicit\'e des solutions faibles (i.e. toute solution faible co\"{\i}ncide avec cette solution forte).

Certains de nos r\'esultats restent vrais sur une vari\'et\'e \`a bord. Dans une derni\`ere section, nous donnons les \'el\'ements permettant dans ce cadre d'adapter les d\'emonstrations. 
Finalement, nous avons rassembl\'e dans un appendice quelques r\'esultats de calcul des probabilit\'es et de calcul pseudo-diff\'erentiel n\'ecessaires \`a la compr\'ehension de l'article.

Ce projet a b\'en\'efici\'e du soutien de l'Agence Nationale de
la Recherche, projet ANR-07-BLAN-0250

\section {Estimations probabilistes}\label{sec2}

Dans cette section, nous calculons les lois de certaines variables al\'eatoires associ\'ees 
\`a la th\'eorie de Littlewood-Paley sur la vari\'et\'e $M$. Les asymptotiques de Weyl jouent un r\^ole cl\'e dans ces calculs. Nos r\'esultats autorisent des localisations en fr\'equence plus fins et plus g\'en\'eraux que les localisations dyadiques de l'introduction. Plus pr\'ecisement, on consid\`erera  $0<a_h<b_h\leq c$ deux fonctions d\'efinies pour $h\in (0, h_0)$ telles que
\begin{equation}
\lim_{h\rightarrow 0} b_h= b \geq \lim_{h\rightarrow 0}a_h =a \geq 0.
\end{equation}
On supposera que si $a=b$, alors $a>0$ et
\begin{equation}\label{eq.mino}
b_h - a_ h \geq Dh
\end{equation}
pour une constante $D$ assez grande (\`a pr\'eciser ult\'erieurement)
On notera $E_{h}$ (resp $\widetilde{E}_{h}$) le sous espace de $L^2(M)$
\begin{equation}\label{2.1.1}
\begin{gathered}
E_{h}=\{u=\sum_{k\in I_{h}}z_ke_k(x), \  z_k\in \mathbb C\},  \quad \widetilde{E}_{h}=\{u=\sum_{k\in I_{h}}z_ke_k(x), \  z_k\in \mathbb R\},\\
 I_h= \{ k \in \mathbb{N}; h\omega_k \in ]a_h, b_h]\}.
\end{gathered}\end{equation}
Soit $N_h=dim(E_{h})$.
Rappelons que d'apr\`es la formule de Weyl, avec reste pr\'ecis\'e~\eqref{2.1bis} (voir H\"ormander \cite{H68}) , on a 
pour $h\in ]0,1]$, avec $c_d=\text{Vol} (x\in \mathbb R^d, \vert x\vert \leq 1)$ le volume de la boule unit\'e  en dimension $d$,
\begin{equation}\label{2.1.1bis}
\exists C>0; \forall \lambda >0
\Bigl| \sharp \{ k\in \mathbb{N}; \omega_k \leq \lambda\} - c_d\frac{ \text{ Vol }(M)}{(2\pi)^d}  \lambda ^d \Bigr|\leq \lambda^{d-1},
\end{equation}
et donc
\begin{equation}\label{eq.est0}
\Bigl| \sharp \{ k; \omega_k \in I_{h} \} -c_d\frac{ \text{ Vol }(M)}{(2\pi)^d}  \Bigl((h^{-1}b_h) ^{d}- (h^{-1}a_h) ^{d}\Bigr)\Bigr| \leq  C h^{-d+1},
\end{equation}
\begin{multline}\label{eq.mino1}
N_h = \sharp \{ k; \omega_k \in I_{h} \} \sim \\
\begin{cases} c_d\frac{ \text{ Vol }(M)}{(2\pi)^d} (b_h^d- a_h^d) h^{-d}, &\text{ si } 0\leq a <b,\\
c_d\frac{ \text{ Vol }(M)}{(2\pi)^d}d a^{d-1}h^{-d } \bigl[(b_h - a_h )+\mathcal{O} (h+(b_h -a_h )^2+(b_h - a_h)|a- a_h|)\bigr], &\text{ si } 0<a=b.
\end{cases}
\end{multline}
On en d\'eduit 
\begin{equation}\label{eq.mino2}
\begin{gathered}
\exists D_0>0; b_h - a_ h \geq D_0h \Rightarrow \exists \beta>\alpha >0; \\
 \alpha h^{-d}(b_h - a_h) \leq N_h = \sharp \{ k; \omega_k \in I_{h} \} \leq \beta h^{-d}(b_h - a_h) .
 \end{gathered}
\end{equation}
Nous supposerons dans la suite que la constante $D$ dans ~\eqref{eq.mino} est choisie de telle fa\c con que ~\eqref{eq.mino2} est v\'erifi\'ee, et qu'on ait aussi 
pour tout $h$ 
\be\label{minordim}
N_{h}\geq 2
\ee
On munit les sph\`eres unit\'e de $E_h$ (resp. $\widetilde{E}_h$) de la mesure de probabilit\'e uniforme, $P_h$ (resp. $\widetilde{P}_h$).  
\subsection{Estimations $L^\infty$ presque sures}\label{sec2.1}
Dans le cadre que nous venons de d\'evelopper, le th\'eor\`eme~\ref{thm0.1} est un cas particulier de
\begin{thm}\label{thm0.2}
Il existe $C>0,c_{2}>0$ tel que pour tout $h\in ]0,1]$ et tout $\Lambda\geq 1$ on ait, avec $c_1=d(1+d/2)$
\begin{equation}\label{0.11bis}\begin{aligned}
P_{h}\big(u\in S_{h}, \   \Vert u \Vert_{L^\infty} > \Lambda\big)
&\leq Ch^{-c_1}e^{-c_2\Lambda^2}\\
\widetilde{P}_{h}\big(u\in \widetilde{S}_{h}, \   \Vert u \Vert_{L^\infty} > \Lambda\big)
&\leq Ch^{-c_1}e^{-c_2\Lambda^2}.
\end{aligned}
\end{equation}
De plus, on peut choisir  $c_2\in ]0,\text{Vol}(M)[$ dans le cas 
$\lim_{h\rightarrow 0} (b_{h}-a_{h})/h=+\infty$.
\end{thm}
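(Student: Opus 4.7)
The plan is to reduce the $L^\infty$ bound to a pointwise estimate at a fixed $x\in M$ by a net argument, and to prove the pointwise estimate by combining the invariance of $P_h$ under the unitary group of $E_h$ with H\"ormander's pointwise Weyl law for the spectral function. I treat the complex case; the real case is identical, with Beta$(1/2,(N_h-1)/2)$ replacing Beta$(1,N_h-1)$ below (this only shifts the explicit constant by a factor of order one and does not affect the structure of the argument).

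\emph{Pointwise estimate.} Set $e(x)=(e_k(x))_{k\in I_h}\in\mathbb{C}^{N_h}$, so that $u(x)=\langle z,e(x)\rangle$ whenever $u=\sum z_k e_k\in S_h$. Since $P_h$ is invariant under unitary transformations of $E_h$, the law of $|u(x)|$ is that of $\|e(x)\|_{\ell^2}\cdot|Z_1|$, where $Z_1$ is the first coordinate of a uniform random point on the complex unit sphere of $\mathbb{C}^{N_h}$. As $|Z_1|^2$ follows a Beta$(1,N_h-1)$ law, the elementary bound $(1-t^2)^{N_h-1}\leq e^{-(N_h-1)t^2}$ yields
\begin{equation*}
P_h\bigl(|u(x)|>\Lambda\bigr) \leq \exp\!\left(-\frac{(N_h-1)\Lambda^2}{\|e(x)\|_{\ell^2}^2}\right).
\end{equation*}
Now H\"ormander's uniform pointwise Weyl asymptotic
\begin{equation*}
\sum_{\omega_k\leq\lambda}|e_k(x)|^2=\frac{c_d}{(2\pi)^d}\lambda^d+O(\lambda^{d-1}),
\end{equation*}
subtracted at $\lambda=b_h/h$ and $\lambda=a_h/h$ and compared with \eqref{eq.mino1}, gives $(N_h-1)/\|e(x)\|_{\ell^2}^2\to\text{Vol}(M)$ when $(b_h-a_h)/h\to+\infty$, and in general a positive lower bound via \eqref{eq.mino2}. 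This gives $P_h(|u(x)|>\Lambda)\leq e^{-c_2\Lambda^2}$ with $c_2$ as in the theorem.

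\emph{Discretization.} Applied to a $0$-order semiclassical cut-off absorbing the factor of $h$ in $h\nabla$ (legal because $u$ has frequencies in $h\omega_k\in(a_h,b_h]$), the estimate \eqref{0.8bis} yields $\|\nabla u\|_{L^\infty}\leq Ch^{-1-d/2}$ for every $u\in S_h$. Hence on any geodesic ball of radius $\delta=h^{1+d/2}$ the oscillation of $u$ is bounded by a constant $K$ independent of $h$ and of $u$. Choose a $\delta$-net $\{x_j\}_{1\leq j\leq N}$ of $M$ with $N\leq C\delta^{-d}=Ch^{-c_1}$; on the event $\{\|u\|_{L^\infty}>\Lambda\}$ at least one value $|u(x_j)|$ must exceed $\Lambda-K$, so the union bound and the pointwise estimate above give
\begin{equation*}
P_h\bigl(\|u\|_{L^\infty}>\Lambda\bigr) \leq Ch^{-c_1}\,e^{-c_2(\Lambda-K)^2},
\end{equation*}
which implies \eqref{0.11bis} after a routine re-adjustment of $c_2$ (arbitrarily small loss, still less than $\text{Vol}(M)$ in the thick regime) and a trivial treatment of the bounded $\Lambda$ range.

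The delicate point, and the main obstacle, is tracking the constant in Step~1 so that $c_2$ can be brought arbitrarily close to $\text{Vol}(M)$. The H\"ormander remainder $O(h^{-d+1})$ must be negligible compared to the main term $(b_h-a_h)h^{-d}$ in $\|e(x)\|_{\ell^2}^2$, which is exactly the hypothesis $(b_h-a_h)/h\to+\infty$. In the critical shell $b_h-a_h\asymp h$, both $N_h$ and $\|e(x)\|_{\ell^2}^2$ are of order $h^{-d+1}$ and their ratio is only controlled up to a multiplicative constant, explaining the weaker conclusion $c_2>0$ in that regime. The net argument is robust; its only role is to contribute the polynomial prefactor $h^{-c_1}$.
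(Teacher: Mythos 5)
Your proof is correct and follows essentially the same route as the paper's: the pointwise tail bound via unitary invariance of $P_h$ and the Beta law $(1-t^2)^{N_h-1}$, Hörmander's uniform pointwise Weyl asymptotic to control $\|e(x)\|_{\ell^2}^2$, and a net argument driven by the Bernstein-type gradient bound $\|\nabla u\|_{L^\infty}\leq Ch^{-1-d/2}$. The only cosmetic difference is in the net: you fix the mesh $\delta=h^{1+d/2}$ and absorb the resulting additive loss $\Lambda\mapsto\Lambda-K$ by a readjustment of $c_2$, whereas the paper scales the mesh by $\varepsilon\Lambda$ so the loss appears multiplicatively as $(1-\varepsilon)\Lambda$; both are valid, and the paper's variant makes the claim $c_2$ arbitrarily close to $\mathrm{Vol}(M)$ slightly more transparent.
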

On se limitera dans la preuve au cas complexe, le cas r\'eel \'etant similaire.
 Pour tout $x\in M$,  et tout $\lambda \in \mathbb{R}^+$, on note 
 \begin{equation}
 \begin{gathered}
  E_{x,\lambda}= \sum_{k; \omega_k \leq \lambda} |e_k (x)|^2, \qquad 
  e_{x,h}=E_{x,h^{-1}b_h} - E_{x,h^{-1}a_h}\\
  b_{x,h}= ( e_k(x))_{k\in I_{h}} \in \mathbb C^{N_h}.
 \end{gathered}
 \end{equation}
On a $e_{x,h}=\vert b_{x,h}\vert^2$. Soit $ev_x$ la variable al\'eatoire sur $(S_{h},P_{h})$
\begin{equation}\label{2.2}
ev_x(u)=u(x)=\sum_{k\in I_{h}}a_ke_k(x)=(a\vert b_{x,h})=(a\vert {b_{x,h}
\over \vert b_{x,h}\vert})
\vert b_{x,h}\vert.
\end{equation}
Le vecteur $\varepsilon_x={b_{x,h}\over \vert b_{x,h}\vert}$ est de norme $1$ et on a
pour tout $r\geq 0$, avec $P=P_{h}$

\begin{equation}\label{2.3}
P\Big( \vert ev_x\vert >r\Big)=
P\Big( \vert (a\vert \varepsilon_x)\vert >r/{\vert b_{x,h}\vert}\Big)=\Phi(r/{\vert b_{x,h}\vert}),
\end{equation}
avec $\Phi(t)=P( \vert (a\vert \varepsilon)\vert >t)$ o\`u $\varepsilon$
est un vecteur unitaire quelconque.
D'apr\`es \eqref{3.3bis} (on identifie ici la sph\`ere unit\'e de $\mathbb{C}^{N_h}$ avec celle de $\mathbb{R}^{2N_h}$), on a
\begin{equation}\label{2.4}
\Phi(t)={\bf 1}_{t\in [0,1[}(1-t^2)^{N_h-1}.
\end{equation}

\begin{lem}\label{lem2.1} Il existe  $C_0>0$ 
tel que pour tout $x\in M$ et tout $h\in ]0,1]$ on a
\begin{equation}\label{2.5}
\vert e_{x,h}-{N_{h}\over \text{Vol}(M)}\vert \leq C_{0}h^{-d+1}
\end{equation}
En particulier, si $a_h, b_h$ v\'erifient 
$b_{h}-a_{h}\geq Dh$ avec $D$ assez grand, d'apr\`es (\ref{eq.mino1}) et (\ref{eq.mino2}), on a avec $C$
ind\'ependant de $x\in M$ et $h\in ]0,1]$ 
\be\label{2.5bis}
N_{h}/C \leq e_{x,h}\leq CN_{h}.
\ee
\end{lem}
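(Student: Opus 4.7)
La formule \eqref{2.1.1bis} citée plus haut n'est que la version intégrée sur $M$ de la loi de Weyl ponctuelle de Hörmander \cite{H68}: pour le noyau spectral diagonal du projecteur $\mathbf{1}_{\sqrt{-\mathbf{\Delta}}\leq \lambda}$, on dispose d'un développement uniforme en $x\in M$,
\begin{equation*}
E_{x,\lambda}=\sum_{\omega_k\leq \lambda}|e_k(x)|^2 = \frac{c_d}{(2\pi)^d}\lambda^d + R(x,\lambda),\qquad |R(x,\lambda)|\leq C_0'\lambda^{d-1},
\end{equation*}
avec $C_0'$ indépendant de $x\in M$. C'est le résultat central à invoquer ; sa preuve, que je ne referais pas, repose sur l'analyse du noyau de l'opérateur $\cos(t\sqrt{-\mathbf{\Delta}})$ pour $|t|$ petit par propagation des singularités et phase stationnaire. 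Noter que l'intégration sur $M$ redonne bien \eqref{2.1.1bis}.

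L'idée est alors de soustraire ce développement aux deux bornes $\lambda=h^{-1}b_h$ et $\lambda=h^{-1}a_h$. On obtient
\begin{equation*}
e_{x,h}=E_{x,h^{-1}b_h}-E_{x,h^{-1}a_h}=\frac{c_d}{(2\pi)^d}\bigl((h^{-1}b_h)^d-(h^{-1}a_h)^d\bigr) + O(h^{-d+1}),
\end{equation*}
la constante implicite étant uniforme en $x\in M$ et $h\in(0,1]$ car $a_h,b_h\leq c$ sont bornés. En divisant par $\text{Vol}(M)$ la version intégrée \eqref{eq.est0} de cette même asymptotique, on a aussi
\begin{equation*}
\frac{N_h}{\text{Vol}(M)}=\frac{c_d}{(2\pi)^d}\bigl((h^{-1}b_h)^d-(h^{-1}a_h)^d\bigr)+O(h^{-d+1}).
\end{equation*}
La comparaison des deux lignes fournit \eqref{2.5} avec une constante $C_0$ indépendante de $x$ et $h$.

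Pour l'encadrement \eqref{2.5bis}, on combine \eqref{2.5} avec la minoration \eqref{eq.mino2}: sous l'hypothèse $b_h-a_h\geq Dh$, on a $N_h\geq \alpha h^{-d}(b_h-a_h)\geq \alpha D\, h^{-d+1}$. En choisissant $D$ suffisamment grand pour que $\alpha D/\text{Vol}(M)$ domine largement $C_0$ (disons $\alpha D\geq 2 C_0\,\text{Vol}(M)$), le reste $C_0 h^{-d+1}$ est absorbé par $N_h/\text{Vol}(M)$, ce qui donne aussitôt $N_h/C\leq e_{x,h}\leq C N_h$ pour un $C$ convenable. L'unique difficulté sérieuse est donc le rappel de la loi de Weyl ponctuelle avec reste uniforme, tout le reste n'étant qu'arithmétique élémentaire.
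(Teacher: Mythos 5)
Votre démonstration suit exactement la même stratégie que celle du papier : invoquer la loi de Weyl ponctuelle de Hörmander avec reste uniforme $|E_{x,\lambda}-c_d\lambda^d/(2\pi)^d|\leq C\lambda^{d-1}$, soustraire aux deux bornes $\lambda=h^{-1}b_h$ et $\lambda=h^{-1}a_h$, puis comparer avec la version intégrée \eqref{eq.est0}. Vous explicitez même le choix de $D$ pour \eqref{2.5bis}, ce que le papier laisse implicite ; tout est correct.
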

\begin{proof}D'apr\`es~\cite[Th\'eor\`eme 1.1]{H68}, on a avec $C$ ind\'ependant de 
$x$ et de $\lambda$
$$ \Bigl |E_{x,\lambda}-\frac{ c_{d}\lambda^d} {(2\pi)^d} \Bigr| \leq C \lambda^{d-1},
$$ et donc~\eqref{2.5} est cons\'equence de~\eqref{eq.est0}. 
 \end{proof}
On en d\'eduit en particulier le lemme suivant 
 \begin{lem}\label{lem.est}
Il existe $c_{2}>0$ tel que pour tout $x\in M$ et tout $\lambda >0$, 
\begin{equation}\label{eq.point}
P_h ( u\in S_h; | u(x)| >\lambda ) \leq e^{-c_{2} \lambda^2}.
\end{equation} 
\end{lem}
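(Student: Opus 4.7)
The strategy is to combine the exact formula for the distribution of $ev_x$ derived in \eqref{2.3}--\eqref{2.4} with the two-sided bound on $e_{x,h} = |b_{x,h}|^2$ provided by \eqref{2.5bis}, and then use the elementary inequality $1-t \leq e^{-t}$.

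First I would start from the identity, valid for every $r \geq 0$,
\begin{equation*}
P_h\bigl(u \in S_h;\ |u(x)| > r\bigr) \;=\; \Phi\!\left(\frac{r}{|b_{x,h}|}\right) \;=\; \mathbf{1}_{[0,|b_{x,h}|[}(r)\,\Bigl(1 - \frac{r^2}{|b_{x,h}|^2}\Bigr)^{N_h-1},
\end{equation*}
which was established in \eqref{2.3} and \eqref{2.4}. When $r \geq |b_{x,h}|$ the right-hand side vanishes and the claimed bound is trivial, so the real content concerns the regime $r < |b_{x,h}|$.

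In that regime I would apply the standard estimate $(1-t)^{N_h-1} \leq e^{-(N_h-1)t}$, valid for $t \in [0,1]$, with $t = r^2/|b_{x,h}|^2$, to obtain
\begin{equation*}
P_h\bigl(u \in S_h;\ |u(x)| > r\bigr) \;\leq\; \exp\!\left(-\frac{(N_h-1)\,r^2}{|b_{x,h}|^2}\right).
\end{equation*}
At this point I would use Lemma \ref{lem2.1}, specifically \eqref{2.5bis}, which gives $|b_{x,h}|^2 = e_{x,h} \leq C N_h$ uniformly in $x \in M$ and $h \in ]0,1]$, together with the hypothesis \eqref{minordim} that $N_h \geq 2$, so that $(N_h-1)/N_h \geq 1/2$. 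Combining these,
\begin{equation*}
\frac{(N_h-1)\,r^2}{|b_{x,h}|^2} \;\geq\; \frac{(N_h-1)\,r^2}{C\, N_h} \;\geq\; \frac{r^2}{2C},
\end{equation*}
and setting $c_2 = 1/(2C)$ yields \eqref{eq.point}.

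I do not foresee any real obstacle here: the work has essentially been done in Lemma \ref{lem2.1}, which uses Hörmander's pointwise Weyl law \cite{H68} to control $e_{x,h}$ uniformly in $x$, and in the preceding explicit computation of $\Phi$ via the uniform measure on the sphere. The only point deserving care is the uniformity of the constant $C$ in \eqref{2.5bis} with respect to both $x$ and $h$, and the fact that, thanks to the lower bound \eqref{eq.mino2} on $N_h$ guaranteed by the assumption $b_h - a_h \geq Dh$ with $D$ large, one indeed gets $|b_{x,h}|^2 \lesssim N_h$ rather than a worse bound; but this is exactly what Lemma \ref{lem2.1} delivers.
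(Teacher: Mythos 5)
Your proposal is correct and follows essentially the same route as the paper's own proof: start from the exact tail formula in \eqref{2.3}--\eqref{2.4}, bound $(1-t)^{N_h-1}\leq e^{-(N_h-1)t}$, and invoke Lemma \ref{lem2.1} (via \eqref{2.5bis}) to control $e_{x,h}=|b_{x,h}|^2$ by $CN_h$ uniformly in $x$ and $h$. You merely make explicit the small step (using $N_h\geq 2$ so that $(N_h-1)/N_h\geq 1/2$) that the paper leaves implicit when it says the lemma follows from Lemma \ref{lem2.1}.
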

On effet, d'apr\`es~\eqref{2.3} et~\eqref{2.4}, 
\begin{equation}
P_h ( u\in S_h; | u(x)| >\lambda ) \leq (1- \frac{\lambda^2} {|b_{x,h}|^2} )^{N_h-1}
\leq e^{-\frac{N_h-1 } {|b_{x,h}|^2} \lambda^2}=e^{-\frac{N_h-1 } {e_{x,h}} \lambda^2}
\end{equation} 
et le lemme~\ref{lem.est} s'en d\'eduit d'apr\`es le lemme~\ref{lem2.1}. 

\bigskip

{\bf Preuve du th\'eor\`eme \ref{thm0.2}}. On se limite encore ici au cas complexe (le cas r\'eel \'etant similaire).
Le th\'eor\`eme \ref{thm0.2} est une cons\'equence  des formules 
\eqref{2.3}, \eqref{2.4}
et de l'estimation \eqref{2.5}.   
Remarquons qu'il existe $0<c<C$ et $C_1>0$ 
tels que pour tout $h\in ]0,1]$ et tout $u\in S_{h}$
 on a $c \leq \Vert u \Vert_{L^\infty}\leq Ch^{-d/2}$ et
$$\Vert \nabla_x u \Vert_{L^\infty} \leq C_1h^{-(d/2+1)}.$$
Il en résulte
\begin{equation}\label{4.4}
sup_x \  \vert u(x)\vert  \leq
sup_{\alpha\in A} \  \vert u(x_\alpha)\vert
+  \varepsilon \Lambda 
\end{equation}  
dès que $x_\alpha, \alpha\in A$ est un réseau de points de $M$ de maille plus petite
que $\varepsilon \Lambda h^{d/2+1}/C_{1}$. 
D'apr\`es \eqref{2.3}, \eqref{2.4}, 
le lemme (\ref{lem2.1}), et
$(1-t^2)^{N_h}\leq e^{-{N_h}t^2}$ pour $t\in [0,1]$, il existe $h_0>0$ tel que pour 
 $h\in ]0,h_0]$ on ait
\begin{equation}\label{4.5}
\ba
P\big( u\in S_h, \  \Vert u \Vert_{L^\infty}>  \Lambda\big)& 
\leq \sum_{\alpha\in A}
P\Big(  \vert ev_{x_\alpha}\vert > (1-\varepsilon)\Lambda\Big)\\
&\leq \sum_{\alpha\in A}
{\bf 1}_{(1-\varepsilon)\Lambda\leq \vert b_{x_{\alpha},h}\vert}
(1-(1-\varepsilon)^2\Lambda^2/{\vert b_{x_{\alpha},h}\vert^2})^{N_h-1}\\
&\leq \sum_{\alpha\in A}e^{-(N_h-1)
(1-\varepsilon)^2\Lambda^2/{\vert b_{x_{\alpha},h}\vert^2}}\\
&\leq card(A)e^{-Vol(M)(1-\varepsilon)^2\Lambda^2 {N_{h}-1\over N_{h}+C_{0}Vol(M)h^{-d+1}}}.
\ea
\end{equation}
On a $\liminf_{h\rightarrow 0}{N_{h}-1\over N_{h}+C_{0}Vol(M)h^{-d+1}}>0$ d'apr\`es
(\ref{eq.mino2}), et $\lim_{h\rightarrow 0}{N_{h}-1\over N_{h}+C_{0}Vol(M)h^{-d+1}}=1$
d'apr\`es (\ref{eq.mino1}) dans le cas $\lim_{h\rightarrow 0}(b_{h}-a_{h})/h=+\infty$.
Comme on a $\sharp(A)\leq C\varepsilon^{-d}\Lambda^{-d}h^{-d(d/2+1)}$,
\eqref{0.11bis} résulte de \eqref{4.5}.
La preuve du th\'eor\`eme (\ref{thm0.2}) est complète.
\subsection{Mesures de d\'efaut presque sures}\label{sec2.2}
On se place dans cette section sous l'hypoth\`ese
\begin{equation}\label{eq.hyp}
\lim_{h\rightarrow 0} \frac{ b_h - a_h} h = + \infty,
\end{equation}
ce qui exclut le cas critique $b_h - a_h \sim Dh$.
On a alors d'apr\`es (\ref{eq.mino1})
$$\lim_{h\rightarrow 0}{h^{-d+1}\over N_{h}} =0 $$
\begin{lem}\label{lem2.2.1}
Soit $A(x,hD)$ un op\'erateur h-pseudodiff\'erentiel classique sur $M$
de degr\'e $0$ et de symbole principal $a(x,\xi)$.
Il existe $C_{0}>0$  tel que 
\begin{equation}\label{2.2.1}
\vert \mathbb E_{h}\Big( (A(x,hD)u\vert u) \Big)-
{\int_{\vert \xi\vert_x\in I} a(x,\xi)d\lambda\over 
\int_{\vert \xi\vert_x\in I}d\lambda}\vert \leq C_{0}{h^{-d+1}\over N_{h}}
\end{equation} 
o\`u $d\lambda=dxd\xi$ est la mesure de Liouville, $I= ]a,b[$ si $a<b$, et si $a=b >0$ on a not\'e 
$${\int_{\vert \xi\vert_x\in I} a(x,\xi)d\lambda\over 
\int_{\vert \xi\vert_x\in I}d\lambda}= \lim_{\epsilon \rightarrow 0} {\int_{\vert \xi\vert_x\in ]a-\epsilon,a+\epsilon[} a(x,\xi)d\lambda\over 
\int_{\vert \xi\vert_x\in ]a-\epsilon,a+\epsilon[}d\lambda}.
$$
\end{lem}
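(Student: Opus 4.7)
Le point de départ est l'invariance par rotation de $P_h$ sur la sphère unité de $E_h\simeq \mathbb{C}^{N_h}$. En écrivant $u=\sum_{k\in I_h}z_k e_k$, l'isotropie de la mesure uniforme fournit l'identité exacte $\mathbb{E}_h[z_j\overline{z_k}]=\delta_{jk}/N_h$, d'où
$$\mathbb{E}_h\bigl[(A(x,hD)u\mid u)\bigr] \;=\; \frac{1}{N_h}\sum_{k\in I_h}(Ae_k\mid e_k) \;=\; \frac{\text{Tr}(\Pi_h A)}{N_h}.$$
Le problème est donc ramené au calcul asymptotique de $\text{Tr}(\Pi_h A)$ et de $N_h=\text{Tr}(\Pi_h)$.

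Le coeur de la preuve consiste à établir la formule de Weyl semiclassique précisée
$$\text{Tr}(\Pi_h A) \;=\; \frac{1}{(2\pi h)^d}\int_{|\xi|_x\in(a_h,b_h]} a(x,\xi)\, d\lambda \;+\; O(h^{-d+1}),$$
uniformément en $h\in \,]0,h_0]$. Pour cela, on choisit $\chi\in C_c^\infty(\mathbb{R})$ égal à $1$ sur un voisinage de $[a,b]$ contenant tous les $(a_h,b_h]$; comme $\Pi_h\,\chi(h\sqrt{-\mathbf{\Delta}})=\Pi_h$, on a $\text{Tr}(\Pi_h A)=\text{Tr}(\Pi_h\widetilde A)$, où $\widetilde A=\chi(h\sqrt{-\mathbf{\Delta}})A$ est un h-pseudodifférentiel à symbole principal $a(x,\xi)\chi(|\xi|_x)$, compactement supporté en $\xi$. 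On applique alors la loi de Weyl locale précisée de Hörmander \cite{H68} (avec reste $O(\lambda^{d-1})$) à la fonction de comptage généralisée $\lambda\mapsto\sum_{\omega_k\leq\lambda}(\widetilde A e_k\mid e_k)$ aux deux bornes $\lambda=a_h/h$ et $\lambda=b_h/h$; la différence des deux expressions, après changement de variable $\xi\mapsto h\xi$, donne la formule annoncée (les termes sous-principaux de $A$ contribuent eux aussi $O(h^{1-d})$ et se laissent absorber dans le reste). Le cas particulier $A=\mathrm{Id}$ n'est autre que \eqref{eq.est0} et fournit l'expansion analogue pour $N_h$.

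En prenant le quotient des deux développements et en utilisant la borne uniforme $\|a\|_\infty<\infty$, on obtient
$$\mathbb{E}_h\bigl[(Au\mid u)\bigr] - \frac{\int_{|\xi|_x\in(a_h,b_h]} a\, d\lambda}{\int_{|\xi|_x\in(a_h,b_h]} d\lambda} \;=\; O\!\bigl(h^{-d+1}/N_h\bigr).$$
Il reste à identifier la moyenne sur la couronne $\{|\xi|_x\in(a_h,b_h]\}$ avec celle figurant dans l'énoncé: dans le cas $a<b$, la différence symétrique entre cette couronne et $\{|\xi|_x\in(a,b)\}$ a mesure de Liouville $O(|a_h-a|+|b_h-b|)=o(1)$, ce qui permet de passer à la limite; dans le cas $a=b>0$, la régularité de $a(x,\xi)$ et la désintégration de $d\lambda$ selon $|\xi|_x$ montrent que la moyenne sur la couronne mince converge vers la moyenne sur la cosphère $|\xi|_x=a$, ce qui est précisément l'expression $\lim_{\varepsilon\to 0}$ du lemme. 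Dans les deux cas, l'hypothèse $(b_h-a_h)/h\to\infty$ garantit que $h^{-d+1}/N_h=O(h/(b_h-a_h))\to 0$ absorbe ces erreurs supplémentaires.

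L'obstacle principal se situe au deuxième pas: le gain d'un cran dans le reste ($O(h^{-d+1})$ plutôt que $O(h^{-d})$) ne peut être obtenu par simple calcul fonctionnel lisse, car $\Pi_h$ coupe brutalement au bord de $(a_h,b_h]$ et les semi-normes de symbole d'une approximation lisse explosent. Ce gain provient de la méthode de parametrix hyperbolique de Hörmander pour $e^{it\sqrt{-\mathbf{\Delta}}}$ en temps petit, couplée à un argument taubérien, que nous utilisons ici comme boîte noire.
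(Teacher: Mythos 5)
Votre démarche est, pour l'essentiel, celle du papier : réduction à $\mathbb E_h[(Au\mid u)]=\text{Tr}(\Pi_h A)/N_h$ par isotropie de $P_h$, application de la loi de Weyl de H\"ormander avec reste $O(\lambda^{d-1})$ aux deux bornes $\lambda=a_h/h$ et $\lambda=b_h/h$, puis différence et changement de variable $\xi\mapsto h\xi$. L'ajout de la troncature $\chi(h\sqrt{-\mathbf{\Delta}})$ pour travailler avec un symbole compactement supporté est une précaution raisonnable; le papier se contente de remarquer (via l'appendice B, formule (\ref{5.2.1})) que $A(x,hD)$ est, uniformément en $h$, un opérateur pseudodifférentiel standard d'ordre $0$, et invoque directement~\cite[Th\'eor\`eme 29.1.7]{Ho85-1} plutôt que~\cite{H68} pour la formule de trace avec opérateur.

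La véritable lacune est dans le dernier pas. Vous établissez $\bigl|\mathbb E_h[(Au\mid u)]-m_{a,h}\bigr|\leq C h^{-d+1}/N_h$ avec $m_{a,h}$ la moyenne sur la couronne $\{|\xi|_x\in(a_h,b_h]\}$, et il faut encore montrer $|m_{a,h}-m_a|\leq C_0 h^{-d+1}/N_h$ pour obtenir la conclusion du lemme. Vous affirmez seulement que cette erreur est $o(1)$, puis que l'hypothèse $(b_h-a_h)/h\to\infty$, qui donne $h^{-d+1}/N_h=O(h/(b_h-a_h))\to 0$, \og absorbe\fg{} cette erreur : c'est un raisonnement inversé. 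Que la borne cible tende vers $0$ rend la conclusion \emph{plus} difficile, et un $o(1)$ sans taux ne peut pas être comparé à un $O(h/(b_h-a_h))$. Le papier conclut en affirmant $|m_a - m_{a,h}|\leq Ch$, puis utilise $N_h\leq C h^{-d}$ pour en déduire $Ch\leq C' h^{-d+1}/N_h$; c'est cette estimation quantitative (qui présuppose implicitement une vitesse de convergence $O(h)$ de $a_h,b_h$) qui fait marcher la preuve, et c'est elle qu'il faudrait démontrer ou invoquer.
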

 \begin{proof} Pour tout op\'erateur lin\'eaire $A$ sur $E_{h}$ on a 
$$\mathbb E_{h}\Big( (A u\vert u) \Big)= {tr(A)\over N_{h}}$$
et il suffit donc de v\'erifier qu'on a\begin{equation}\label{2.2.2}
\vert \frac{ tr(\Pi_{h}A(x,hD)\Pi_{h})}{N_h}-m_{a}\vert \leq C_{0}{h^{-d+1}\over N_{h}}
\end{equation}
o\`u on a not\'e
\be\label{defma}
m_{a}={\int_{\vert \xi\vert_x\in I} a(x,\xi)d\lambda\over 
\int_{\vert \xi\vert_x\in I}d\lambda}
\ee

Ce r\'esultat est cons\'equence de travaux de Guillemin~\cite{Gu}. Ici, nous utiliserons la preuve donn\'ee par H\"ormander. On a d'apr\`es la preuve de~\cite[Theorem 29.1.7]{Ho85-1} (voir les pages 259-260)
\begin{prop}
Notons 
$E_\lambda= 1_{\sqrt{- \Delta} <\lambda}$ le projecteur spectral. Alors pour tout op\'erateur pseudodiff\'erentiel d'ordre $0$, $B$, de symbol principal $b(x,\xi)$, on a
\begin{equation}\label{eq.trtace}
 \text{Tr} ( E_\lambda B E_\lambda) = \iint_{|\xi|_x <\lambda} b(x, \xi) dx d\xi + O(\lambda^{d-1}).
\end{equation}
\end{prop}
On remarque ensuite que dans ~\eqref{eq.trace}, d'apr\`es la cyclicit\'e de la trace, on peut remplacer $E_\lambda B E_\lambda$ par $E_\lambda^2 B= E_\lambda B$. Par ailleurs,
(voir appendice \ref{sec5.2}, en particulier la formule (\ref{5.2.1})), $B=A(x,hD)$ est, uniform\'ement en $h\in ]0,1]$, un op\'erateur pseudodiff\'erentiel d'ordre $0$ au sens de la 
proposition pr\'ec\'edente. De plus, on peut r\'eecrire (\ref{eq.est0}) sous la forme
$$ N_{h}={h^{-d}\over (2\pi)^d} \iint_{a_{h}<|\xi|_x <b_{h}} dxd\xi +O(h^{-d+1}) $$
On a alors
\begin{multline}\label{eq.trace}
 \tr(\Pi_{h}A(x,hD)\Pi_{h})= \tr(E_{h^{-1} b_h}A(x, hD)) - \tr(E_{h^{-1} a_h}A(x, hD))\\
 = \frac 1{ (2\pi)^d}\iint_{h^{-1} a_h <|\xi|_x <h^{-1} b_h} a(x, h\xi) dx d\xi + O(h^{1-d})\\
 = \frac{ h^{-d} } {(2\pi)^d} \iint_{a_{h}<|\xi|_x <b_{h} } a(x, \xi) dx d\xi + O(h^{1-d})\\
 ={  \iint_{a_{h}<|\xi|_x <b_{h} } a(x, \xi) dx d\xi\over 
  \iint_{a_{h}<|\xi|_x <b_{h} }  dx d\xi}(N_{h}+O(h^{-d+1})) + O(h^{-d+1})
 \end{multline}
 On a aussi
 $$ \vert m_{a} - {  \iint_{a_{h}<|\xi|_x <b_{h} } a(x, \xi) dx d\xi\over 
  \iint_{a_{h}<|\xi|_x <b_{h} }  dx d\xi}\vert \leq Ch $$
ce qui, avec (\ref{eq.trace}) et $N_{h}\leq Ch^{-d}$ implique~\eqref{2.2.2}. La preuve du lemme \ref{lem2.2.1} est compl\`ete.
\end{proof}

Soit $(h_k)_{k\geq 0}$ une suite de limite nulle et $X$ l'espace topologique
produit $X=\Pi_k S_{h_k}$. On  munit $X$ de la probabilit\'e produit
$\mathbb P=\Pi_k P_{h_k}$.

\begin{thm}\label{thm2.1}
On suppose qu'il existe $L>0$ tel que $\sum_{k}h_{k}^L<\infty$ et \\
$\lim_{k\rightarrow \infty} N_{h_{k}}/\vert log(h_{k})\vert=\infty$ 
(cette derni\`ere hypoth\`ese est toujours satisfaite en dimension $d\geq 2$)
.
Alors on a, avec $m_{\sigma_{0}(A)}$ d\'efini par (\ref{defma}),
\begin{equation}\label{2.2.10}
\mathbb P\Big(\forall A \in \mathcal E^0_h, \ \lim_{k\rightarrow \infty} (A(x,h_kD)u_k\vert u_k)= m_{\sigma_{0}(A)}
\Big)=1
\end{equation}
\end{thm}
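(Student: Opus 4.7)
L'approche propos\'ee combine le Lemme~\ref{lem2.2.1} (calcul de l'esp\'erance) avec une in\'egalit\'e de concentration sur la sph\`ere, puis un argument de densit\'e en $A$. Fixons d'abord $A\in\mathcal E^0_h$ et posons $F_k(u)=(A(x,h_kD)u|u)$, variable al\'eatoire sur $(S_{h_k},P_{h_k})$. Sur la sph\`ere unit\'e, $F_k$ est Lipschitzienne de constante $\leq 2\|A\|_{L^2\to L^2}$, norme born\'ee uniform\'ement en $h_k$ puisque $A$ est un op\'erateur $h$-pseudodiff\'erentiel classique d'ordre~$0$. L'in\'egalit\'e de concentration de L\'evy sur la sph\`ere unit\'e de dimension r\'eelle $2N_{h_k}$ donne
\[
P_{h_k}\Bigl(|F_k(u)-\mathbb E_{h_k}(F_k)|>\lambda\Bigr)\leq Ce^{-cN_{h_k}\lambda^2},
\]
pour des constantes $c,C>0$ ne d\'ependant que de $\|A\|_{L^2\to L^2}$.

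L'hypoth\`ese $N_{h_k}/|\log h_k|\to+\infty$ entra\^\i ne que pour tout $\lambda>0$ et tout $L>0$ fix\'es, on a $Ce^{-cN_{h_k}\lambda^2}\leq h_k^L$ pour $k$ assez grand ; combin\'e \`a $\sum_k h_k^L<\infty$, le lemme de Borel--Cantelli donne presque s\^urement $F_k(u_k)-\mathbb E_{h_k}(F_k)\to 0$. Par ailleurs, l'hypoth\`ese~(\ref{eq.hyp}) de la section implique $h_k^{-d+1}/N_{h_k}\to 0$, et le Lemme~\ref{lem2.2.1} fournit donc $\mathbb E_{h_k}(F_k)\to m_{\sigma_0(A)}$. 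Ainsi, pour chaque $A$ fix\'e, $(A(x,h_kD)u_k|u_k)\to m_{\sigma_0(A)}$ presque s\^urement. Pour obtenir la conclusion simultan\'ement pour tout $A\in\mathcal E^0_h$, on introduit une famille d\'enombrable $(A_j)_{j\geq 0}$ dense dans $\mathcal E^0_h$ pour une topologie contr\^olant \`a la fois la norme d'op\'erateur $L^2\to L^2$ et le symbole principal (de support essentiel uniform\'ement compact dans $T^*M$) ; l'intersection d\'enombrable des \'ev\'enements de probabilit\'e~$1$ associ\'es aux $A_j$ reste de probabilit\'e~$1$, et sur celle-ci un argument d'approximation en $3\varepsilon$ utilisant $\bigl|\bigl((A-A_j)u|u\bigr)\bigr|\leq\|A-A_j\|_{L^2\to L^2}$ et la continuit\'e de $A\mapsto m_{\sigma_0(A)}$ conclut.

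L'obstacle principal est pr\'ecis\'ement cette derni\`ere \'etape de densit\'e : il faut choisir une topologie s\'eparable sur $\mathcal E^0_h$ rendant continues les fonctionnelles $A\mapsto(Au|u)$ (uniform\'ement en $k$ et $u\in S_{h_k}$) et $A\mapsto m_{\sigma_0(A)}$. On s'attend \`a travailler modulo des op\'erateurs de symbole principal nul et \`a exploiter la s\'eparabilit\'e de $C^\infty(K)$ sur un compact fixe $K\subset T^*M$ dict\'e par la localisation spectrale $h\omega_k\in]a_h,b_h]$.
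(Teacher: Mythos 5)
Votre d\'emarche est essentiellement celle du papier : concentration de L\'evy sur la sph\`ere $S_{h_k}$ pour la variable $F_k(u)=(A(x,h_kD)u|u)$, le Lemme~\ref{lem2.2.1} pour identifier la limite de l'esp\'erance, puis sommabilit\'e (Borel--Cantelli chez vous, minoration du produit infini dans le papier, ce qui revient au m\^eme), et enfin r\'eduction \`a une famille d\'enombrable dense d'op\'erateurs --- famille dont le papier ne fait d'ailleurs pas plus que d'affirmer l'existence, donc le point que vous soulignez comme \og obstacle\fg{} n'en est pas r\'eellement un par rapport \`a la preuve originale. Une seule petite imprécision : L\'evy donne la concentration autour de la \emph{m\'ediane} et non de l'esp\'erance ; le papier passe explicitement par les estimations \eqref{2.2.12}--\eqref{2.2.12bis} pour montrer que l'\'ecart m\'ediane--esp\'erance est $O(N_h^{-1/2})$ et en d\'eduire la concentration autour de l'esp\'erance, point que vous invoquez implicitement mais qu'il faut mentionner pour que votre in\'egalit\'e soit exacte telle quelle.
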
 

\begin{remk}
Les hypoth\`eses sur la suite $h_{k}$ 
sont  \'evidemment satisfaites dans le cadre 
de la th\'eorie de Littlewood-Paley pour laquelle on a $h_k=2^{-k}$.
Le th\'eor\`eme \ref{thm2.1} dit que pour presque toute suite
$k\rightarrow u_k\in S_{h_k}$, la mesure de d\'efaut semi-classique de la suite  existe (i.e on n'a pas besoin d'extraire une sous suite) et est
toujours \'egale \`a la mesure de Liouville normalis\'ee sur la couronne
$\{(x,\xi)\in T^*M, \vert\xi\vert_x\in I\}$. 
\end{remk}

\begin{proof}Comme il existe une partie d\'enombrable $\mathcal D$ de $\mathcal E^0_h$
telle que pour tout $A\in \mathcal E^0_h$ et tout $\varepsilon>0$, il existe
$B\in \mathcal D$ tel que 
$$\limsup_{h\rightarrow 0}\Vert A(x,hD)-B(x,hD)\Vert_{L^2}+
\sup_{\vert \xi\vert_x\in I}\vert \sigma_0(A)(x,\xi)-\sigma_0(B)(x,\xi)\vert\leq \varepsilon$$ 
il suffit de prouver qu'on a pour tout 
$A\in \mathcal E^0_h$
\begin{equation}\label{2.2.11}
\mathbb P\Big(\lim_{k\rightarrow \infty} (A(x,h_kD)u_k\vert u_k)=m_{\sigma_{0}(A)}
\Big)=1
\end{equation}
En \'ecrivant  $A=A_1+iA_2$ avec $A_i$ auto-adjoint, on a 
$\sigma_0(A)=\sigma_0(A_1)+i\sigma_0(A_2)$ et on peut donc aussi supposer
que $A$ est auto-adjoint.
Notons $f_h$ la variable al\'eatoire r\'eelle sur $S_{h}$, $f_h(u)=(A(x,hD)u\vert u)$, $dp_h$ sa loi, qui est \`a support dans $[-a,a]$ avec 
$a=\sup_h \Vert A(x,hD)\Vert_{L^2}<\infty$. Soit $\mathcal M_h$ la m\'ediane de $f_h$.
La fonction $f_h$ est lipschitzienne
de constante de lipschitz $\leq 2\sup_h\Vert A(x,hD) \Vert_{L^2}$. D'apr\`es le th\'eor\`eme
de concentration de la mesure de P. Levy (voir la proposition  \ref{concentration} de l'appendice A. On remarquera que la sphere complexe $S_{h}$ s'identifie avec la sph\`ere r\'eelle $S^{2N_{h}-1}$, et que les constantes sont donc coh\'erentes avec~\eqref{eq.conc}) 
\begin{equation}\label{2.2.12}
P_{h}(u\in S_{h}, \ \vert f_h(u)-\mathcal M_h\vert\geq r)\leq 2\exp(-{(N_{h}-1)r^2
\over \Vert f_h\Vert^2_{lips}})
\end{equation}
On a
$\mathbb E (f_{h})=\mathcal M_h+\int_{-a}^{a}(x-\mathcal M_h)dp_h$, donc d'apr\`es \eqref{2.2.12}
\begin{equation}\label{2.2.12b}
\vert \mathbb E (f_{h})-\mathcal M_h\vert \leq r+C\exp(-{(N_{h}-1)r^2
\over \Vert f_h\Vert^2_{lips}})
\end{equation}
Il existe donc $a>0$ et $C_{1}>0$ tels que pour tout $r\in ]0,1]$ v\'erifiant
$r\geq C_{1}\exp(-aN_{h}r^2)$ on ait
\begin{equation}\label{2.2.12bis}
P_{h}(u\in S_{h}, \ \vert f_h(u)-\mathbb E(f_{h})\vert\geq 3r)\leq 2\exp(-aN_{h}r^2)
\end{equation}
En utilisant (\ref{2.2.1}), on en d\'eduit qu'il existe $M_{0}$ tel que pour tout
$M\geq M_{0}$, on a avec 
\be\label{rh}
r_{h}=MN_{h}^{-1/2}\vert \log(h)\vert^{1/2}+ C_{0}h^{-d+1}/N_{h}
\ee
\begin{equation}\label{2.2.12ter}
P_{h}(u\in S_{h}, \ \vert f_h(u)-m_{\sigma_{0}(A)})\vert\geq 4r_{h})\leq 2\exp(-aN_{h}r_{h}^2)
\end{equation}
et donc
\begin{equation}\label{2.2.12quad}
\mathbb P(\sup_{l\geq k} \vert f_{h_{l}}(u)-m_{\sigma_{0}(A)})\vert\leq 4\sup_{l\geq k}r_{h_{l}})\geq \Pi_{l\geq k}(1-2\exp(-aN_{h_{l}}r_{h_{l}}^2))
\end{equation}
On a d'apr\`es (\ref{rh}) $\lim_{l\rightarrow \infty} r_{h_{l}}=0$,
et aussi $\exp(-aN_{h_{l}}r_{h_{l}}^2)\leq h_{l}^{aM^2}$, donc
$$ \Pi_{l\geq k}(1-2\exp(-aN_{h_{l}}r_{h_{l}}^2) \geq 1-\varepsilon_k$$
avec $\lim_{k\rightarrow \infty}\varepsilon_k=0$.
 La preuve du th\'eor\`eme
\ref{thm2.1} est compl\`ete. 

\end{proof}

\subsection{Estimations $L^q$ presque sures}\label{sec2.3}

Dans cette section, on d\'emontre que les normes $L^q$ sont born\'ees presque surement si $q<+\infty$. On calcule aussi  l'ordre de grandeur des m\'edianes des fonctions 
$\Vert u\Vert_{L^q}$ sur $S_{h}$ pour $2\leq q \leq \infty$ .  On se limitera au cas $\mathbb{K}=\mathbb{C}$, les preuves dans le cas r\'eel \'etant similaire. 

Notons d'abord que les injections de Sobolev~\eqref{0.8} peuvent \^etre pr\'ecis\'ees en cas de localisation spectrale plus fine.
\begin{prop}\label{estimq}
Il existe $C_{0}>0$ tel que pour tout $q\in [2,\infty]$ et tout $u\in E_{h}$ on ait
\begin{equation}\label{eq.borne} 
\|  u \|_{L^q ( M)} \leq C_{0}N_{h}^{({1\over 2}-{1\over q})}\|u\|_{L^2(M)}. 
\end{equation}
\end{prop}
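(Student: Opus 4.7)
Le plan est de ramener l'estim\'ee pour tout $q \in [2,\infty]$ \`a une estim\'ee $L^\infty$ via l'in\'egalit\'e de H\"older \'el\'ementaire, de sorte que le point central sera de contr\^oler $\Vert u\Vert_{L^\infty}$ par $N_h^{1/2}\Vert u\Vert_{L^2}$ au lieu du $h^{-d/2}\Vert u\Vert_{L^2}$ fourni par \eqref{0.8}.

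D'abord, on \'etablit l'estim\'ee $L^\infty$. Pour $u=\sum_{k\in I_h}c_k e_k\in E_h$, l'in\'egalit\'e de Cauchy-Schwarz donne ponctuellement
\begin{equation*}
|u(x)|^2 \leq \Bigl(\sum_{k\in I_h}|c_k|^2\Bigr)\Bigl(\sum_{k\in I_h}|e_k(x)|^2\Bigr)=\Vert u\Vert_{L^2}^2\, e_{x,h},
\end{equation*}
avec les notations de la section~\ref{sec2.1}. Or la majoration \eqref{2.5bis} du lemme~\ref{lem2.1} (valable sous l'hypoth\`ese $b_h-a_h\geq Dh$ d\'ej\`a suppos\'ee ici) fournit $e_{x,h}\leq C N_h$ uniform\'ement en $x\in M$ et en $h\in ]0,1]$. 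On en d\'eduit
\begin{equation*}
\Vert u\Vert_{L^\infty(M)}\leq C^{1/2}N_h^{1/2}\,\Vert u\Vert_{L^2(M)}.
\end{equation*}

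Ensuite, on conclut par interpolation \'el\'ementaire. Pour $2\leq q\leq \infty$, on \'ecrit
\begin{equation*}
\int_M |u|^q\,dx=\int_M |u|^{q-2}|u|^2\,dx \leq \Vert u\Vert_{L^\infty}^{q-2}\Vert u\Vert_{L^2}^2,
\end{equation*}
d'o\`u $\Vert u\Vert_{L^q}\leq \Vert u\Vert_{L^\infty}^{1-2/q}\Vert u\Vert_{L^2}^{2/q}$. En injectant l'estim\'ee $L^\infty$ ci-dessus, on obtient
\begin{equation*}
\Vert u\Vert_{L^q(M)}\leq C_0\, N_h^{\frac{1}{2}(1-\frac{2}{q})}\Vert u\Vert_{L^2(M)}=C_0\, N_h^{\frac{1}{2}-\frac{1}{q}}\Vert u\Vert_{L^2(M)},
\end{equation*}
avec $C_0=\sup_{q\geq 2}C^{(1-2/q)/2}<\infty$, ce qui est bien l'estim\'ee souhait\'ee.

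La preuve est essentiellement sans difficult\'e; le seul point \`a signaler est que le gain par rapport \`a l'in\'egalit\'e de Sobolev classique \eqref{0.8bis} vient du contr\^ole fin de $e_{x,h}$ par $N_h$ (et pas simplement par $h^{-d}$), qui est lui-m\^eme une cons\'equence de la formule de Weyl avec reste. C'est essentiel dans le r\'egime $b_h-a_h=o(1)$ o\`u $N_h\ll h^{-d}$.
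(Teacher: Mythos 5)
Votre preuve est correcte et suit exactement la m\^eme d\'emarche que celle du texte: la majoration ponctuelle $|u(x)|\leq e_{x,h}^{1/2}\|u\|_{L^2}$ par Cauchy--Schwarz, la borne $e_{x,h}\leq CN_h$ issue de \eqref{2.5bis}, puis l'interpolation \'el\'ementaire $\|u\|_{L^q}\leq\|u\|_{L^\infty}^{1-2/q}\|u\|_{L^2}^{2/q}$. Rien \`a redire.
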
 

En effet, pour $u\in E_{h}$, en utilisant (\ref{2.2}), l'in\'egalit\'e de Cauchy-Schwarz,
ainsi que (\ref{2.5bis}), on obtient 
\be
\vert u(x) \vert \leq e_{x,h}^{1/2}\Vert u\Vert_{L^2}\leq CN_{h}^{1/2}\Vert u\Vert_{L^2}
\ee
et la proposition r\'esulte de $\Vert u\Vert_{L^q}\leq \Vert u\Vert_{L^\infty}^{1-2/q}
\Vert u\Vert_{L^2}^{2/q}$
pour tout $q\in [2,\infty]$.

\begin{thm}\label{th.lq} Il existe $ c_1, c_2 >0$  tels que pour tout $q\in ]2,+\infty[$, 
si on note $\mathcal{M}_{q,h}$ la m\'ediane de la fonction $ F(u)= \|u\|_{L^q}$ sur la sph\`ere $\mathbb{S} _{h}$, on a 
 pour tout  $\Lambda \geq 0$, et tout $h \in ]0, 1]$
\begin{equation}\label{estlq} 
P_{h} ( u \in S_{h} ; \Bigl|\|u \|_{L^q} - \mathcal{M}_{q,h}\Bigr| >\Lambda ) \leq 2 e^{- c_1 N_h^{ \frac {2}   q} \Lambda^2}
\end{equation}
De plus, on a les estimations suivantes de la m\'ediane 
\be\label{est.med1}
1\leq \mathcal{M}_{q,h} \leq c_{2}\sqrt q \quad \forall q\in [2,\infty[, \ \forall h\in ]0,1]
\ee
Sous l'hypoth\`ese $\lim_{h\rightarrow 0}N_{h}=\infty$, il existe $c_{3}>0$, $q_{0}>2$
 et $\varepsilon_{0}>0$ tels que 
\be\label{est.med2}
 c_{3}\sqrt {q}\leq \mathcal{M}_{q,h} \quad \forall q\in [q_{0}, \varepsilon_{0}\log (N_{h})], \ \forall h\in ]0,1]
\ee
Enfin, sous l'hypoth\`ese $\lim_{h\rightarrow 0}{b_{h}-a_{h}\over h}=\infty$, pour tout $\gamma \in ]0,1[$, il existe $q_{\gamma}>2$ et $\varepsilon_{\gamma}>0$ tels que 
\be\label{est.med3}
\gamma \sqrt {q\over 2e \text{Vol}(M)}\leq \mathcal{M}_{q,h}\leq {1\over \gamma}
\sqrt {q\over 2e\text{Vol}(M)} \quad \forall q\in [q_{\gamma}, \varepsilon_{\gamma}\log (N_{h})], \ \forall h\in ]0,1]
\ee

\end{thm}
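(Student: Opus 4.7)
Le plan est d'\'etablir d'abord la concentration \eqref{estlq} via la lipschitzianit\'e de $F(u) = \|u\|_{L^q(M)}$ sur $S_h$ et l'in\'egalit\'e de concentration de L\'evy. Pour $u, v \in E_h$, l'in\'egalit\'e triangulaire combin\'ee \`a la proposition~\ref{estimq} donne
\begin{equation*}
|F(u) - F(v)| \leq \|u - v\|_{L^q} \leq C_0 N_h^{1/2 - 1/q}\|u - v\|_{L^2},
\end{equation*}
donc $\|F\|_{\text{Lip}} \leq C_0 N_h^{1/2 - 1/q}$. L'application directe de la proposition de concentration de P. L\'evy \`a $F$ sur $S_h \simeq S^{2N_h - 1}$, exactement comme en~\eqref{2.2.12}, fournit alors~\eqref{estlq}.

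Je calcule ensuite le $q$-i\`eme moment $\mathbb{E}_h(\|u\|_{L^q}^q) = \int_M \mathbb{E}_h(|u(x)|^q)\,dx$. D'apr\`es~\eqref{2.3}--\eqref{2.4}, la loi de $|u(x)|^2/e_{x,h}$ sur $S_h$ est Beta$(1, N_h - 1)$, et un calcul direct via les fonctions $B$ et $\Gamma$ donne
\begin{equation*}
\mathbb{E}_h(|u(x)|^q) = e_{x,h}^{q/2}\,\frac{\Gamma(1+q/2)\,\Gamma(N_h)}{\Gamma(N_h + q/2)}.
\end{equation*}
Sous l'hypoth\`ese forte $\lim_{h} (b_h - a_h)/h = +\infty$, le lemme~\ref{lem2.1} fournit $e_{x,h} \sim N_h/\text{Vol}(M)$ uniform\'ement en $x$\,; coupl\'e \`a $\Gamma(N_h + q/2)/\Gamma(N_h) \sim N_h^{q/2}$ (valable pour $q = o(N_h)$) et \`a la formule de Stirling, on d\'eduit l'asymptotique $\mathbb{E}_h(F^q)^{1/q} \sim \sqrt{q/(2e\,\text{Vol}(M))}$. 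La majoration~\eqref{est.med1} r\'esulte alors de l'in\'egalit\'e de Markov ($\mathcal{M}_{q,h}^q \leq 2\mathbb{E}_h(F^q)$)\,; la minoration triviale $\mathcal{M}_{q,h} \geq 1$ (\`a constante multiplicative pr\`es) vient de l'in\'egalit\'e de H\"older $\|u\|_{L^2} \leq \|u\|_{L^q}\,\text{Vol}(M)^{1/2 - 1/q}$, valable pour \emph{tout} $u \in S_h$.

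Pour les minorations pr\'ecises~\eqref{est.med2} et~\eqref{est.med3}, l'id\'ee est d'utiliser~\eqref{estlq} pour montrer que $\mathcal{M}_{q,h}$ est proche de $\mathbb{E}_h(F^q)^{1/q}$. En \'ecrivant
\begin{equation*}
\mathbb{E}_h(F^q) = q\int_0^{+\infty} t^{q-1}\,P_h(F > t)\,dt \leq \mathcal{M}_{q,h}^q + 2q\int_0^{+\infty} (s + \mathcal{M}_{q,h})^{q-1}\, e^{-c N_h^{2/q} s^2}\,ds,
\end{equation*}
puis en majorant $(s + \mathcal{M}_{q,h})^{q-1} \leq 2^{q-2}(s^{q-1} + \mathcal{M}_{q,h}^{q-1})$, l'int\'egrale gaussienne r\'esiduelle s'\'evalue via $\Gamma(q/2)$ et s'av\`ere \^etre un $o(\mathcal{M}_{q,h}^q)$ d\`es que $q \leq \varepsilon_0 \log N_h$. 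En prenant la racine $q$-i\`eme, on obtient $\mathcal{M}_{q,h} \geq (1 - o(1))\mathbb{E}_h(F^q)^{1/q}$, d'o\`u~\eqref{est.med3} avec la constante optimale $1/\sqrt{2e\,\text{Vol}(M)}$ sous l'hypoth\`ese forte, et~\eqref{est.med2} avec une constante non-optimale sous l'hypoth\`ese faible $\lim_h N_h = +\infty$ (en rempla\c{c}ant l'asymptotique de $e_{x,h}$ par l'encadrement~\eqref{2.5bis}). L'obstacle principal sera le calibrage fin de $\varepsilon_0$ et de la plage $q \leq \varepsilon_0 \log N_h$, afin d'assurer que les facteurs $2^q$, $\Gamma(q/2)$ et $N_h^{-1/q}$ apparaissant dans le reste gaussien se combinent pour donner un terme n\'egligeable devant $\mathcal{M}_{q,h}^q \sim (q/(2e\,\text{Vol}(M)))^{q/2}$, tout en pr\'eservant la constante optimale dans~\eqref{est.med3}.
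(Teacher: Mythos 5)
Votre preuve de la concentration~\eqref{estlq}, le calcul des moments $\mathbb{E}_h(|u(x)|^q)=e_{x,h}^{q/2}\,\Gamma(1+q/2)\Gamma(N_h)/\Gamma(N_h+q/2)$, et la majoration~\eqref{est.med1} via Bienaym\'e--Tchebychev co\"\i ncident avec l'argument du papier. L'asymptotique $\mathcal{A}_{q,h}\sim\sqrt{q/(2e\,\text{Vol}(M))}$ est aussi correcte, et la remarque sur la minoration en $\text{Vol}(M)^{1/q-1/2}$ est pertinente.

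En revanche, il y a une lacune r\'eelle dans la d\'eduction des minorations~\eqref{est.med2} et~\eqref{est.med3}. L'\'ecriture
\begin{equation*}
\mathbb{E}_h(F^q)\leq \mathcal{M}_{q,h}^q + 2q\int_0^{+\infty}(s+\mathcal{M}_{q,h})^{q-1}e^{-cN_h^{2/q}s^2}\,ds
\end{equation*}
est valide, mais la majoration $(s+\mathcal{M})^{q-1}\leq 2^{q-2}(s^{q-1}+\mathcal{M}^{q-1})$ est trop co\^uteuse. Le terme provenant de $\mathcal{M}^{q-1}$ donne, apr\`es \'evaluation de l'int\'egrale gaussienne, une contribution de l'ordre de
\begin{equation*}
q\,2^{q-2}\,\mathcal{M}_{q,h}^{q-1}\,\frac{\sqrt{\pi}}{\sqrt{c}\,N_h^{1/q}},
\end{equation*}
dont le quotient par $\mathcal{M}_{q,h}^q\sim\bigl(q/(2e\,\text{Vol}(M))\bigr)^{q/2}$ se comporte comme $\sqrt{q}\,2^{q-2}/N_h^{1/q}$. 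Or sur la plage $q\sim\varepsilon_0\log N_h$, le facteur $N_h^{1/q}$ reste \emph{born\'e} ($N_h^{1/q}=e^{1/\varepsilon_0}$) et ne peut en aucun cas absorber la croissance exponentielle $2^q$. Votre reste n'est donc pas $o(\mathcal{M}_{q,h}^q)$ --- il tend vers $+\infty$ relativement \`a $\mathcal{M}_{q,h}^q$ --- et le \og{}calibrage fin de $\varepsilon_0$\fg{} que vous esp\'eriez ne peut pas sauver l'argument.

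Le papier contourne enti\`erement ce probl\`eme en n'\'elevant jamais $(s+\mathcal{M})$ \`a une puissance~: il applique d'abord l'in\'egalit\'e triangulaire dans $L^q(\mathbb{S}_h,dP_h)$,
\begin{equation*}
\bigl|\mathcal{A}_{q,h}-\mathcal{M}_{q,h}\bigr|
=\Bigl|\,\|F\|_{L^q(\mathbb{S}_h)}-\|\mathcal{M}_{q,h}\|_{L^q(\mathbb{S}_h)}\,\Bigr|
\leq \|F-\mathcal{M}_{q,h}\|_{L^q(\mathbb{S}_h)},
\end{equation*}
puis estime le membre de droite par
\begin{equation*}
\|F-\mathcal{M}_{q,h}\|_{L^q(\mathbb{S}_h)}^q
= q\int_0^{+\infty}\lambda^{q-1}P_h\bigl(|F-\mathcal{M}_{q,h}|>\lambda\bigr)\,d\lambda
\leq 2q\int_0^{+\infty}\lambda^{q-1}e^{-c_1N_h^{2/q}\lambda^2}\,d\lambda
= \frac{q\,\Gamma(q/2)}{c_1^{q/2}N_h},
\end{equation*}
d'o\`u $|\mathcal{A}_{q,h}-\mathcal{M}_{q,h}|\leq C\sqrt{q}\,N_h^{-1/q}$ sans aucune perte exponentielle. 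C'est exactement l'in\'egalit\'e~\eqref{aj2} du texte, qui se compare alors directement \`a $\mathcal{A}_{q,h}\in[a_1\sqrt{q},a_2\sqrt{q}]$ sur la plage $q\leq\varepsilon_0\log N_h$. Pour corriger votre proposition, remplacez le d\'eveloppement de $(s+\mathcal{M})^{q-1}$ par cette in\'egalit\'e triangulaire; le reste de votre raisonnement (asymptotique de $\mathcal{A}_{q,h}$, encadrement de $e_{x,h}$, formule de Stirling) est d\'ej\`a correct et inchang\'e.
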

\begin{proof}

 D'apr\`es la proposition \ref{estimq}, on a
  \be\ba
 |F(u) - F(v) | &= |\|u\|_{L^q}- \|v\|_{L^q}| \\
 &\leq \|u-v\|_{L^q} \leq C_{0} N_h^{(\frac 1 2- \frac 1 q)} \|u-v\|_{L^2}
  \leq C N_h^{ (\frac 1 2- \frac 1 q)} \text{dist} (u,v)
   \ea\ee
   On en d\'eduit 
$$
  \|F\|_{\text{Lips}} \leq C N_h^{ (\frac 1 2- \frac 1 q)} 
$$
donc en utilisant le r\'esultat de
  concentration de la mesure (voir appendice A, proposition \ref{concentration})
 \`a la fonction  $ F(u)= \|u\|_{L^q}$ sur la sph\`ere $\mathbb{S} _{h}$ on obtient
\begin{equation}\label{eq.gdedev1}
 P_{h} ( |F(u) - \mathcal{M}_{q,h}| >r ) \leq 2 e^{- {N_{h}-1\over C^2N_{h}}N_h^{\frac 2 q} r^2}
 \end{equation}
 ce qui implique (\ref{estlq}). Pour l'estimation (\ref{est.med1}) de la m\'ediane, en utilisant
$$ \mathbb{E}_{h} (|g|^q) = q \int_0^{+\infty} \lambda^{q-1} P_{h} ( |g| >\lambda) d \lambda$$
on obtient d'apr\`es (\ref{2.3}) et (\ref{2.4}),
\begin{multline}\label{espmed1}
\mathbb{E}_{h} ( \|u\|_{L^q} ^q)
= \int_{S_{h} }\int_M \vert u(x)\vert ^q dx dp_{h} =  \int_M \int_{S_{h} } \vert u(x)\vert ^q dp_{h} dx\\
 =  q\int_M\int_{0}^\infty \lambda^{q-1}P_h ( | u(x)| >\lambda )d\lambda dx 
= q\int_M \int_{0}^{e_{x,h}^{1/2}}\lambda^{q-1} (1-{\lambda^2\over e_{x,h}})^{(N_{h}-1)}d\lambda dx \\
= q\Big( \int_M e_{x,h}^{q/2}\Big)\Big(\int_{0}^1 z^{q-1}
(1-z^2)^{(N_{h}-1)}dz\Big)=\mathcal A_{q,h}^q
\end{multline}
Le lecteur pourra v\'erifier qu'on a bien $\mathcal A_{2,h}=1$.  En notant
$B(x,y)=\int_{0}^1t^{x-1}(1-t)^{y-1}dt$ la fonction b\'eta, on a 
\be\label{espmed9}
\int_{0}^1 z^{q-1}
(1-z^2)^{(N_{h}-1)}dz= {1\over 2}B(q/2,N_{h})={\Gamma(q/2)\Gamma(N_{h})\over 2\Gamma(q/2+N_{h})}
\ee
donc en utilisant le lemme \ref{lem2.1} et la formule de Stirling,
\be\label{espmed2}
\mathcal A_{q,h}^q\leq q C^q\Bigl( {N_{h}\over {N_{h}+ \frac q 2}}\Bigr)^{N_{h}+q/2-1/2}\Gamma(q/2) \quad \text{d'o\`u} \quad
\mathcal A_{q,h} \leq C'\sqrt q.
\ee
D'apr\`es l'in\'egalit\'e de Bienaym\'e-Tchebychev, on a
 \begin{equation}\label{eq.tcheb}
 P_{h} ( \Vert u\Vert _{L^q} >t ) \leq \frac 1 {t^q} \mathbb{E} _{h} ( \Vert u\Vert^q_{L^q})
= \Bigl(\frac{ \mathcal A_{q,h}}{ t}\Bigr)^{q}.
\end{equation}
En choisissant $t = \mathcal{M}_{q,h}$, on obtient
 $$ \frac 1 2 \leq \lim_{\varepsilon \rightarrow 0}
 P_{h} ( \|u\|_{L^q}> \mathcal{M}_{q,h} -\varepsilon) \leq 
 \Bigl(\frac{ \mathcal A_{q,h}}{ \mathcal{M}_{q,h}}\Bigr)^{q}$$
 qui implique 
 \begin{equation}\label{medsup}
  \mathcal{M}_{q,h} \leq 2^{1/q}\mathcal A_{q,h}
  \end{equation}
donc $\mathcal{M}_{q,h}\leq c_{2}\sqrt q$ d'apr\`es (\ref{espmed2}). Comme $F(u)=\Vert u \Vert_{L^q}\geq \Vert u \Vert_{L^2}=1$, on a aussi $\mathcal{M}_{q,h}\geq 1$, ce qui prouve (\ref{est.med1}).

Prouvons \`a pr\'esent (\ref{est.med2}).  On remarque d'abord qu'on a en utilisant 
$\lim_{h\rightarrow 0}N_{h}=\infty$,  (\ref{espmed1}) et le lemme
\ref{lem2.1}, avec $0<a_{1}\leq a_{2}$ ind\'ependants de $h$ petit et de $q\in [2,\log (N_{h})]$

\be\label{aj1}
\mathcal A_{q,h} \in [a_{1}\sqrt q,a_{2}\sqrt q]
\ee
et
\begin{multline}
\bigl| \mathcal{A}_{q,h} - \mathcal{M}_{q,h} \bigr| ^q = \bigl| \|F\|_{L^q(\mathbb{S}_h)} - \|\mathcal{M}_{q,h}\|_{L^q(\mathbb{S}_h)} \bigr|^q\\
 \leq \| F - \mathcal{M}_{q,h} \|^q_{L^q(\mathbb{S}_h)} = q \int_0^{+\infty} \lambda^{q-1} P_h( |F- \mathcal{M}_{q,h}| >\lambda) d \lambda\\
\leq q \int_0^{+\infty} \lambda^{q-1} e^{-c_1 N_h^{\frac 2 q} \lambda^2} d\lambda
= {q\over 2N_{h}c_{1}^{q/2}}\Gamma( q/2).
\end{multline}
On en d\'eduit
\be\label{aj2}
\bigl| \mathcal{A}_{q,h} - \mathcal{M}_{q,h} \bigr| \leq \frac C { N_h^{1/ q}}  \sqrt{q}
\ee
qui implique (\ref{est.med2}) d'apr\`es~\eqref{aj1}. Montrons maintenant (\ref{est.med3}).
On a d'apr\`es~\eqref{2.5}
\be\label{aj5}
\Bigl(\int_{M}e_{x,h}^{q/2}dx \Bigr)^{1/q}= {N_{h}^{1/2}\over Vol(M)^{1/2-1/q}}(1+o_{h}(1)), \quad
\lim_{h\rightarrow 0} o_{h}(1)=0
\ee
et il en r\'esulte en utilisant (\ref{espmed1}), pour $q\leq \log (N_{h})$
\be\label{aj6}
\mathcal A_{q,h}= \sqrt {q\over 2e\text{Vol}(M)}((1+o_{h,q}(1)))
\ee
avec $\lim_{q\rightarrow\infty, h\rightarrow 0} o_{h,q}(1)=0$. L'encadrement
(\ref{est.med3}) r\'esulte alors  de (\ref{aj2}).
La preuve du th\'eor\`eme \ref{th.lq} est compl\`ete.

\end{proof}
\noindent
Nous allons \`a pr\'esent d\'eduire du th\'eor\`eme \ref{th.lq} une 
estimation $L^\infty$ qui compl\`ete le th\'eor\`eme \ref{thm0.2}.

\begin{thm}\label{th.linfini}
On suppose qu'il existe $a>0$ tel que $N_{h}\geq h^{-a}$ pour $h$ petit (cette hypoth\`ese est toujours v\'erifi\'ee en dimension $d\geq 2$).
Il existe $h_{0},c_{0},c_{1}>0$ tels que pour tout $h\in ]0,h_{0}]$ on ait
\be\label{aj7}
\mathbb E_{h}(\Vert u \Vert_{L^\infty}) \in [c_{0} \sqrt{\vert\log(h)\vert},c_{1}
\sqrt{\vert\log(h)\vert}] 
\ee 
et en notant $\mathcal M_{\infty,h}$ la m\'ediane de $F_{\infty}(u)=\Vert u \Vert_{L^\infty}$
\be\label{aj8}
\mathcal M_{\infty,h} \in [c_{0}\sqrt{\vert\log(h)\vert},
c_{1}\sqrt{\vert\log(h)\vert}] 
\ee 
\end{thm}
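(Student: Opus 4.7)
The plan is to derive both bounds as consequences of results already established: the upper bounds follow by integrating the Gaussian tail from Theorem~\ref{thm0.2}, and the lower bounds by combining the trivial inequality $\|u\|_{L^\infty}\geq \mathrm{Vol}(M)^{-1/q}\|u\|_{L^q}$ with the sharp median lower bound~\eqref{est.med2}.

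For the upper bounds, fix $M>\sqrt{c_1/c_2}$ and set $\Lambda_h=M\sqrt{|\log h|}$. By~\eqref{0.11bis},
\begin{equation*}
P_h(\|u\|_{L^\infty}>\Lambda_h)\leq C h^{-c_1}e^{-c_2 M^2 |\log h|}=C h^{c_2 M^2-c_1},
\end{equation*}
which is less than $1/2$ for $h$ small. By definition of the median this forces $\mathcal M_{\infty,h}\leq \Lambda_h$, giving the upper estimate in~\eqref{aj8}. For~\eqref{aj7} I would write $\mathbb E_h(\|u\|_{L^\infty})=\int_0^\infty P_h(\|u\|_{L^\infty}>t)\,dt$, split the integral at $t=\Lambda_h$ (first piece $\leq \Lambda_h$), and bound the tail $\int_{\Lambda_h}^\infty C h^{-c_1}e^{-c_2 t^2}\,dt$ by a constant using Gaussian decay with $c_2 M^2>c_1$; this yields the upper bound in~\eqref{aj7}.

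For the lower bounds, the key observation is that for every $q\in[2,\infty[$, H\"older gives $\|u\|_{L^q}\leq \mathrm{Vol}(M)^{1/q}\|u\|_{L^\infty}$, hence on the event $\{\|u\|_{L^q}\geq \mathcal M_{q,h}\}$ (which has probability $\geq 1/2$) one has $\|u\|_{L^\infty}\geq \mathrm{Vol}(M)^{-1/q}\mathcal M_{q,h}$. This yields
\begin{equation*}
\mathcal M_{\infty,h}\geq \mathrm{Vol}(M)^{-1/q}\,\mathcal M_{q,h}.
\end{equation*}
The hypothesis $N_h\geq h^{-a}$ implies $\log N_h\geq a|\log h|\to \infty$, in particular $\lim_{h\to 0}N_h=\infty$, so~\eqref{est.med2} applies. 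Choosing $q=q_h:=\varepsilon_0\log(N_h)$ (which lies in the admissible range $[q_0,\varepsilon_0\log N_h]$ for $h$ small) gives $\mathcal M_{q_h,h}\geq c_3\sqrt{q_h}\geq c_3\sqrt{\varepsilon_0 a}\sqrt{|\log h|}$. Since $q_h\to\infty$, the factor $\mathrm{Vol}(M)^{-1/q_h}$ tends to $1$, producing the lower bound in~\eqref{aj8}. The lower bound in~\eqref{aj7} then follows from the elementary fact that for any nonnegative random variable, $\mathbb E\geq \mathcal M/2$ (proved by $\mathbb E=\int_0^\infty P(X>t)\,dt\geq \int_0^{\mathcal M}P(X\geq \mathcal M)\,dt\geq \mathcal M/2$).

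The argument is essentially a bookkeeping exercise, and no step is hard once the earlier results are in place. The only delicate point is to ensure that $q_h=\varepsilon_0\log N_h$ genuinely lies in the range where~\eqref{est.med2} is available, which is exactly where the hypothesis $N_h\geq h^{-a}$ is needed (to give a quantitative lower bound $\sqrt{q_h}\gtrsim \sqrt{|\log h|}$ rather than merely a qualitative divergence).
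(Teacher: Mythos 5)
Your proof is correct, but it takes a noticeably different route from the paper's. The paper works entirely through a single well-chosen $L^q$ norm: using the Bernstein-type two-sided inequality $\|u\|_{L^q}\leq\|u\|_{L^\infty}\leq C_0 h^{-d/q}\|u\|_{L^q}$ with the specific choice $q_h=a\varepsilon_0|\log h|$ (so that $h^{-d/q_h}$ stays bounded), it transfers the estimate $\mathcal M_{q_h,h}\simeq\sqrt{q_h}$ from \eqref{est.med1}--\eqref{est.med2} to $\mathbb E_h(\|u\|_{L^q})$, thence to $\mathbb E_h(\|u\|_{L^\infty})$, and finally to $\mathcal M_{\infty,h}$, using the uniform bound $|\mathbb E_h(\|u\|_{L^q})-\mathcal M_{q,h}|\leq\sqrt{\pi/c_1}$ obtained from the concentration inequality \eqref{eq.gdedev1} (which is applied to $q=\infty$ as well in the last step). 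You instead decouple the upper and lower bounds: for the upper bound you integrate the Gaussian tail of Theorem~\ref{thm0.2} directly, which is shorter and bypasses Bernstein entirely; for the lower bound you use only the trivial one-sided inequality $\|u\|_{L^\infty}\geq\mathrm{Vol}(M)^{-1/q}\|u\|_{L^q}$, \eqref{est.med2} with $q_h=\varepsilon_0\log N_h$ (so the hypothesis $N_h\geq h^{-a}$ gives $\sqrt{q_h}\gtrsim\sqrt{|\log h|}$), and the elementary $\mathbb E\geq\mathcal M/2$ for nonnegative variables. Both approaches rest on the same key ingredient \eqref{est.med2}; yours is somewhat more modular and elementary (it never needs the sharp constant $C_1=C_0e^{d/a\varepsilon_0}$ controlling $h^{-d/q_h}$, nor the concentration bound on $|\mathbb E-\mathcal M|$), while the paper's gains uniformity by routing both expectation and median through the single intermediate quantity $\mathbb E_h(\|u\|_{L^{q_h}})$. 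Each is a legitimate proof.
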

\begin{proof}
Il existe $C_{0}>0$ tel que pour tout $h\in ]0,1]$,  tout $q>2$,
et tout $u\in E_{h}$ on a 
$$\Vert u \Vert_{q}\leq \Vert u \Vert_{L^\infty}\leq C_{0}{h}^{-d/q}\Vert u \Vert_{q}$$
donc en choisissant $q_{h}=a\varepsilon_{0}\vert\log (h)\vert$, 
on obtient avec $C_{1}=C_{0}e^{d/{a\varepsilon_{0}}}$ ind\'ependant de $h$
\be\label{aj9}
\mathbb E_{h}(\Vert u \Vert_{q_{h}})\leq \mathbb E_{h}(\Vert u \Vert_{L^\infty})
\leq C_{1}\mathbb E_{h}(\Vert u \Vert_{q_{h}})
\ee
Or pour tout $q\in [2,\infty]$, on a par le r\'esultat de concentration de la mesure
d'apr\`es (\ref{eq.gdedev1}) (qui s'applique aussi \`a $q=\infty$), avec $c_{1}$
ind\'ependant de $h$
\be\label{aj10}
\ba
\vert \mathbb E_{h}(\Vert u \Vert_{q})-\mathcal M_{q,h}\vert
&\leq \int \vert \Vert u \Vert_{q}-\mathcal M_{q,h}\vert dP =
\int _{0}^\infty  P_{h} (\vert \Vert u \Vert_{q}-\mathcal M_{q,h}\vert>\lambda)d\lambda\\
&\leq 2\int _{0}^\infty e^{-c_{1}\lambda^2}=\sqrt {\pi/c_{1}}
\ea\ee
Comme on a $q_{h}\leq \varepsilon_{0}\log(N_{h})$,  (\ref{est.med1}) et (\ref{est.med2})
impliquent $\mathcal M_{q_{h},h}\simeq 
\sqrt{q_{h}}$. On d\'eduit alors de (\ref{aj10}) pour $h$ petit $\mathbb E_{h}(\Vert u \Vert_{q_{h}})\simeq \sqrt{q_{h}}$, donc aussi d'apr\`es (\ref{aj9}), $\mathbb E_{h}(\Vert u \Vert_{L^\infty})\simeq \sqrt{q_{h}}$, donc \`a nouveau par (\ref{aj10}) $\mathcal M_{\infty,h}\simeq \sqrt{q_{h}}$. 
La preuve du th\'eor\`eme \ref{th.linfini} est compl\`ete.

 \end{proof}
%%%%%%%%%%%%%%%%%%%%%%%%%%%%%%%%%%%%%%%%%%%%%%%%%%%%%%%%%%%%%%%%%%
\section{Application aux harmoniques sph\'eriques}\label{sec3}
%\subsection{Harmoniques sph\'eriques}
Rappelons que les fonctions propres du laplacien sur la sph\`ere $\mathbb{S}^d$,  (muni de sa m\'etrique standard) sont les harmoniques sph\'eriques de degr\'e $k$ (restrictions \`a la sph\`ere $\mathbb{S}^{d}$ des polynomes harmoniques de degr\'e $k$) et qu'elles v\'erifient:
\begin{itemize}
\item Pour tout $k\in \mathbb{N}$, l'espace vectoriel sur $\mathbb{K}$ des harmoniques sph\'eriques de degr\'e $k$, $E^k$, est de dimension 
\begin{equation}\label{dimension}N_k= \binom{k+d}{d} - \binom{k+d-2}{d}\sim_{k\rightarrow + \infty} \frac {2} {(d-1)!} k^{d-1}
\end{equation}
\item Pour tout $e\in E^k$, $-\mathbf{\Delta}_{\mathbb{S}^d} e=  k (k+d-1)e$.
\item L'espace vectoriel engendr\'e par les harmoniques sph\'eriques est dense dans $L^2( \mathbb{S}^d)$
\end{itemize} 
On peut identifier l'espace des bases orthonormales de l'espace $E^k$ (muni de la norme $L^2$) avec le  groupe  unitaire, $U(N_k) $, si $\mathbb{K} = \mathbb{C}$ et avec le groupe  orthogonal, $O(N_k)$, si $\mathbb{K}= \mathbb{R}$. On munit $U(N_k)$ (resp. $O(N_k)$) de sa mesure de Haar, $\nu_k$ (resp. $\widetilde{\nu} _k$). On peut alors identifier l'espace des bases Hilbertiennes  de $L^2(M; \mathbb{K})$ form\'ees d'harmoniques sph\'eriques avec 
$$\mathcal{B}= \times_{k\in \mathbb{N}}U(N_k) , \qquad (\text{resp. } \widetilde{\mathcal{B}}=\times_{k\in \mathbb{N}}O(N_k))
$$
qu'on munit de la mesure de probabilit\'e produit
$$\nu= \otimes_k \nu_k, (\text{resp.}  \widetilde{\nu}= \otimes_k \widetilde{\nu}_k)
$$ 
On a alors le r\'esultat suivant
\begin{thm} \label{thm4} Soit $d\geq 2$. Il existe $c>0$ et pour tout $2\leq q <+ \infty$,
$C_{q}>0$ tels que
$$\nu(\{ B= (b_{k,l})_{k\in \mathbb{N}, l=1, \cdots N_k}\in \mathcal{B}; \exists k,l; \Bigl| \|b_{k,l} \|_{L^q( \mathbb{S}^d)} - \mathcal M_{q,k} \Bigr| > r\} ) \leq C_{q} e^{-cr^2}
$$
$$\widetilde{\nu}(\{ B= (b_{k,l})_{k\in \mathbb{N}, l=1, \cdots N_k}\in \widetilde{\mathcal{B}}; \exists k,l; \Bigl| \|b_{k,l} \|_{L^q( \mathbb{S}^d)} - \mathcal M_{q,k} \Bigr| > r\} ) 
\leq C_{q} e^{-cr^2}
$$
o\`u la m\'ediane (dans le cas complexe) $\mathcal{M}_{q,k}$ v\'erifie avec $C$ ind\'ependant de $q,k$
\be\label{est.med4}\ba
&\bigl| \mathcal{A}_{q,k} - \mathcal{M}_{q,k}\bigr|\leq  \frac{C} { N_{k}^{1 /q}}\sqrt{q}\\
&\mathcal{A}_{q,k}= \frac{ N_{k}^{\frac 1 2 }}{(\text{Vol}(\mathbb{S}^{d}))^{\frac  1 2 -\frac 1 q} }\Bigl(q\frac{\Gamma(q/2)\Gamma(N_{k})} {2\Gamma(q/2+N_{k})}\Bigr)^{\frac 1 q}
  \ea\ee
\end{thm}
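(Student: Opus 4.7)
Le plan est de ramener l'\'enonc\'e aux r\'esultats de concentration de la section \ref{sec2.3} via l'invariance de la mesure de Haar, puis de passer \`a l'uniformit\'e sur l'ensemble des vecteurs de base par une simple borne d'union. Pour chaque $k$ fix\'e et chaque indice $l\in\{1,\ldots,N_k\}$, l'application $B\mapsto b_{k,l}$ envoie la mesure de Haar $\nu_k$ (resp.\ $\widetilde{\nu}_k$) sur la mesure uniforme sur la sph\`ere unit\'e de $E^k$ (resp.\ $\widetilde{E}^k$): en effet, pour tout $U\in U(N_k)$, $UB$ est encore de Haar, donc la loi de $b_{k,l}=Be_l$ est invariante par tout $U$, ce qui impose l'uniformit\'e sur la sph\`ere.

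Une difficult\'e apparente est que la localisation spectrale sur $E^k$ (une unique valeur propre $\omega_k=\sqrt{k(k+d-1)}$) ne v\'erifie pas l'hypoth\`ese $b_h-a_h\geq Dh$ du th\'eor\`eme \ref{th.lq}. On la contourne gr\^ace \`a la formule d'addition des harmoniques sph\'eriques: pour toute base orthonorm\'ee $(Y_l)_{l=1}^{N_k}$ de $E^k$, la fonction $x\mapsto\sum_l|Y_l(x)|^2$ est invariante par le groupe des rotations, donc constante, \'egale \`a $N_k/\text{Vol}(\mathbb{S}^d)$ par int\'egration. Ceci donne l'identit\'e exacte $e_{x,k}=N_k/\text{Vol}(\mathbb{S}^d)$, qui remplace avantageusement l'estimation \eqref{2.5bis} issue du lemme \ref{lem2.1}. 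Toute la preuve du th\'eor\`eme \ref{th.lq} se transcrit alors mot pour mot et fournit, pour chaque $(k,l)$ fix\'e,
\[
\nu\bigl(\bigl|\|b_{k,l}\|_{L^q}-\mathcal{M}_{q,k}\bigr|>r\bigr)\leq 2\, e^{-c_1 N_k^{2/q}r^2},
\]
et sym\'etriquement pour $\widetilde{\nu}$.

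Il reste \`a additionner ces bornes sur les $(k,l)$; la somme
\[
\sum_{k\geq 0} 2N_k\, e^{-c_1 N_k^{2/q}r^2}
\]
doit \^etre major\'ee par $C_q e^{-cr^2}$. On s\'epare deux r\'egimes: pour $r\leq r_0(q)$ bien choisi, la probabilit\'e est trivialement born\'ee par $1\leq e^{c r_0(q)^2}e^{-cr^2}$; pour $r>r_0(q)$, on utilise $N_k^{2/q}r^2\geq r^2+(N_k^{2/q}-1)r_0(q)^2$, ce qui permet de factoriser $e^{-c_1 r^2}$ devant $\sum_k N_k e^{-c_1(N_k^{2/q}-1)r_0(q)^2}$, somme convergente car $N_k\sim \frac{2}{(d-1)!}k^{d-1}\to\infty$ impose $N_k^{2/q}\to\infty$ pour $d\geq 2$.

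Pour terminer, en reportant $e_{x,k}=N_k/\text{Vol}(\mathbb{S}^d)$ dans \eqref{espmed1} et \eqref{espmed9}, on obtient
\[
\mathcal{A}_{q,k}^q = q\,\text{Vol}(\mathbb{S}^d)\,\Bigl(\frac{N_k}{\text{Vol}(\mathbb{S}^d)}\Bigr)^{q/2}\cdot\frac{\Gamma(q/2)\Gamma(N_k)}{2\Gamma(q/2+N_k)},
\]
ce qui fournit exactement l'expression de \eqref{est.med4}. L'\'ecart $|\mathcal{A}_{q,k}-\mathcal{M}_{q,k}|\leq CN_k^{-1/q}\sqrt q$ r\'esulte alors de l'int\'egration en $r$ de la queue de concentration \eqref{estlq}, comme dans la d\'emonstration de \eqref{aj2}. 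Le point technique principal est la gestion uniforme en $r$ de la borne d'union; le reste est une transcription directe des arguments de la section \ref{sec2.3}, la formule d'addition des harmoniques sph\'eriques se substituant \`a l'hypoth\`ese spectrale \eqref{eq.mino}.
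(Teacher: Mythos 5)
Votre d\'emonstration suit essentiellement le m\^eme chemin que celle du papier: l'image de la mesure de Haar $\nu_k$ par $B\mapsto b_{k,l}$ est la mesure uniforme $P_{h_k}$, l'invariance par rotation de $E^k$ donne $e_{x,k}=N_k/\mathrm{Vol}(\mathbb{S}^d)$ exactement (le lemme~\ref{lem6.7} du texte) ce qui dispense de l'hypoth\`ese $b_h-a_h\geq Dh$, et l'on applique le th\'eor\`eme~\ref{th.lq} puis une borne d'union en $l$ et une sommation en $k$ exploitant $N_k\to\infty$ pour $d\geq 2$. La seule diff\'erence est de pr\'esentation dans l'\'etape de sommation (le texte coupe en $r\gtrless 1$ et absorbe le facteur polynomial dans la moiti\'e de l'exponentielle, vous coupez en un $r_0(q)$ g\'en\'erique), ce qui m\`ene \`a la m\^eme conclusion.
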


\begin{proof}
 On se limite au cas complexe, $\mathbb{K} = \mathbb{C}$, le cas r\'eel \'etant similaire. 
 On commence par remarquer que, quitte \`a augmenter la constante $C_{q}$, on peut se contenter de prouver le th\'eor\`eme pour les 
 bases orthonormales du sous espace $\oplus_{k\geq 1}E^k$ de $L^2$. On prend $h_k= k^{-1}$. On remarque alors que 
$$\sqrt{ k (k+d-1)} = h_{k}^{-1} \sqrt{ 1+ (d-1) h_k} = h_k^{-1} + \frac{ d-1} 2 + O(h_k). $$
On en d\'eduit qu'avec 
$$a_h = 1+ h \bigl(\frac{d-1} 2 - \frac 1 4\bigr), \qquad b_h =  1+ h \bigl(\frac{d-1} 2 + \frac 1 4\bigr) 
$$
on a pour $h_k = k^{-1}, k \in \mathbb{N^*}$, $E_{h_k}=E^k$ pour $k$ assez grand. 
 On remarquera que l'hypoth\`ese ~\eqref{eq.mino} avec $D$ assez grand n'est plus v\'erifi\'ee,  mais cela est sans importance car on a une meilleure estimation pour $e_{x,h}$ que celle donn\'ee par le lemme~\ref{lem2.1}:
 \begin{lem}\label{lem6.7}
 Pour tout $x\in \mathbb{S}^d$ et tout $k\in \mathbb{N}$, on a 
 $$e_{x,h_k} = \frac{N_{k}}{\text{Vol($M$)}}.$$
 \end{lem}
 En effet, le noyau du projecteur orthogonal sur l'espace $E^k$ est donn\'e par 
$$ K(x,y)= \sum_{j=1}^{N_{h_k}} e_{j,k}(x) \overline{e_{j,k}(y)},$$
o\`u $(e_{j,k})_{j=1}^{N_{h_k}}$ sont une base orthornomale de $E^k$. 
L'espace $E^k$ \'etant invariant par l'action des rotations de la sph\`ere, on en d\'eduit que $K(x,y)= K(Rx, Ry)$ pour toute rotation $R$. La fonction $x\in \mathbb{S}^d \mapsto e_{x,h_k}$ est donc constante, d'int\'egrale $N_{k}$, d'o\`u le lemme~\ref{lem6.7}. On en d\'eduit que les th\'eor\`emes \ref{thm0.2} et \ref{th.lq} s'appliquent, avec d'apr\`es~\eqref{espmed1},~\eqref{espmed9} et~\eqref{aj2}
$$\mathcal{A}_{q,k}= \frac{ N_{k}^{\frac 1 2 }}{(\text{Vol}(\mathbb{S}^{d}))^{\frac  1 2 -\frac 1 q} }\Bigl(q\frac{\Gamma(q/2)\Gamma(N_{k})} {2\Gamma(q/2+N_{k})}\Bigr)^{\frac 1 q}, \quad 
 \bigl| \mathcal{A}_{q,k} - \mathcal{M}_{q,k}\bigr|\leq  \frac{C} { N_{k}^{1 /q}}\sqrt{q}$$
En particulier, le  th\'eor\`eme~\ref{th.lq} implique 
\begin{prop} Il existe $c_{0},c_{1}>0$ tels que pour tout $k\geq 1$ et tout $q\geq 2$, on a 
\begin{equation}\label{eq.bon} \nu_k ( \{ B= (b_{l})_{l=1}^{N_k} \in U(N_k); \exists 1\leq  l\leq N_k;  \Bigl|\|b_l \|_{L^q} - \mathcal M_{q,k}\Bigr| >\Lambda ) \leq  c_0 e^{- c_1 k^{ \frac {2(d-1)}   q} \Lambda^2}k^{d-1}
\end{equation}
\end{prop}
En effet, l'application 
$$ B= (b_l)_{l=1}^{N_k} \in U(N_k) \mapsto b_{1} \in S_k$$
envoie la mesure $\nu_k$ sur la mesure $P_{h_k}$ et donc d'apr\`es le th\'eor\`eme~\ref{th.lq} pour tout $1\leq l_0\leq N_k$
$$ \nu_k ( \{ B= (b_l)_{l=1}^{N_k}\in U(N_k);   \Bigl|\|b_{l_0} \|_{L^q} - \mathcal M_{q,k}\Bigr| >\Lambda ) \leq  2 e^{- c_1 k^{ \frac {2(d-1)}   q} \Lambda^2}.
$$
On en d\'eduit que 
$$ \nu_k ( \{ B= (b_l)_{l=1}^{N_k}\in U(N_k); \exists l_0\in \{1, \dots, N_k\};  \Bigl|\|b_{l_0} \|_{L^q} - \mathcal M_{q,k}\Bigr| >\Lambda ) \leq  2 e^{- c_1 k^{ \frac {2(d-1)}   q} \Lambda^2}N_k,
$$
ce qui implique clairement~\eqref{eq.bon}. 

On d\'efinit maintenant les ensembles 
$$F_{k,r}= \{ B= (b_k)\in U(N_{h});  \forall  l_0; \Bigl|\|b_{k, l_0} \|_{L^q} - \mathcal M_{q,k} \Bigr| \leq r\}
$$
et 
$$ F_r= \cap_{k\geq 1} F_{k,r}= \{ B= (b_{k,l}); k \in \mathbb{N}^*, l =1 , \cdots N_k; \forall  k_0, l_0; \Bigl|\|b_{k_0, l_0} \|_{L^q} - \mathcal M_{q,k}\Bigr| \leq r\}
$$
On a pour $r\geq 1$
\begin{equation}
\nu(F_r^c) \leq \sum_{k\geq 1} \nu_k ( F_{k,r}^c) \leq \sum_{k\geq 1}  c_0 e^{- c_1 k^{ \frac {2(d-1)}   q} r^2}k^{d-1}
\leq \sum_{k\geq 1} C_{q} e^{-c_{1} k^{\frac {2(d-1)} q} r^2/2} \leq C_{q} e^{- c_{1} r^2/2}
\end{equation}
Quitte \`a augmenter $C_{q}$ on a aussi  $1\leq C_{q}e^{-c_{1}r^2/2}$ pour tout $r\leq 1$.
La preuve du th\'eor\`eme~\ref{thm4} est compl\`ete. 
\end{proof}

On d\'emontre de la m\^eme fa\c con (en rempla\c cant le Th\'eor\`eme~\ref{th.lq} par le Th\'eor\`eme~\ref{thm0.2} le r\'esultat suivant (on renvoie \`a~\cite{VdK1} pour un r\'esultat similaire sur $\mathbb{S}^2$, avec une estim\'ee en $\log^2(k)$ au lieu de $\log^{1/2} (k)$).
\begin{thm}\label{thm7}$\exists C,c,c_{0}>0, \forall r\geq 0$
$$\nu(\{ B= (b_{k,l})_{k\in \mathbb{N}, l=1, \cdots N_k}\in \mathcal{B}; \exists k,l;  \|b_{k,l} \|_{L^\infty( \mathbb{S}^d)}   > c_0 \log^{1/2}( k)+ r\} ) \leq C e^{-cr^2}
$$
$$\widetilde{\nu}(\{ B= (b_{k,l})_{k\in \mathbb{N}, l=1, \cdots N_k}\in \widetilde{\mathcal{B}}; \exists k,l;  \|b_{k,l} \|_{L^\infty( \mathbb{S}^d)}   > c_0 \log^{1/2}( k)+ r\} ) \leq C e^{-cr^2}
$$
\end{thm}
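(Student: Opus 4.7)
The proof follows the blueprint of Theorem~\ref{thm4}, only substituting the $L^\infty$ estimate of Theorem~\ref{thm0.2} (or equivalently of Theorem~\ref{th.linfini}) for the $L^q$ estimate of Theorem~\ref{th.lq}. I restrict, as before, to the complex case, and to bases of $\oplus_{k\geq 1}E^k$ with the identification $E^k=E_{h_k}$ obtained from $h_k=k^{-1}$ and the choice of $a_h,b_h$ made in the proof of Theorem~\ref{thm4}. By Lemma~\ref{lem6.7} the quantity $e_{x,h_k}$ is exactly $N_k/\text{Vol}(\mathbb{S}^d)$, so Theorem~\ref{thm0.2} applies on $S_{h_k}$ with uniform constants.

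The key observation is the marginal law: for each fixed index $l_0\in\{1,\ldots,N_k\}$, the evaluation map
\[
B=(b_{l})_{l=1}^{N_k}\in U(N_k)\longmapsto b_{l_0}\in S_{h_k}
\]
pushes the Haar measure $\nu_k$ forward to the uniform probability $P_{h_k}$ on $S_{h_k}$ (since Haar measure is invariant under right multiplication by any permutation matrix, and the first column of a Haar-distributed unitary is uniform on the sphere). Hence Theorem~\ref{thm0.2} gives, for every $k\geq 2$, every $l_0$ and every $\Lambda\geq 1$,
\[
\nu_k\bigl(B\,:\,\|b_{l_0}\|_{L^\infty(\mathbb{S}^d)}>\Lambda\bigr)\leq C\,k^{c_1}e^{-c_2\Lambda^2}.
\]
Taking $\Lambda=c_0\log^{1/2}(k)+r$ with $r\geq 0$ and using $(a+b)^2\geq a^2+b^2$ for $a,b\geq 0$, this becomes
\[
\nu_k\bigl(B\,:\,\|b_{l_0}\|_{L^\infty}>c_0\log^{1/2}(k)+r\bigr)\leq C\,k^{c_1-c_2c_0^2}\,e^{-c_2r^2}.
\]

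I then perform a union bound over $l_0\in\{1,\ldots,N_k\}$, using $N_k\leq C k^{d-1}$ from~\eqref{dimension}, to obtain
\[
\nu_k\bigl(\exists\,l_0\,:\,\|b_{k,l_0}\|_{L^\infty}>c_0\log^{1/2}(k)+r\bigr)\leq C\,k^{d-1+c_1-c_2c_0^2}\,e^{-c_2r^2}.
\]
It suffices now to choose the universal constant $c_0$ large enough that $c_2c_0^2\geq d+c_1+1$, which makes the $k$-exponent at most $-2$. A summation over $k\geq 1$, together with the independence of the factors $\nu_k$, gives
\[
\nu\bigl(\exists\,k,l\,:\,\|b_{k,l}\|_{L^\infty}>c_0\log^{1/2}(k)+r\bigr)\leq \sum_{k\geq 1}Ck^{-2}e^{-c_2r^2}\leq C'e^{-cr^2},
\]
after possibly enlarging $C'$ to absorb the case $r\leq 1$. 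The real case is identical, replacing $U(N_k)$ by $O(N_k)$, $\nu_k$ by $\widetilde\nu_k$, and $S_{h_k}$ by $\widetilde S_{h_k}$, since Theorem~\ref{thm0.2} is stated in both settings.

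There is no substantial obstacle: the only point requiring care is calibrating $c_0$ against the polynomial loss $h_k^{-c_1}=k^{c_1}$ in Theorem~\ref{thm0.2} \emph{together with} the combinatorial factor $N_k\lesssim k^{d-1}$ coming from the union bound over the columns of a random unitary; both are killed by picking $c_0$ large enough so that the Gaussian factor $k^{-c_2c_0^2}$ from the $\log^{1/2}(k)$ shift provides summability in $k$ while preserving a clean Gaussian tail in $r$.
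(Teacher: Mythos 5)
Your proof is correct and follows exactly the route the paper indicates: the paper's own argument for Theorem~\ref{thm7} is just the remark that one replaces Theorem~\ref{th.lq} by Theorem~\ref{thm0.2} in the proof of Theorem~\ref{thm4}, and you carry this out faithfully, including the key marginal-law observation that the first column of a Haar-distributed unitary is uniform on the sphere. The one point you made explicit that the paper leaves implicit is the calibration of $c_0$ against both the polynomial loss $h_k^{-c_1}=k^{c_1}$ from Theorem~\ref{thm0.2} and the union-bound factor $N_k\lesssim k^{d-1}$ — this is precisely where the $\log^{1/2}(k)$ shift earns its keep — and your handling of small $k$ and small $r$ by enlarging constants is the standard and correct fix.
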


On d\'eduit aussi du th\'eor\`eme~\ref{thm4} le r\'esultat suivant.

\begin{thm}\label{thm.extra} La probabilit\'e qu'on puisse extraire d'une base orthonormale une sous suite born\'ee en norme $L^\infty$ est nulle: pour toute constante $C>0$,
 $$\nu(\{ B= (b_{k,l})_{k\in \mathbb{N}, l=1, \cdots N_k}\in \mathcal{B}; \liminf_{k\rightarrow+\infty} \inf_{l=1, \cdots, N_k}  \|b_{k,l} \|_{L^\infty( \mathbb{S}^d)} \leq C\}) =0
$$
$$\widetilde{\nu}(\{ B= (b_{k,l})_{k\in \mathbb{N}, l=1, \cdots N_k}\in \mathcal{B}; \liminf_{k\rightarrow+\infty} \inf_{l=1, \cdots, N_k} \|b_{k,l} \|_{L^\infty( \mathbb{S}^d)} \leq C\}) =0
$$
\end{thm}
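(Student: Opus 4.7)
The plan is to apply the Borel--Cantelli lemma to the random sequence $a_k(B) = \inf_{1 \leq l \leq N_k} \|b_{k,l}\|_{L^\infty}$. Fix $C > 0$. Since $\{\liminf_k a_k \leq C\}$ is contained in $\{a_k \leq C+1 \text{ for infinitely many } k\}$, it suffices to show that $\sum_k \nu_k\bigl(\exists\, l;\, \|b_{k,l}\|_{L^\infty} \leq C+1\bigr) < \infty$.

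First I would trade the $L^\infty$ bound for an $L^q$ bound by H\"older: $\|b_{k,l}\|_{L^q} \leq \mathrm{Vol}(\mathbb{S}^d)^{1/q}\|b_{k,l}\|_{L^\infty}$, so $\|b_{k,l}\|_{L^\infty} \leq C+1$ implies $\|b_{k,l}\|_{L^q} \leq K$ for a constant $K = K(C)$ independent of $k$ (for $q \geq 2$, say). The key step is then to choose an exponent $q = q(C)$ large but fixed so that the lower median bound~(\ref{est.med2}) yields $\mathcal{M}_{q,k} \geq 2K + 2(C+1)$ for all $k$ past some $k_0 = k_0(C)$. Since $N_k \sim c\,k^{d-1}$ with $d \geq 2$, the range condition $q_0 \leq q \leq \varepsilon_0 \log N_k$ of~(\ref{est.med2}) becomes automatic once $k$ is large, so such a choice is available.

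In that regime, the inclusion $\{\exists\, l;\, \|b_{k,l}\|_{L^\infty} \leq C+1\} \subseteq \{\exists\, l;\, |\|b_{k,l}\|_{L^q} - \mathcal{M}_{q,k}| > 2(C+1)\}$ holds, and I would invoke the concentration estimate~(\ref{eq.bon}) with $\Lambda = 2(C+1)$ to conclude
$$\nu_k\bigl(\exists\, l;\, \|b_{k,l}\|_{L^\infty} \leq C+1\bigr) \leq c_0\,k^{d-1}\,\exp\!\bigl(-4 c_1 (C+1)^2\,k^{2(d-1)/q}\bigr)$$
for $k \geq k_0$. Since $d \geq 2$ forces $k^{2(d-1)/q}$ to grow as a positive power of $k$, this tail decays faster than any polynomial in $1/k$ and is summable. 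Borel--Cantelli then gives $\nu(a_k \leq C+1 \text{ i.o.}) = 0$, hence $\nu(\liminf_k a_k \leq C) = 0$, for every $C > 0$. The real case is identical, replacing~(\ref{eq.bon}) by its $\widetilde{\nu}_k$ analogue.

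The main obstacle is simply matching constants: one must pick $q$ depending on $C$ so that the (fixed) $L^q$ bound $K(C)$ sits well below the median $\mathcal{M}_{q,k}$. Because $\mathcal{M}_{q,k} \gtrsim \sqrt{q}$ uniformly for $k$ in the allowed range while $K(C)$ is a constant, taking $q$ large enough in terms of $C$ settles this at once, and the finitely many small $k$ that fall outside the range of~(\ref{est.med2}) are harmless since they cannot affect the liminf.
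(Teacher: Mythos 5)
Your proposal is correct and takes essentially the same route as the paper: both fix $C$, pass from $L^\infty$ to $L^q$ by H\"older, choose $q$ large using the median lower bound (\ref{est.med2}) so that the median $\mathcal{M}_{q,k}$ clears the resulting $L^q$ bound by a fixed margin, then apply the concentration estimate (\ref{eq.bon}) and sum over $k$ to invoke Borel--Cantelli. The only cosmetic difference is that the paper phrases the conclusion directly as $\nu(\cap_L\cup_{k\geq L}A_{k,C})=0$ rather than via the $\liminf$--i.o. inclusion you spell out.
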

\begin{proof} On se limite au cas complexe. Notons $A_{k,C}$ l'\'ev\'enement de $U(N_{k})$ 
$$A_{k,C} =\{B= (b_{k,l}),\exists l\in \{1, \cdots, N_k\}, \quad \|b_{k,l} \|_{L^\infty( \mathbb{S}^d)} \leq C\}$$
D'apr\`es le th\'eor\`eme \ref{th.lq}, il existe $k_{C}$ et $q=q_{C}$ tels qu'on ait \`a la fois
$\text{Vol}(\mathbb{S}^d)^{1/q}\leq 2$ et $\mathcal M_{q,k}\geq 3C$ pour tout $k\geq k_{C}$.
Comme on a $\|u\|_{L^q(\mathbb{S}^d)} \leq \text{Vol}(\mathbb{S}^d)^{\frac 1 q} \|u\|_{L^\infty}$, il r\'esulte de (\ref{estlq}) qu'on a 
\be
\nu_{k}(A_{k,C})\leq 2N_{k}e^{-c_{1}N_{k}^{2/q}C^2} \quad \forall k\geq k_{C}
\ee
La s\'erie $\sum_{k }N_{k}e^{-c_{1}N_{k}^{2/q}C^2}$ \'etant convergente, il en r\'esulte
$\lim_{L\rightarrow \infty}\nu (\cup_{k\geq L}A_{k,C})=0$, donc
$\nu(\cap_{L}\cup_{k\geq L}A_{k,C})=0$. La preuve du th\'eor\`eme \ref{thm.extra}
est compl\`ete.

\end{proof}
\begin{remk}\label{rem.4}
Les r\'esultats de cette section restent vrais sur une vari\'et\'e riemannienne compacte quelconque, sous une forme plus faible. En effet, si on d\'ecompose $L^2(M)$ en somme directe:
$$ L^2(M) = \oplus_k E_{h_k}$$ o\`u les espaces $E_{h_k}$ correspondent \`a un choix de $b_h-a_h= M h$, la m\^eme preuve donnera que presque toute base hilbertienne de $L^2(M)$ respectant cette d\'ecomposition aura toutes ses normes $L^q$ uniform\'ement born\'ees. La principale diff\'erence avec le cas des sph\`eres \'etant que les \'el\'ements de $E_{h_k}$ ne sont plus des fonctions propres exactes, mais des fonctions propres approch\'ees: 
$$u \in E_{h_k} \Rightarrow \sqrt{ - \mathbf{\Delta}} u = h_k^{-1} u + O(1)_{L^2(M)}$$
Sur les vari\'et\'es de Zoll (o\`u toutes les g\'eod\'esiques sont p\'eriodiques), la r\'epartition en paquets des fonctions propres (voir~\cite{Be}) permet d'am\'eliorer cette propri\'et\'e 
$$u \in E_{h_k} \Rightarrow  - \mathbf{\Delta} u = \omega_k^2 u+O(1)_{L^2(M)}$$
\end{remk}
\subsection{Estim\'ees sur les tores $\mathbb{T}^d$}\label{sec.3.2}
Notons pour $k\in \mathbb{N}$, $E_k$ l'espace propre du laplacien sur le tore $\mathbb{T}^d$ associ\'e \`a la valeur propre $k$,  qu'on munit de la norrme $L^2(\mathbb{S}^d)$ et $N_k$ sa dimension,
$$N_k = \sharp J_k, \qquad J_k = \{n\in \mathbb{Z}^d; |n|^2 = k\}.$$
qui est le nombre de pr\'esentation de l'entier $k$ en somme de $d$ carr\'es d'entiers. On note 
$$\mathfrak{S}_d= \{ k\in \mathbb{N}; N_k\geq 1\},$$
le spectre du Laplacien et pour $k\in \mathfrak{S}_d$, on munit alors les sph\`eres unit\'es $\mathbb{S}_k$ des espaces $E_k$ de la probability uniforme, l'espace des suites d'\'el\'ements de $\mathbb{S}_k, k\in \mathfrak{S}_d$ de la probabilit\'e produit, et l'espace des bases orthonormales form\'ees de fonctions propres de la mesure de probabilit\'e naturelle comme pr\'ec\'edemment.
\subsubsection{Preuve de~\eqref{eq.mino-tores}}
 
On a clairement d'apr\`es la formule de Weyl
$$ \sum_{\frac n 2 < m\leq n } N_m =  C n^{\frac d 2} + O(n^{\frac{ d-1} 2})
$$ 
On en d\'eduit que pour tout $d\geq 2$ il existe une suite $n_p \rightarrow + \infty$ telle que 
\be\label{ttt}
N_{n_p} \geq c n_p ^{\frac{d-2} 2}
\ee
et en consid\'erant la fonction propre du laplacien sur $\mathbb{T}^k$ (de norme $L^2$ \'egale \`a $1$),
$$ u_p(x) =N_{n_p}^{-\frac 1 2}  \sum_{N= (n_1, \dots, n_k)\in E_{n_p} }e^{i N \cdot x} 
$$ on voit d'apr\`es l'injection de Sobolev que 
$$  c n_p^{\frac{d-2} 4} \sim \|u_p\|_{L^\infty} \leq C n_p^{\frac k {2r}} \|u_p\|_{L^r}
$$ 
Donc
$$
c n_p^{\frac{d-2} 4 - \frac k {2r}}  \leq  C\|u_p\|_{L^r}
$$
ce qui est~\eqref{eq.mino-tores}.
\subsubsection{Bases orthonormales sur les tores $\mathbb{T}^d$}
Il est clair si on choisit la base orthonormale de $E_k$ donn\'ee par 
$$B= \{ e_n (x)= \frac{1} {\sqrt{\text{Vol}(\mathbb{T}^d)}}e^{in\cdot x}, n\in J_k\},$$
qu'on a 
$$e_k(x) = \frac{1} {\text{Vol}(\mathbb{T}^d)} \sum_{n\in J_k} | e^{in\cdot x}|^2 = \frac{1} {\text{Vol}(\mathbb{T}^d)}\sum_{n\in{J_k}} 1= \frac{N_k} {\text{Vol}(\mathbb{T}^d)}.$$
On en d\'eduit que tous les r\'esultats des section~\ref{sec2.1} et~\ref{sec2.3} s'appliquent dans ce cadre. La seule diff\'erence par rapport au cas des sph\`eres est que  $\lim_{k\rightarrow+ \infty} N_k =+\infty$, n'est vrai que pour $d\geq 5$ (et alors $N_k \geq ck^{\frac d 2 -1}$ voir Grosswald~\cite[Chapter 12]{Gr}). La situation est donc un peu moins favorable et l'analogue des Th\'eor\`emes~\ref{thm4} et~\ref{thm7} n'est donc vrai que si $d\geq5$. En revanche, le r\'esultat plus faible suivant est vrai en toute dimension $d\geq 2$ (et la d\'emonstration est cons\'equence imm\'ediate des r\'esultats des sections~\ref{sec2.1}, ~\ref{sec2.3}) et de $\limsup_{k\rightarrow \infty}N_{k}=\infty$ d\`es que $d\geq 2$.
\begin{thm}
Consid\'erons l'espace des suites $U= (u_k)_{k\in \mathfrak{S}_d}; \forall k\in \mathfrak{S}_d,  u_k \in \mathbb{S}_k$ muni de sa mesure de probabilit\'e naturelle $\nu$. Alors 
$$\nu(\{ U; \exists C>0; \forall k, \|u_k\|_{L^\infty ( \mathbb{T}^d)} \leq C \}) =0
$$
\end{thm}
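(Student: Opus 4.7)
L'id\'ee est de reproduire la strat\'egie de la preuve du th\'eor\`eme~\ref{thm.extra}, qui concernait la sph\`ere, en s'adaptant au fait qu'ici on ne dispose que de $\limsup_{k\to\infty} N_k=\infty$ et non de $\lim_{k\to\infty} N_k=\infty$. Je commencerais par \'ecrire l'\'ev\'enement cible comme r\'eunion d\'enombrable
\[
\{U:\exists C,\ \forall k,\ \|u_k\|_{L^\infty(\mathbb{T}^d)}\leq C\}=\bigcup_{C\in\mathbb{N}} A_C,\qquad A_C=\{\forall k\in\mathfrak{S}_d,\ \|u_k\|_{L^\infty(\mathbb{T}^d)}\leq C\},
\]
de sorte qu'il suffit de d\'emontrer $\nu(A_C)=0$ pour chaque entier $C\geq 1$ fix\'e.

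Fixons donc $C\geq 1$, puis extrayons une sous-suite $(k_j)\subset \mathfrak{S}_d$ telle que $N_{k_j}\to\infty$ (possible car $\limsup_k N_k=\infty$ d\`es que $d\geq 2$). Le long de cette sous-suite, les r\'esultats de la section~\ref{sec2.3} s'appliquent, gr\^ace \`a l'identit\'e exacte $e_{x,h_k}=N_k/\text{Vol}(\mathbb{T}^d)$ (analogue du lemme~\ref{lem6.7}, d\'eduite de $|e^{in\cdot x}|^2=1$), qui remplace l'estim\'ee asymptotique du lemme~\ref{lem2.1} et rend inutile l'hypoth\`ese~\eqref{eq.mino}. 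Je choisirais ensuite $q=q_C$ ind\'ependant de $k$ assez grand pour que simultan\'ement $\text{Vol}(\mathbb{T}^d)^{1/q}\leq 2$ et $c_3\sqrt{q}\geq 3C$, o\`u $c_3$ est la constante de~\eqref{est.med2}. Pour $j$ assez grand on aura $q\leq \varepsilon_0\log N_{k_j}$, et~\eqref{est.med2} entra\^inera $\mathcal{M}_{q,k_j}\geq 3C$.

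L'in\'egalit\'e \'el\'ementaire $\|u\|_{L^q(\mathbb{T}^d)}\leq \text{Vol}(\mathbb{T}^d)^{1/q}\|u\|_{L^\infty(\mathbb{T}^d)}\leq 2\|u\|_{L^\infty(\mathbb{T}^d)}$ montre que, sur l'\'ev\'enement $\{\|u_{k_j}\|_{L^\infty}\leq C\}$, on a $\|u_{k_j}\|_{L^q}\leq 2C\leq \tfrac{2}{3}\mathcal{M}_{q,k_j}$, et donc $\bigl|\|u_{k_j}\|_{L^q}-\mathcal{M}_{q,k_j}\bigr|\geq C$. L'estim\'ee de concentration~\eqref{estlq} fournit alors
\[
P_{h_{k_j}}\bigl(\|u_{k_j}\|_{L^\infty(\mathbb{T}^d)}\leq C\bigr)\leq 2\exp\bigl(-c_1 N_{k_j}^{2/q} C^2\bigr),
\]
qui tend vers $0$ quand $j\to\infty$. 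Par ind\'ependance des coordonn\'ees de $U$ sous $\nu$, on obtient $\nu(A_C)\leq \prod_j P_{h_{k_j}}(\|u_{k_j}\|_{L^\infty}\leq C)=0$, les facteurs de ce produit infini tendant vers $0$. La r\'eunion d\'enombrable sur $C\in\mathbb{N}$ ach\`eve la preuve.

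Le point le plus d\'elicat \`a justifier est que l'hypoth\`ese $\lim_{h\to 0} N_h=\infty$ figurant dans l'\'enonc\'e du th\'eor\`eme~\ref{th.lq}, et donc dans la minoration~\eqref{est.med2}, peut ici \^etre remplac\'ee par $N_{k_j}\to\infty$ le long de la sous-suite extraite. C'est effectivement le cas puisque la preuve de~\eqref{est.med2} (via~\eqref{espmed1}--\eqref{aj2}) est essentiellement ponctuelle en $h$ et fournit en r\'ealit\'e une estim\'ee valide d\`es que $N_h$ est assez grand, pourvu qu'on dispose de l'encadrement uniforme de $e_{x,h}$, lequel est ici imm\'ediat gr\^ace \`a l'identit\'e exacte ci-dessus.
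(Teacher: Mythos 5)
Your proof is correct and fills in exactly the argument the paper is sketching when it says the result is an immediate consequence of the results of sections \ref{sec2.1} and \ref{sec2.3} together with $\limsup_{k\to\infty}N_k=\infty$. The decomposition into $A_C$, the extraction of a subsequence $(k_j)$ with $N_{k_j}\to\infty$, and the combination of the median lower bound~\eqref{est.med2} with the concentration inequality~\eqref{estlq} is precisely the mechanism used for the sphere in Theorem~\ref{thm.extra}, transposed to the torus where the exact identity $e_{x,h_k}=N_k/\text{Vol}(\mathbb{T}^d)$ plays the role of Lemma~\ref{lem6.7}. Your careful remark that~\eqref{est.med2} is really a pointwise statement in $h$ --- valid for any $h$ with $N_h$ above a uniform threshold, provided the $e_{x,h}$ control holds --- is exactly the observation needed to justify working along the subsequence rather than requiring $\lim_{h\to 0}N_h=\infty$. (Minor slop: on $\{\|u_{k_j}\|_{L^\infty}\leq C\}$ you get $|\,\|u_{k_j}\|_{L^q}-\mathcal M_{q,k_j}|\geq C$, so to invoke~\eqref{estlq} cleanly you should take $\Lambda=C/2$ say, which only changes the constant in the exponent; the conclusion is unaffected.)
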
 
\section{Application \`a la stabilisation des ondes}\label{sec4}

On s'interesse dans cette section \`a l'\'etude de la d\'ecroissance de
l'\'energie pour les solutions de l'\'equation des ondes amorties sur la vari\'et\'e riemannienne compacte connexe $(M,g)$:
\begin{equation}\label{5.1}
(\partial_t^2 - \mathbf{\Delta} ) u + 2a(x) \partial_t u=0 , \quad u \mid_{t=0}= u_0\in H^1(M), \partial_t u \mid_{t=0} = u_1 \in L^2 (M)
\end{equation}
o\`u $a\in C^\infty (M; [0,\infty[) $ est non identiquement nulle. Pour 
$u=(u_0,u_1)\in H^1(M)\times L^2 (M)$, l'\'energie de $u$  est 
\begin{equation}\label{5.2}
\mathcal{E}(u)={1\over 2 }\int_M( |\nabla_x u|^2 + |\partial _t u|^2) d_gx
\end{equation}
On a $\mathcal{E}((u_0,u_1))=0$ si et seulement si $u_1=0$ et $u_0(x)$ est une constante, 
et pour $u(t,x)$ solution de~\eqref{5.1},

\begin{equation}\label{5.3}
\mathcal{E}(u(0,.)-\mathcal{E}(u(t,.))=\int_0^t \int _M 2a(x) |\partial _s u(s,x) |^2 d_gx
\end{equation}

On renvoie \`a \cite{leb1} et \cite{Sj1} pour des r\'esultats g\'en\'eraux sur
la d\'ecroissance de l'\'energie et la th\'eorie spectrale de 
l'\'equation des ondes amorties, et \`a \cite{AL} pour des r\'esultats
num\'eriques. Rappelons simplement ici que d\`es que 
$a\not \equiv 0$, on  a pour toute solution $u$ de \eqref{5.1}
\begin{equation}\label{5.0}
\lim_{t \rightarrow + \infty} \mathcal{E}(u(t,.)) =0
\end{equation}
et que si le support de $a$ contr\^ole g\'eom\'etriquement la vari\'et\'e $M$, c'est \`a dire si toute g\'eod\'esique rencontre l'ensemble $\{x\in M; a(x) >0\}$, alors la d\'ecroissance de l'\'energie est uniforme (donc exponentielle):
\begin{equation}\label{5.00} \exists C, \epsilon >0, \ \ \forall u=(u_0, u_1) \in H^1(M) \times L^2(M), \qquad \mathcal{E}(u(t,.)) \leq C e^{- \epsilon t}\mathcal{E}(u(0,.))
\end{equation} 

Nous allons \'etudier ici la d\'ecroissance de l'\'energie d'un point de vue probabiliste, et uniform\'ement par rapport \`a la fr\'equence. Soit
$A_a$ l'op\'erateur non born\'e sur l'espace de Hilbert $H^1(M)\times L^2(M)$
\begin{equation}\label{5.4}
A_a=\begin{pmatrix}0 
&Id \\ 
\mathbf{\Delta} 
& -2a 
\end{pmatrix}, \ D(A_a)=H^2\oplus H^1
\end{equation}
On notera $U(t)=e^{tA_a}$, de sorte que la solution du syst\`eme~\eqref{5.1} avec donn\'ees
$(u_0,u_1)$ v\'erifie $(u(t,.),\partial_t u(t,.))=U(t)(u_0,u_1)$.
On a pour tout $t\in\mathbb R$, $U(t)(1,0)=(1,0)$, de sorte que $U(t)$ op\`ere
sur l'espace de Hilbert $H=H^1(M)/{\mathbb C 1}\oplus L^2(M)$, et on 
consid\'erera aussi $A_a$ donn\'e par \eqref{5.4} comme op\'erateur non born\'e
sur $H$. Ce passage au quotient est justifi\'e par le fait que bien que
la quantit\'e $\int_M u(t,.)d_gx$ ne soit pas
pr\'eserv\'ee pour les solutions de \eqref{5.1}, la moyenne de $u$
sur $M$ n'intervient pas dans le calcul de l'\'energie. Le produit scalaire sur $H$ est d\'efini par $(u\vert v)_\mathcal{E}={1\over 2}\int_M (\nabla_x u_0 \overline \nabla_x v_0 +u_1\overline v_1)d_gx$ avec $u=(u_0,u_1), v=(v_0,v_1)$ o\`u $u_0,v_0$ sont d\'efinis modulo des constantes, et
on a $\mathcal{E}(u)=(u\vert u)_\mathcal{E}$. On notera $\Vert u\Vert_\mathcal{E}=\sqrt{\mathcal{E}(u)}$.
Pour $u=(u_0,u_1)\in H$, on identifiera $u_0\in H^1(M)/{\mathbb C 1}$ avec
la fonction sur $M$, $\sum_{\omega_n>0} (\int_M u_{0}e_{n}(x)d_gx) e_n$, qui est
ind\'ependante du choix du repr\'esentant de $u_0\in H^1(M)$.

On choisit ici $0<a_h= c<c' = b_h$ et on rappelle que  
 $\widetilde E_{h}$ est le $\mathbb{R}$ sous espace vectoriel
de $H$ engendr\'e par les fonctions propres $e_n$ du laplacien sur $M$
avec $h\omega_n \in [c,c']$. Pour
$u_0=\sum_{\omega_n>0} u_{0,n}e_n, u_1=\sum_{\omega_n\geq 0} u_{1,n}e_n$, on notera aussi $\widetilde 
\Pi_h$ 
l'op\'erateur
born\'e de $H$ sur $\widetilde E_h$ 
\begin{equation}\label{5.5}
\widetilde\Pi_h(u)=(\sum_{h\omega_n \in I} u_{0,n}e_n, \sum_{h\omega_n \in I}
 u_{1,n}e_n)
\end{equation} 
et $i_h$ l'injection de $\widetilde E_h$ dans $H$. Alors 
$\widetilde\Pi_h$ est le projecteur orthogonal sur $\widetilde E_h$
 et $i_h$ est son adjoint. On prendra garde au fait que le 
sous-espace 
$\widetilde E_h$ de $H$ n'est pas invariant par le groupe d'\'evolution $U(t)$. Toutefois,
la notion d'\'echelle est conserv\'ee uniform\'ement en $t\geq 0$ par
le groupe $U(t)$. Plus pr\'ecis\'ement, on montrera \`a la fin de ce
paragraphe le th\'eor\`eme suivant.
\begin{thm}\label{thm5.0}
Soient $I=[c,c'], 0<c<c'$, et $r>0$ fix\'es.  Pour tout entier $N\geq 1$, il existe une constante $C_N$,
telle que pour tout $0<h<h'<1$ avec $ch'-c'h \geq rh$, 
 on a 
\begin{equation}\label{5.6}
\sup_{t\geq 0}\Vert \widetilde\Pi_h U(t)i_{h'}\Vert_\mathcal{E}
+\sup_{t\geq 0}\Vert \widetilde\Pi_{h'} U(t)i_h\Vert_\mathcal{E}
\leq C_N h^N 
\end{equation}
\end{thm}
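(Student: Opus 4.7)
My plan hinges on the observation that the hypothesis $ch'-c'h\geq rh$ forces a uniform spectral gap between the two bands: for every $n\in I_h$ and $m\in I_{h'}$ one has $\omega_n-\omega_m\geq r/h'$, so after rescaling by $h$ the support of $\widetilde\Pi_h$ lies in $[c,c']$ while the image of the support of $\widetilde\Pi_{h'}$ lies in $[ch/h',c'h/h']\subset[0,cc'/(c'+r)]$, with a gap of size $cr/(c'+r)$ independent of $(h,h')$. I would therefore fix, depending only on $c,c',r$, a smooth cutoff $\chi\in C^\infty_c(\mathbb R)$ with $\chi\equiv 1$ on $[c,c']$ and $\mathrm{supp}\,\chi\subset(cc'/(c'+r)+\delta,M)$ for some $\delta>0$ and $M>c'$. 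Extended diagonally to $H$, the operator $\chi(h\sqrt{-\mathbf\Delta})$ satisfies $\chi(h\sqrt{-\mathbf\Delta})\,\widetilde\Pi_h=\widetilde\Pi_h$ and $\chi(h\sqrt{-\mathbf\Delta})\,i_{h'}=0$ uniformly in the admissible range. The second term $\widetilde\Pi_{h'}U(t)i_h$ is treated with the same $h$-scaled cutoff, the roles of the two subspaces being swapped (now $\chi(hP)i_h=i_h$ and $\chi(hP)\widetilde\Pi_{h'}=0$).

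Writing $P=\sqrt{-\mathbf\Delta}$ and using $\chi(hP)i_{h'}=0$, one has $\widetilde\Pi_h U(t)i_{h'}=\widetilde\Pi_h[\chi(hP),U(t)]i_{h'}$ together with the Duhamel formula
\[
[\chi(hP),U(t)]=\int_0^t U(t-s)\,[\chi(hP),A_a]\,U(s)\,ds.
\]
Since $\chi(hP)$ commutes with $A_0$, only the damping contributes:
\[
[\chi(hP),A_a]=\begin{pmatrix}0 & 0 \\ 0 & -2[\chi(hP),a]\end{pmatrix},
\]
and by the $h$-semiclassical pseudodifferential calculus (appendix~\ref{sec5.2}) $[\chi(hP),a]$ is $O(h)$ in $L^2$-operator norm, with full Weyl symbol supported in $\mathrm{supp}\,\chi'\subset\mathbb R\setminus[c,c']$. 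To promote this into an $O(h^N)$ bound I would iterate with a nested family $\chi=\chi_0\succ\chi_1\succ\cdots\succ\chi_N$, each equal to $1$ on $[c,c']$ and with $\mathrm{supp}\,\chi_{k+1}\subset\{\chi_k\equiv 1\}$; inserting $\widetilde\Pi_h=\widetilde\Pi_h\chi_{k+1}(hP)$ on the left of the integrand and commuting, the ``short-circuit'' piece containing $\chi_{k+1}(hP)[\chi_k(hP),a]$ has identically vanishing symbol to every order by the disjoint-support rule, hence is $O(h^\infty)$, while the remaining deeper commutator re-enters the iteration.

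The main obstacle is the uniformity in $t\geq 0$: the iterated Duhamel bound naively yields $(Cht)^N/N!$, giving the desired $O(h^N)$ only for $t\lesssim h^{-1}$. To extend to all $t$, I would exploit the contractivity $\|U(t)\|_{\mathcal E}\leq 1$ combined with the semigroup property, writing $U(t)=U(t-T)U(T)$ with $T$ of order $h^{-1}$. The short-time analysis ensures that $(I-\widetilde\Pi_{h'})U(T)i_{h'}$ has $\mathcal E$-norm $O(h^N)$; substituting into $\widetilde\Pi_h U(t)i_{h'}$ and expanding along a Littlewood--Paley decomposition of $U(T)i_{h'}$, the contribution of each spectral shell $\widetilde E_{h_k}$ is controlled by an off-diagonal estimate of the same type (whose size decays rapidly as $h_k$ leaves the $h$-scale, by the symbol calculus), and these bounds should sum to $O(h^N)$ uniformly in $t-T$. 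Closing this bootstrap uniformly is in my view the most delicate point of the proof.
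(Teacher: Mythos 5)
Your proposal correctly identifies the spectral gap that the hypothesis $ch'-c'h\geq rh$ provides, and the reduction to estimating an off-diagonal block of $U(t)$ through frequency cutoffs is the right starting point (the paper does essentially the same reduction, using cutoffs $\phi(h\lambda),\psi(h\lambda)$ with disjoint essential supports). But the argument you sketch has a genuine gap at exactly the point you flag as ``the most delicate point'': the Duhamel expansion of $[\chi(hP),U(t)]$ inherently produces a factor of $t$ at each iteration, because $\chi(hP)$ is built from the \emph{free} generator $P=\sqrt{-\mathbf\Delta}$ and does not commute with the damped evolution. Even the ``short-circuit'' pieces $\chi_{k+1}(hP)[\chi_k(hP),a]$, while $O(h^\infty)$ in operator norm, sit inside a time integral of length $t$, so after $N$ iterations one gets $(Cht)^N/N!$, which is $O(h^N)$ only for $t\lesssim h^{-1}$. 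Your proposed bootstrap with $U(t)=U(t-T)U(T)$, $T\sim h^{-1}$, and a Littlewood--Paley re-decomposition of $U(T)i_{h'}$ does not close: the leading shell of $U(T)i_{h'}$ is again concentrated near scale $h'$, and controlling $\widetilde\Pi_h U(t-T)$ applied to it is the very quantity you are trying to bound; iterating the splitting $\lfloor t/T\rfloor$ times accumulates constants and gives no uniform bound.

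The paper avoids this entirely by constructing the cutoff from the \emph{full damped generator} rather than from $P$. After the semiclassical reduction $\tau U(t)\tau^{-1}=e^{itB/h}$ with $B=-ih\tau A_a\tau^{-1}$, one defines $\Theta_h=\frac{1}{2i\pi}\int_\gamma f_\varepsilon(z)(z-B)^{-1}dz$, a smoothed contour integral of the resolvent of $B$ around $\pm[c,c']$; since $\Theta_h$ is a function of $B$ it \emph{exactly commutes} with $e^{itB/h}$, so there is no Duhamel term and no loss in $t$ whatsoever. The identities $\phi(h\lambda)=\phi(h\lambda)\Theta_h+R_{1,h}$ and $\Theta_h\psi(h\lambda)=R_{2,h}$, with $\Vert R_{j,h}\Vert=O(h^N)$, then come from an $h$-pseudodifferential parametrix $\sum_k(h/i)^kQ_{h,k}(z)$ for $(z-B)^{-1}$, valid because the spectrum of $B$ lies in the thin strip $\{0\le\text{Im}\,z\le 2h\Vert a\Vert_\infty\}$ and the contour $\gamma$ stays at distance $\sqrt{\varepsilon}=h^{\nu/2}$ from it. Combined with $\Vert U(t)\Vert_\mathcal{E}\le 1$, this gives $\phi(h\lambda)U(t)\psi(h\lambda)=\phi(h\lambda)U(t)R_{2,h}+R_{1,h}U(t)\psi(h\lambda)=O(h^N)$ uniformly in $t\ge 0$. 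This is the idea your approach is missing: to get a cutoff that travels freely through $U(t)$, you must build it from the resolvent of the (non-self-adjoint) operator that generates $U(t)$, not from the free Laplacian.
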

\subsection{Amortissement presque sur}
Soit $S^*(M)=\{(x,\xi)\in T^*M, \ \vert \xi_x\vert=1\}$ le fibr\'e
cotangent unitaire et $\phi(t,\rho)$ le flot g\'eod\'esique sur $S^*(M)$. 
Soit $\underline a(t,\rho)$ la moyenne de la fonction $a$ \`a
l'instant $t$ le long du flot
\begin{equation}\label{5.7}
\underline a(t,\rho)={1\over t}\int_0^ta(x(\phi(s,\rho)))ds
\end{equation} 
La mesure de Liouville  canonique $d\mu$ sur $S^*(M)$ \'etant invariante
par le flot g\'eod\'esique,  la fonction
\begin{equation}\label{5.8}
Bir(\rho)=\lim_{t\rightarrow \infty}\underline a(t,\rho)
\end{equation}
existe $\mu$-presque partout d'apr\`es le th\'eor\`eme ergodique de 
Birkhoff.

On notera $\widetilde S_h$ la sphere unit\'e de $\widetilde E_h$, et
$\widetilde P_h$ la probabilit\'e uniforme sur $\widetilde S_h$. Comme application simple
des r\'esultats de la section \ref{sec2}, on a le th\'eor\`eme
suivant qui relie le comportement de la fonction de Birkhoff \`a
la d\'ecroissance de l'\'energie.

\begin{thm}\label{thm5.1}
On suppose  que la fonction de Birkhoff v\'erifie
\begin{equation}\label{5.9}
\mu(\rho\in S^*(M), \ Bir(\rho)=0)=0
\end{equation}
Alors pour tout $\varepsilon>0, \alpha\in ]0,1]$, il existe un temps $T>0$,
tel que
\begin{equation}\label{5.10}
\forall {h\in ]0,1]}, \widetilde P_h\Big(u\in \widetilde S_h, \ \mathcal{E}(U(T)u)<\varepsilon  \Big)\geq 1-\alpha
\end{equation}
\end{thm}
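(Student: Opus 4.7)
L'id\'ee principale combine trois ingr\'edients : le th\'eor\`eme d'Egorov semi-classique pour relier la dissipation d'\'energie \`a l'int\'egrale de $a$ le long du flot g\'eod\'esique, le th\'eor\`eme ergodique de Birkhoff combin\'e \`a l'hypoth\`ese \eqref{5.9} pour assurer que cette int\'egrale diverge sur un ensemble de mesure pleine, et l'estim\'ee de concentration probabiliste utilis\'ee dans la preuve du th\'eor\`eme \ref{thm2.1}. On d\'ecompose l'argument selon la taille de $h$ et on prendra finalement $T=\max(T_{1},T_{2})$ avec $T_{1}, T_{2}$ issus des deux r\'egimes, en utilisant la d\'ecroissance monotone de l'\'energie le long du flot.

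\textbf{R\'egime $h\in [h_{0},1]$.} Pour tout $h_{0}>0$ fix\'e, la r\'eunion des sous-espaces $\widetilde{E}_{h}$ pour $h\in [h_{0},1]$ est contenue dans un m\^eme sous-espace $F_{h_{0}}$ de $H$ de dimension finie (engendr\'e par les $e_{n}$ avec $\omega_{n}\leq c'/h_{0}$). La sph\`ere unit\'e de $F_{h_{0}}$ pour $\mathcal{E}$ \'etant compacte, la propri\'et\'e g\'en\'erale \eqref{5.0} y est uniforme: il existe $T_{1}=T_{1}(\varepsilon,h_{0})$ tel que $\mathcal{E}(U(T_{1})u)<\varepsilon$ pour tout $u\in F_{h_{0}}$ v\'erifiant $\Vert u\Vert_{\mathcal{E}}=1$. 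Ce r\'egime est donc r\'egl\'e avec $\widetilde{P}_{h}$-probabilit\'e $1$.

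\textbf{R\'egime $h\in (0,h_{0}]$.} Fixons $T>0$ et utilisons la d\'ecomposition en demi-ondes $v_{\pm}=\partial_{t}u\mp i\sqrt{-\mathbf{\Delta}}u$, ainsi que le th\'eor\`eme d'Egorov semi-classique (valable uniform\'ement pour $T\lesssim \log(1/h)$). \`A la limite $h\to 0$, chaque demi-onde se propage le long du flot g\'eod\'esique en \'etant amortie par un facteur $e^{-2\int a\,ds}$. Plus pr\'ecis\'ement, pour $u\in \widetilde{E}_{h}$ avec $\Vert u\Vert_{\mathcal{E}}=1$, on \'etablit
\[
\mathcal{E}(U(T)u)= \bigl(\mathrm{Op}_{h}(\theta_{T})w_{0}\,\big|\,w_{0}\bigr) + R_{h,T}(u),
\]
o\`u $\theta_{T}(\rho)= e^{-2T\bar{a}(T,\rho)}$ avec $\bar{a}(T,\rho)=\frac{1}{T}\int_{0}^{T}a(x(\phi(s,\rho)))\,ds$, $w_{0}$ est une renormalisation appropri\'ee des donn\'ees initiales, et $\sup_{u\in\widetilde{S}_{h}}|R_{h,T}(u)|\to 0$ lorsque $h\to 0$ ($T$ fix\'e).

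\textbf{Conclusion par Birkhoff et concentration.} Sous l'hypoth\`ese \eqref{5.9}, le th\'eor\`eme de Birkhoff donne $e^{-2T\bar{a}(T,\rho)}\to \mathbf{1}_{Bir(\rho)=0}$ $\mu$-presque partout lorsque $T\to\infty$; par convergence domin\'ee, la moyenne $m_{\theta_{T}}$ de $\theta_{T}$ contre la mesure de Liouville normalis\'ee sur la couronne $\{\vert\xi\vert_{x}\in I\}$ tend vers $0$. On choisit alors $T_{2}$ tel que $m_{\theta_{T_{2}}}\leq \varepsilon/4$. Pour ce $T_{2}$ fix\'e, l'estim\'ee de concentration \eqref{2.2.12ter} appliqu\'ee \`a la forme quadratique $u\mapsto (\mathrm{Op}_{h}(\theta_{T_{2}})w_{0}\vert w_{0})$ sur $\widetilde{S}_{h}$ montre que cette forme est \`a distance $\leq \varepsilon/4$ de $m_{\theta_{T_{2}}}$ avec $\widetilde{P}_{h}$-probabilit\'e $\geq 1-\alpha$, d\`es que $h\leq h_{0}$ pour $h_{0}$ assez petit. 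La principale difficult\'e r\'eside dans l'\'etablissement pr\'ecis du th\'eor\`eme d'Egorov amorti jusqu'aux temps grands (de l'ordre du temps d'Ehrenfest), ainsi que dans la gestion de la non-invariance des $\widetilde{E}_{h}$ sous $U(t)$, pour laquelle le th\'eor\`eme \ref{thm5.0} est crucial.
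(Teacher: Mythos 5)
Your proof is correct and follows essentially the same route as the paper: half-wave decomposition $j(u)=(v_0,v_1)$, approximation of the evolution by $e^{t\mathcal D}$, Egorov giving principal symbol $e^{-2t\underline a(t,\rho)}$, Birkhoff plus dominated convergence to make the Liouville average small for $T$ large, concentration of measure on $\widetilde S_h$ for $h$ small, and compactness of a fixed finite-dimensional sphere together with monotone energy decay for $h$ bounded below. Two small over-complications worth flagging: since $T$ is fixed in the theorem~\ref{thm5.1}, the classical (finite-time) Egorov theorem suffices, and the Ehrenfest-time refinement you mention is only required for the theorem~\ref{thm5.2}; moreover the paper's proof of the theorem~\ref{thm5.1} does not invoke the theorem~\ref{thm5.0} at all --- the non-invariance of $\widetilde E_h$ under $U(t)$ is absorbed into the remainder~\eqref{5.15}, which is $O(\|v\|_{H^{-1}})=O(h)$ uniformly on $[0,T]$ for $v$ on the unit sphere of $E_h\oplus E_h$.
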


\begin{remk}On remarquera que le temps $T$ \`a partir duquel on demande 
que l'\'energie soit plus petite que $\varepsilon$ est ind\'ependant de $h\in ]0,1]$, de sorte que l'estimation \eqref{5.10} est diff\'erente de la simple estimation \eqref{5.0}.  Dans le cas o\`u
$M$ est une  sph\`ere, et la fonction $a$ identiquement nulle dans un voisinage de l'\'equateur, l'hypoth\`ese \eqref{5.9} est viol\'ee.  On laisse au lecteur le soin de prouver dans ce cas que l'estimation \eqref{5.10}
est fausse pour $\alpha$ et $\varepsilon $ petits. 
\end{remk}

\begin{thm} \label{thm5.2}
Dans le cas o\`u  le flot est ergodique, ce qui implique que $\mu$ presque surement 
$$\text{Bir} (\rho) = \frac{ \int_M a(x) dx } { \text{Vol} (M)}=\overline a>0$$ et donc \eqref{5.9} est v\'erifi\'ee, on suppose de plus  l'estim\'ee quantitative suivante sur le caract\`ere exponentiellement m\'elangeant du flot: 
\begin{equation}\label{eq.disp}
\forall \delta >0, \exists S( \delta)>0; \mu ( \rho\in S^*M; |\underline a (t, \rho ) - \overline a| > \delta) \leq e^{-t S( \delta)}
\end{equation}
Alors il existe  $\mathcal{M}, \alpha, C, c >0,$ tel que pour tout $h \in ]0,1]$, 
\begin{equation}
  P_h(\sup_{t\in [0, \epsilon \log(1/h)]}(\mathcal{E}(U(t)u) e^{t\alpha} )>\mathcal{M}+r)\leq C e^{-ch^{-d+1}r^2} 
\end{equation}
\end{thm}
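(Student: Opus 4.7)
The idea is to reduce everything to a semiclassical formula for $\mathcal{E}(U(t)u)$, then to the ergodic average of the damping along the flow, and finally to apply the concentration of measure on the sphere $\widetilde S_h$. First I would treat the deterministic side. Using the energy identity (\ref{5.3}) together with equipartition of kinetic and potential energy on frequency-localized data (which follows from Theorem \ref{thm5.0}, since $U(t)$ essentially preserves the window $\widetilde E_h$ modulo $\mathcal{O}(h^\infty)$), the map $u\mapsto \mathcal{E}(U(t)u)$ on $\widetilde S_h$ can be written as $(Q_h(t)u\vert u)_\mathcal{E}$ for a self-adjoint operator $Q_h(t)$. Standard semiclassical/Egorov analysis for the damped wave equation, pushed up to an Ehrenfest time $\varepsilon\log(1/h)$ (with $\varepsilon$ chosen small), shows that $Q_h(t)$ is, modulo a negligible error, a $h$-pseudodifferential operator of order $0$ with principal symbol
\[
q_t(x,\xi)=\exp\!\Big(-2\int_0^t a(x(\phi_s(\rho)))\,ds\Big)=\exp(-2t\,\underline{a}(t,\rho)),\qquad \rho=(x,\xi/|\xi|_x),
\]
cut off to the corona $\{|\xi|_x\in I\}$. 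This is the main technical step and will be the principal obstacle: one must control the Egorov remainder up to a logarithmic time, exactly as in the classical estimates for quantum ergodicity on $M$.

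Next I would compute the expectation using Lemma~\ref{lem2.2.1} / Theorem~\ref{thm2.1}. This gives
\[
\mathbb{E}_h\bigl[\mathcal{E}(U(t)u)\bigr]= m_{q_t}+\mathcal{O}\bigl(h^{-d+1}/N_h\bigr)
= \frac{1}{\mu(\{|\xi|_x\in I\})}\int_{|\xi|_x\in I} e^{-2t\underline{a}(t,\rho)}\,d\mu(\rho)+o(1).
\]
Splitting the last integral along $\{|\underline{a}(t,\cdot)-\overline{a}|\leq \delta\}$ and its complement and using the quantitative mixing hypothesis (\ref{eq.disp}) gives
\[
\mathbb{E}_h\bigl[\mathcal{E}(U(t)u)\bigr]\leq C\bigl(e^{-2t(\overline{a}-\delta)}+e^{-t S(\delta)}\bigr)\leq C e^{-2\alpha t}
\]
for a suitable $\delta$ small enough and some $\alpha>0$. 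Hence for some universal constant $\mathcal{M}_0$, the median $M_h(t)$ of $u\mapsto e^{t\alpha}\mathcal{E}(U(t)u)$ satisfies $M_h(t)\leq \mathcal{M}_0$ uniformly in $t\in[0,\varepsilon\log(1/h)]$.

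Finally I would pass to a probabilistic deviation via Lévy's concentration (Proposition~\ref{concentration}). Since $U(t)$ is a contraction on $(H,\Vert\cdot\Vert_\mathcal{E})$, the map $F_t:u\mapsto \mathcal{E}(U(t)u)$ is Lipschitz on $\widetilde S_h$ with constant $\leq 2$, hence $G_t(u)=e^{t\alpha}F_t(u)$ is Lipschitz with constant $\leq 2e^{t\alpha}\leq 2h^{-\varepsilon\alpha}$. Because $\dim_{\mathbb{R}}\widetilde E_h=N_h\gtrsim h^{-d}$, concentration on the sphere gives
\[
\widetilde P_h\bigl(|G_t(u)-M_h(t)|>r\bigr)\leq 2\exp\!\bigl(-c\,h^{-d+2\varepsilon\alpha}r^2\bigr),
\]
and taking $\varepsilon$ small so that $2\varepsilon\alpha\leq 1$ yields the exponent $h^{-d+1}$. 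To handle the supremum over $t\in[0,\varepsilon\log(1/h)]$, I would discretize with step $\Delta t=h^N$ (for $N$ large); using that $|G'_t(u)|\leq Ce^{t\alpha}\leq Ch^{-\varepsilon\alpha}$ on $\widetilde S_h$, the discretization error is $\mathcal{O}(h^{N-\varepsilon\alpha})$, hence negligible, while the union bound contributes only a polynomial factor $h^{-N}|\log h|$ that is absorbed into the exponential concentration bound. Combining the three steps and setting $\mathcal{M}=\mathcal{M}_0+1$ gives the stated estimate.
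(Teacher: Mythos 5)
Your deterministic reduction is right and matches the paper: use the semiclassical conjugation $v=j(u)$, push Egorov's theorem up to an Ehrenfest time $\epsilon\log(1/h)$ (the symbols then live in a mildly exotic class, which is indeed the technical heart), and use the quantitative mixing hypothesis~\eqref{eq.disp} to bound $\mathbb{E}_h[\mathcal{E}(U(t)u)]\leq Ce^{-2\sigma t}+Ch^{\delta}$ uniformly for $t\leq\epsilon\log(1/h)$. Your fixed-$t$ concentration estimate (Lipschitz constant $\leq 2e^{t\alpha}$, dimension $\sim h^{-d}$) is also correct.

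The gap is in the last step. You pass from the fixed-$t$ bound to the supremum in $t$ by a union bound over a grid of step $\Delta t=h^{N}$, i.e.\ $\sim h^{-N}|\log h|$ grid points, and assert that this polynomial factor ``is absorbed into the exponential concentration bound.'' That absorption only works for $r$ bounded away from $0$. The theorem is stated for all $r\geq 0$ (with uniform constants $C,c$), and it is a genuinely nontrivial statement as soon as $r^2\gtrsim h^{d-1}$, since $Ce^{-ch^{-d+1}r^2}<1$ there. In the regime $h^{d-1}\lesssim r^2\lesssim h^{d-1}\log(1/h)$ one has $e^{-ch^{-d+1}r^2}=e^{-O(\log(1/h))}$, i.e.\ only a fixed polynomial decay in $h$, and no choice of $C',c'$ makes $h^{-N}|\log h|\,e^{-ch^{-d+1}r^2}\leq C'e^{-c'h^{-d+1}r^2}$ uniformly in $h$. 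So the union-bound route does not yield the claimed estimate as stated; it only proves it for $r\geq r_0>0$.

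The paper sidesteps the supremum entirely by exploiting the monotonicity of the energy. It introduces the single scalar random variable
\[
g(u)=\int_0^{\epsilon\log(1/h)} f_t(u)\,e^{\sigma t}\,dt,\qquad f_t(u)=\mathcal{E}(U(t)j^{-1}u),
\]
whose median is $\leq C\int_0^{\epsilon\log(1/h)}e^{\sigma t}\bigl(e^{-2\sigma t}+h^{\delta}\bigr)\,dt=\mathcal{O}(1)$ for $\epsilon$ small, and whose Lipschitz constant is $\leq C\int_0^{\epsilon\log(1/h)}e^{\sigma t}\,dt\leq Ch^{-1/2}$. Concentration then gives $P_h(g>\mathcal{M}+r)\leq Ce^{-ch^{-d+1}r^2}$ for \emph{all} $r\geq 0$, with no grid and no union bound. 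The crucial observation is that since $t\mapsto f_t(u)$ is nonincreasing, $g(u)\leq A$ forces $\int_0^t f_s e^{\sigma s}\,ds\leq A$ and hence $f_t\leq \sigma A/(e^{\sigma t}-1)$ for every $t$, which, together with $f_t\leq f_0=\tfrac12$ near $t=0$, gives a \emph{uniform} bound $\sup_t e^{\alpha t}f_t\lesssim A$ for $\alpha<\sigma$. So a single deviation estimate on $g$ controls the whole trajectory. This is the missing ingredient in your argument, and it cannot be replaced by a union bound with the same quality of constants.
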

\begin{remk} L'hypoth\`ese est v\'erifi\'ee dans le cas o\`u la vari\'et\'e $M$ est une surface de courbure n\'egative constante et l'amortissement n'est pas trivial (voir Kifer~\cite{Ki} et Anantharaman~\cite[Theorem 3.1]{An}.
\end{remk}  
\begin{proof} On commence par d\'emontrer le Th\'eor\`eme~\ref{thm5.1}. Pour cela on va v\'erifier
le lemme suivant.
\begin{lem}\label{lem5.1}
On suppose que l'hypoth\`ese \eqref{5.9} est satisfaite. Il existe $C,c>0$
et pour tout $\varepsilon>0$, $T_\varepsilon$ et
$h_\varepsilon$ tels 
qu'on ait pour tout $h\in ] 0 ,h_\varepsilon]$
\begin{equation}\label{5.11}
\widetilde P_h\Big(u\in \widetilde S_h, \ \mathcal{E}(U(T_\varepsilon)u)<\varepsilon  \Big)\geq 1-Ce^{-ch^{-d}\varepsilon^2}
\end{equation}
\end{lem}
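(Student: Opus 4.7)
\textbf{Plan de preuve du lemme~\ref{lem5.1}.}

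L'id\'ee est d'\'ecrire $\mathcal{E}(U(T)u)$ comme la forme quadratique associ\'ee \`a un op\'erateur semi-classique sur $\widetilde{E}_h$ dont on calcule explicitement le symbole principal, puis d'appliquer les r\'esultats de concentration de la section~\ref{sec2}.

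\emph{\'Etape 1 (analyse semi-classique de $B_T:=i_h^*U(T)^*U(T)i_h$).} On pose $\Lambda=\sqrt{-\mathbf{\Delta}}$ et on d\'ecompose $u=(u_0,u_1)\in\widetilde{E}_h$ en ses modes $u^\pm=\tfrac12(u_0\pm i\Lambda^{-1}u_1)$ qui, gr\^ace \`a la localisation fr\'equentielle de $\widetilde{E}_h$, satisfont \`a l'ordre principal l'\'equation amortie scalaire $(\partial_t\pm i\Lambda + a)u^\pm=0$. Un th\'eor\`eme d'Egorov adapt\'e (voir~\cite{leb1},~\cite{Sj1}) assure alors que $u^\pm(T,\cdot)$ se d\'eduit de $u^\pm(0,\cdot)$ par transport le long du flot g\'eod\'esique $\phi_{\mp t}$, multipli\'e par le facteur d'amortissement $\exp\bigl(-\int_0^T a\circ\pi\circ\phi_{\mp s}\,ds\bigr)=e^{-T\underline{a}(T,\cdot)}$, modulo un reste $O_T(h)$ en norme $L^2$. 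En mettant au carr\'e et en changeant de variable (invariance de Liouville sous le flot), on obtient
\begin{equation*}
\mathcal{E}(U(T)u)=(B_T u\vert u)_\mathcal{E}+O_T(h)\Vert u\Vert_\mathcal{E}^2,
\end{equation*}
o\`u $B_T$ est un op\'erateur $h$-pseudodiff\'erentiel sur $\widetilde{E}_h$ dont le symbole principal sur $\{(x,\xi):h\vert\xi\vert_x\in[c,c']\}$ est $\sigma_0(B_T)(\rho)=e^{-2T\underline{a}(T,\rho)}$ (fonction homog\`ene de degr\'e $0$ en $\xi$, d\'efinie sur $S^*M$).

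\emph{\'Etape 2 (choix de $T_\varepsilon$ via le th\'eor\`eme de Birkhoff).} L'hypoth\`ese~\eqref{5.9} entra\^ine $\mathrm{Bir}(\rho)>0$ pour $\mu$-presque tout $\rho\in S^*M$ ; comme $\underline{a}(T,\rho)\to\mathrm{Bir}(\rho)$ presque partout, on a $T\underline{a}(T,\rho)\to+\infty$ et donc $e^{-2T\underline{a}(T,\rho)}\to 0$ presque partout. Par convergence domin\'ee (majoration uniforme par $1$ sur l'espace de mesure finie $S^*M$),
\begin{equation*}
m(T):=\frac{1}{\mathrm{Vol}(S^*M)}\int_{S^*M}e^{-2T\underline{a}(T,\rho)}\,d\mu(\rho)\xrightarrow[T\to+\infty]{}0.
\end{equation*}
On fixe alors $T_\varepsilon$ tel que $m(T_\varepsilon)\leq\varepsilon/4$.

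\emph{\'Etape 3 (esp\'erance, concentration et conclusion).} Puisque $a_h=c$ et $b_h=c'$ sont constants, on a $(b_h-a_h)/h\to+\infty$, et le lemme~\ref{lem2.2.1} (ou sa variante pour l'espace d'\'energie $H$, la preuve \'etant identique par cyclicit\'e de la trace) appliqu\'e \`a $B_{T_\varepsilon}$ donne
\begin{equation*}
\mathbb{E}_h\bigl((B_{T_\varepsilon}u\vert u)_\mathcal{E}\bigr)=m(T_\varepsilon)+O_{T_\varepsilon}(h)\leq\varepsilon/2
\end{equation*}
pour $h\leq h_\varepsilon$ assez petit. La fonction $F(u)=\mathcal{E}(U(T_\varepsilon)u)$ est lipschitzienne sur $(\widetilde{S}_h,\Vert\cdot\Vert_\mathcal{E})$ de constante au plus $2\Vert B_{T_\varepsilon}\Vert\leq 2$ (la d\'ecroissance de l'\'energie impose $B_T\leq I$). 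La concentration de la mesure (proposition~\ref{concentration}, comme en~\eqref{2.2.12bis}), combin\'ee \`a $N_h\geq\alpha h^{-d}(c'-c)$ d'apr\`es~\eqref{eq.mino2}, donne avec $r=\varepsilon/2$ :
\begin{equation*}
\widetilde{P}_h\bigl(\vert F(u)-\mathbb{E}_hF\vert\geq\varepsilon/2\bigr)\leq 2e^{-c_1 N_h\varepsilon^2}\leq Ce^{-ch^{-d}\varepsilon^2}.
\end{equation*}
Sur l'\'ev\'enement compl\'ementaire on a $\mathcal{E}(U(T_\varepsilon)u)\leq\mathbb{E}_hF+\varepsilon/2\leq\varepsilon$, ce qui \'etablit~\eqref{5.11}.

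\emph{Principale difficult\'e.} Le point d\'elicat est l'\'etape~1 : il s'agit de justifier rigoureusement le th\'eor\`eme d'Egorov pour l'\'equation des ondes amorties, c'est-\`a-dire la description semi-classique du groupe d'\'evolution $U(T)$ sur $\widetilde{E}_h$ et le calcul du symbole principal de $B_T$. Le temps $T_\varepsilon$ pouvant \^etre grand (et d\'ependant de la vitesse de convergence dans le th\'eor\`eme de Birkhoff), il faut veiller \`a ce que le reste $O_{T_\varepsilon}(h)$ soit effectivement absorb\'e, ce qui n\'ecessite de choisir $h_\varepsilon$ en fonction de $T_\varepsilon$, et donc de $\varepsilon$.
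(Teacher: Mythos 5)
Your plan follows the paper's own proof essentially step for step: the same mode decomposition reducing to the scalar damped operators $d_\pm=\pm i\lambda-a$ (the paper's isometry $j$), Egorov to identify the symbol $e^{-2T\underline a(T,\rho)}$ with an $O_T(h)$ remainder, the trace computation of Lemma~\ref{lem2.2.1} for the expectation, Birkhoff's theorem under hypothesis~\eqref{5.9} to make $\int_{S^*M}e^{-2T\underline a}d\mu$ small for $T=T_\varepsilon$ large, and L\'evy concentration with the uniform Lipschitz bound on $u\mapsto\mathcal{E}(U(T)u)$ to get the $e^{-ch^{-d}\varepsilon^2}$ tail, fixing $h_\varepsilon$ last to absorb the $T_\varepsilon$-dependent remainder. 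The only cosmetic difference is that you concentrate around the expectation rather than the median, which the paper handles via its estimate~\eqref{2.2.12b}.
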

\begin{proof}
Soit $\lambda=\sqrt {-\mathbf{\Delta}}$ et $j$ l'application de 
$H^1(M)\oplus L^2(M)$ dans $L^2(M)\oplus L^2(M)$ d\'efinie par
\begin{equation}\label{5.12}
j(u)=v \Longleftrightarrow v_0={\lambda u_0-iu_1\over \sqrt 2}, \ 
v_1={\lambda u_0+iu_1\over \sqrt 2}
\end{equation}
On remarquera que $j$ est d\'efinie sur $H=H^1(M)/{\mathbb C 1}\oplus L^2(M)$
et induit une isom\'etrie de
$\widetilde E_h$ sur $E_h\oplus E_h$.
Posons $d_\pm=\pm i\lambda-a$,  
$\mathcal D=
\begin{pmatrix}
d_+ &0 \\ 
0 & d_-
\end{pmatrix}$ et $\mathcal R=
\begin{pmatrix}
0 &a \\ 
a& 0
\end{pmatrix}$. Pour $u\in \widetilde E_h$, soit $v=j(u)$ et 
$v(t)=j(U(t)u)$. On v\'erifie ais\'ement que $v(t)$ v\'erifie l'\'equation 
\begin{equation}\label{5.13}
\partial_t v(t)=(\mathcal D + \mathcal R)v(t)
\end{equation}
Les op\'erateurs $d_\pm$ sont des op\'erateurs pseudo-diff\'erentiels classiques elliptiques de degr\'e $1$, on $d_+^*=d_-$ (adjoint sur $L^2$) et comme $a\not \equiv 0$ ils sont  bijectifs 
de $H^{s+1}(M)$ sur $H^s(M)$ pour tout $s\in \mathbb R$.
Comme $Re(\mathcal D)=-aId$, on a 
$\sup_{t\geq 0}\Vert e^{t\mathcal D}\Vert_{L^2}\leq 1$.
De plus, comme $\mathcal D$ commute \`a $e^{t\mathcal D}$
et que pour $k$ entier la norme $\Vert \mathcal D^k v\Vert_{L^2}$ est \'equivalente \`a la norme $H^k$, pour tout $s\in \mathbb R$, il existe $C_s$
tel que 
\begin{equation}\label{5.14}
\sup_{t\geq 0}\Vert e^{t\mathcal D}\Vert_{H^s(M)}\leq C_s 
\end{equation}
Rappelons que la solution $v(t)$ de \eqref{5.13} avec $v(0)=v$ v\'erifie
\begin{equation}\label{5.15}
\ba
& v(t)=e^{t\mathcal D}v+r(t,v)\\
&\sup_{t\in [0,T]}\Vert r(t,v)\Vert_{L^2}\leq C_T\Vert v \Vert_{H^{-1}}
\ea\end{equation}
La preuve de \eqref{5.15} est standard: on commence par v\'erifier que si 
$b_{-1}$ est un op\'erateur pseudo-diff\'erentiel de degr\'e $-1$ et de symbole
principal $a(x)\vert\xi\vert_x^{-1}/{2i}$, alors en posant  
$I=Id+B_{-1}$, $B_{-1}=\begin{pmatrix}
0 &b_{-1} \\ 
-b_{-1} & 0
\end{pmatrix}$, 
on a $I(\mathcal D + \mathcal R)=\mathcal D I+ \widetilde{\mathcal Q}_{-1}$ o\`u  $\widetilde{\mathcal Q}_{-1}$ est une matrice $2\times 2$ de pseudos de degr\'e $-1$. Comme on peut choisir $b_{-1}$ tel que $I$ soit inversible sur $H^s(M)\times H^s(M)$ pour tout $s$, la fonction 
 $w(t)=Iv(t)$ v\'erifie $\partial_tw(t)=
(\mathcal D + \mathcal Q_{-1})w(t)$ avec $\mathcal Q_{-1}=\widetilde{\mathcal Q}_{-1}I^{-1}$, et la formule de Duhamel prouve qu'on 
$\sup_{t\in [0,T]}\Vert w(t)-e^{t\mathcal D}Iv\Vert_{L^2}\leq C_T\Vert Iv \Vert_{H^{-1}}$. Or \eqref{5.14} (pour $s=0$ et $s=-1$) implique
$\sup_{t\geq 0}\Vert e^{t\mathcal D}Iv-Ie^{t\mathcal D}v\Vert_{L^2}
\leq C\Vert v \Vert_{H^{-1}}$, ce qui prouve \eqref{5.15}.

On notera $P^{(2)}_h$ la probabilit\'e uniforme sur la sph\`ere unit\'e
$S^{(2)}_h=\{(v_0,v_1), \ \Vert v_0\Vert_{L^2}^2+\Vert v_1\Vert_{L^2}^2=1\}$
de $E_h\oplus E_h$, et $\mathbb E^{(2)}_h(f)$
 l'esp\'erance d'une v.a
sur $S^{(2)}_h$. Soit $f_t(v)=\mathcal{E}(U(t)j^{-1}v)$. Alors on a par construction
\begin{equation}\label{5.16}
\widetilde P_h\Big(u\in \widetilde S_h, \ \mathcal{E}(U(t)u)<\varepsilon  \Big)
=P^{(2)}_h\Big(v\in  S^{(2)}_h, \ f_t(v)<\varepsilon  \Big)
\end{equation} 
et d'apr\`es \eqref{5.15} 
\begin{equation}\label{5.17}
\sup_{v\in  S^{(2)}_h, t\in [0,T]}
\vert f_t(v)-\Vert e^{t\mathcal D}v\Vert^2_{L^2}\vert \leq  C_T h   
\end{equation}
avec $C_T$ ind\'ependant de $h\in ]0,1]$.
On a 
$$\Vert e^{t\mathcal D}v\Vert^2_{L^2}=(B_+(t)v_0\vert v_0)_{L^2}
+(B_-(t)v_1\vert v_1)_{L^2}$$
avec $B_{\pm}(t)=e^{td_\pm^*}e^{td_\pm}$. D'apr\`es le th\'eor\`eme
d'Egoroff, les op\'erateurs $B_{\pm}(t)$ sont des pseudos de degr\'e $0$
et de symbole principal homog\`ene de degr\'e $0$ 
$$\sigma_0(B_{\pm}(t))(\rho)=e^{-2t\underline a (t,\rho)}$$
Comme dans la preuve du lemme \ref{lem2.2.1}, on obtient donc
\begin{equation}\label{5.18}
\sup_{t\in [0,T]}\vert \mathbb E^{(2)}_h(f_t)-\int_{S^*(M)}
e^{-2t\underline a (t,\rho)}d\mu(\rho)\vert\leq C_T h
\end{equation}
Soit $m_{t,h}$ la m\'ediane de la v.a $f_t$ sur $S^{(2)}_h$. Comme $f_t$ est (uniform\'ement
en $t\geq 0$ et $h\in ]0,1]$) lipschitzienne sur $S^{(2)}_h$, on obtient comme
dans la preuve du th\'eor\`eme \ref{thm2.1}, qu'il existe $C,c>0$
tels que pour tout $r\in ]0,1]$

\begin{equation}\label{5.19}
\ba
&\sup_{t\geq 0}P^{(2)}_h(\vert f_t(v)-m_{t,h})\vert\geq r)\leq Ce^{-ch^{-d}r^2}\\
&\Rightarrow  \sup_{t\geq 0}\vert \mathbb E^{(2)}_h(f_t)-m_{t,h}\vert\leq\int_0^{+\infty}P^{(2)}_h(\vert f_t(v)-m_{t,h})\vert\geq r)dr \leq C h^{d/2} 
\ea
\end{equation}   
D'apr\`es l'hypoth\`ese \eqref{5.9}, on a 
\begin{equation}\label{5.20}
\limsup_{t\rightarrow \infty}\int_{S^*(M)}e^{-2t\underline a (t,\rho)}d\mu(\rho)\leq \int_{S^*(M)}\limsup_{t\rightarrow \infty}e^{-2t\underline a (t,\rho)}d\mu(\rho)=0
\end{equation}
Soit alors $T_\varepsilon$ tel que $\int_{S^*(M)}e^{-2T_\varepsilon\underline a (T_\varepsilon,\rho)}d\mu(\rho)\leq \varepsilon/8$. Soit  $h_\varepsilon$ tel qu'on ait \`a la fois
$C_{T_\varepsilon}h_\varepsilon\leq \varepsilon/8$ et 
$Ch_{\varepsilon}^{d/2}\leq \varepsilon/4$ . Alors \eqref{5.18} et la deuxi\`eme ligne de \eqref{5.19} impliquent  
pour tout 
$h\in ]0,h_\varepsilon]$, $m_{t,h}\leq \vert \mathbb E^{(2)}_h(f_t)\vert+
\vert \mathbb E^{(2)}_h(f_t)-m_{t,h}\vert\leq \varepsilon/2$, et donc la 
premi\`ere ligne de \eqref{5.19} implique (avec $r=\varepsilon/2$) 
et pour tout 
$h\in ]0,h_\varepsilon]$	 
\begin{equation}\label{5.21}
P^{(2)}_h(\vert f_{T_\varepsilon}(v)\vert\geq \varepsilon)\leq Ce^{-ch^{-d}\varepsilon^2/4}
\end{equation}
La preuve du lemme \ref{lem5.1} est compl\`ete.
\end{proof}

La preuve du th\'eor\`eme \ref{thm5.1} est une cons\'equence facile de
l'estimation \eqref{5.11}: quitte \`a diminuer $h_\varepsilon$, on peut toujours
supposer $Ce^{-ch^{-d}\varepsilon^2}\leq \alpha$ pour $h\in ]0,h_\varepsilon]$. D'apr\`es \eqref{5.0}, on a
\begin{equation}\label{5.22bis}
\lim_{t\rightarrow \infty}\inf_{h\in [h_\varepsilon,1]}\widetilde P_h\Big(u\in \widetilde S_h, \ \mathcal{E}(U(t)u)<\varepsilon  \Big)=1
\end{equation}
Soit alors $T\geq T_\varepsilon$ tel que pour tout $h\in [h_\varepsilon,1]$ on ait
$\widetilde P_h\Big(u\in \widetilde S_h, \ \mathcal{E}(U(T)u)<\varepsilon  \Big)
\geq 1-\alpha$. Comme l'\'energie est d\'ecroissante, on a aussi d'apr\`es
\eqref{5.11} pour tout $h\in ]0,h_\varepsilon]$ 
$\widetilde P_h\Big(u\in \widetilde S_h, \ \mathcal{E}(U(T)u)<\varepsilon  \Big)
\geq 1-\alpha$, ce qui prouve le th\'eor\`eme~\ref{thm5.1}. 
\end{proof}
Pour d\'emontrer le th\'eor\`eme~\ref{thm5.2}, nous allons revisiter la preuve pr\'ec\'edente. On remarque d'abord que par un argument simple de semi-groupe, la norme $\|w(t)\|_{H^{-1} } $ est major\'ee par $C e^{CT} \|v\|_{H^{-1}}$ et donc la  constante $C_T$ appara\^issant dans~\eqref{5.15},et donc aussi dans~\eqref{5.17}, peut \^etre estim\'ee par $Ce^{CT}$. On remarque ensuite que le Th\'eor\`eme d'Egoroff reste vrai pour des temps grands, mais plus petits que $\epsilon \log (1/h)$ (c'est en effet cons\'equence de~\cite[th\'eor\`eme 2.3.6]{Iv}, et des estimations exponentielles triviales sur le comportement en grand temps du flot bicaract\'eristique). Le prix \`a payer pour cela est que les op\'erateurs $B_{\pm} (t)$ sont des op\'erateurs pseudodiff\'erentiels dont les symboles sont dans des classes (l\'eg\`erement) exotiques : $S_{h, h^{- \alpha}, h^{- \alpha}, N}$ selon la terminologie de~\cite[Chapter 1]{Iv}, o\`u $\alpha>0$ peut \^etre choisi arbitrairement petit si $\epsilon >0$ est choisi petit. On en d\'eduit qu'il existe $\delta>0$ tel que 
$$ \sup_{t \in [0, \epsilon \log (1/h)]} | \mathbb{E} _h ^{(2)} ( f_t) - \int_{S^* (M)} e^{-2t \underline{a} (t, \rho)} d\mu( \rho) | \leq C e^{C \epsilon \log( 1/h)} h^{1-2\alpha}\leq C h^{\delta}
$$ o\`u dans la derni\`ere in\'egalit\'e, on a choisi $\epsilon >0$ assez petit.
On d\'eduit maintenant de l'hypoth\`ese qu'il existe $\sigma>0$ tel que
$$ \int_{S^* (M)} e^{-2t \underline{a} (t, \rho)} d\mu( \rho)\leq e^{-2t(\overline{a}- \beta )}+ e^{- S( \beta)t}\leq 2e^{ -2\sigma t}. $$
On d\'efinit la variable al\'eatoire  $g$ sur $S_h^{(2)}$ par la relation 
$$ g = \int_0^{\epsilon \log(1/h)} f_t e^{\sigma t} dt $$
D'apr\`es ce qui pr\'ec\`ede, 
la m\'ediane, $\mathcal{M}$,  de la fonction $g$ est inf\'erieure ou \'egale \`a 
$$C\int_0^{\epsilon \log(1/h)}e^{\sigma t} (e^{- 2\sigma t}+  h^{\delta}) dt,  $$ donc, quitte a diminuer encore $\epsilon >0$, cette m\'ediane est born\'ee. De plus, la norme Lipschitz de la fonction $g$ est clairement born\'ee pour $\epsilon$ petit par 
$$ C\int_{0}^{\epsilon \log(1/h)} e^{\sigma t}dt \leq C h^{-1/2}$$ on en d\'eduit comme pour la preuve de~\eqref{5.15} que 
$$ P_h ( g > \mathcal{M} + r) \leq C e^{-ch^{-d+1} r^2}.$$
Finalement, on remarque que comme l'\'energie de la solution des ondes amorties est une fonction d\'ecroissante, on a l'implication
$$ g \leq A \Rightarrow \forall t \in [0, \epsilon \log (1/h)], 
\quad \int_{0}^t f_{t}e^{\sigma s} ds \leq A \quad \Rightarrow  \quad f_t \leq {\sigma A\over  e^{\sigma t}-1}$$ce qui  termine la d\'emonstration du th\'eor\`eme~\ref{thm5.2}.

\bigskip
Nous allons \`a pr\'esent utiliser les r\'esultats pr\'ec\'edents pour obtenir
un taux de d\'ecroissance presque sur pour des donn\'ees dans l'espace d'\'energie.
On peut identifier les donn\'ees initiales 
 $(u_{0},u_{1})\in {H}^1(M)/\mathbb{C} \oplus L^2(M)$ \`a valeurs r\'eelles
avec $u=\lambda u_{0}+iu_{1}\in L^2(M; \mathbb C) $. On note $\mathcal M$
l'espace des mesures sur $L^2(M; \mathbb C)$ 
 introduites dans l'appendice C, associ\'ees \`a la d\'ecomposition 
 $L^2(M)=\sum_{k} E_{k}$  o\`u
les $E_{k}$ sont les blocs dyadiques standarts et avec $(\alpha_{k})\in l^2$. 
Toute mesure $\mu\in \mathcal M$ d\'efinit ainsi une mesure de probabilit\'e sur 
${H}^1(M)/\mathbb{C} \oplus L^2(M)$.
On a alors 
\begin{thm}\label{th.7} On suppose que la fonction de Birkhoff v\'erifie~\eqref{5.9}. On suppose aussi que les mesures $p_k$ v\'erifient $H_\gamma$, $\gamma>0$ (voir l'appendice C). Par exemple, on peut prendre 
\begin{equation}\label{eq.gauss}
 dp_k = \sqrt{ \frac 2 \pi}  e^{-\frac {r^2} 2} dr
 \end{equation}
 ou 
 \begin{equation}\label{eq.gaussbis}
 dp_k = \delta _{r=1}
 \end{equation}

 Il existe alors un taux de d\'ecroissance $f(t)>0$ qui tend vers $0$ \`a l'infini,  qui ne d\'epend pas du choix de la mesure $\mu\in \mathcal M$,  tel que$$ \mu(\{ U\in \mathcal{H}^1; \exists T,  \mathcal{E}(U(t) u) \leq f(t) \ \forall t\geq T\}) =1
$$

\end{thm}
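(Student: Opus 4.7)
La strat\'egie est diagonale: pour chaque \'echelle $\varepsilon_n=2^{-n}$, on contr\^ole simultan\'ement les $n$ premiers blocs de Littlewood--Paley gr\^ace au Th\'eor\`eme \ref{thm5.1}, on recolle \`a l'aide de la quasi-orthogonalit\'e uniforme en temps du Th\'eor\`eme \ref{thm5.0}, et on g\`ere la queue haute fr\'equence $u_{>n}$ comme un reste de s\'erie presque s\^urement convergente.

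Par construction (appendice C), $u=\sum_k\alpha_k\beta_k\tilde w_k$ avec $\beta_k\geq 0$ de loi $p_k$ et $\tilde w_k$ uniforme sur $\widetilde S_{h_k}$ ($h_k=2^{-k}$), ind\'ependants; l'hypoth\`ese $H_\gamma$ donne $\mathbb E(\beta_k^2)\leq C$ uniform\'ement, donc $\mathcal E(u)=\sum_k\alpha_k^2\beta_k^2<\infty$ et $\mathcal E(u_{>n}):=\sum_{k>n}\alpha_k^2\beta_k^2\to 0$ quand $n\to\infty$ presque s\^urement. Pour chaque $n\geq 1$, le Th\'eor\`eme \ref{thm5.1} appliqu\'e avec $\varepsilon=\varepsilon_n$ et $\alpha=2^{-n}/n^2$ fournit un temps $T_n$, ind\'ependant de $\mu$, tel que $\widetilde P_h(\mathcal E(U(T_n)\tilde v)\geq \varepsilon_n)\leq 2^{-n}/n^2$ pour tout $h\in ]0,1]$. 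L'\'ev\'enement $\Omega_n=\{\forall k\leq n,\ \mathcal E(U(T_n)\tilde w_k)<\varepsilon_n\}$ v\'erifie alors $\mathbb P(\Omega_n^c)\leq 2^{-n}/n$ par union finie, et Borel--Cantelli donne $u\in\Omega_n$ pour tout $n\geq n_0(u)$, presque s\^urement.

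Pour passer du contr\^ole bloc par bloc au contr\^ole global, on d\'ecompose $u=u_{\leq n}+u_{>n}$ et on utilise le Th\'eor\`eme \ref{thm5.0}: $\|\widetilde\Pi_{h_j}U(t)i_{h_k}\|_{\mathcal E}=O_N(h^N)$ pour $j\neq k$ et $N$ arbitrairement grand, uniform\'ement en $t\geq 0$. En d\'eveloppant $\|U(t)u_{\leq n}\|_{\mathcal E}^2$ dans la base des blocs, les termes crois\'es sont n\'egligeables, ce qui donne sur $\Omega_n$
\begin{equation*}
\mathcal E(U(T_n)u_{\leq n})\leq C\varepsilon_n\mathcal E(u).
\end{equation*}
La d\'ecroissance de l'\'energie donne aussi $\mathcal E(U(T_n)u_{>n})\leq \mathcal E(u_{>n})$, d'o\`u $\mathcal E(U(t)u)\leq C\bigl(\varepsilon_n\mathcal E(u)+\mathcal E(u_{>n})\bigr)$ pour tout $t\geq T_n$. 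On pose alors $\eta_n=2^{-n/2}$ et $f(t)=\eta_n$ pour $t\in [T_n,T_{n+1}[$; comme $T_n$ et $\eta_n$ ne d\'ependent ni de $(\alpha_k)$ ni de $p_k$, $f$ est ind\'ependante de $\mu\in\mathcal M$.

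Pour presque tout $u$, on choisit $n_1(u)\geq n_0(u)$ assez grand pour que $C\mathcal E(u)\varepsilon_n\leq \eta_n/2$ et $C\mathcal E(u_{>n})\leq \eta_n/2$ pour tout $n\geq n_1(u)$ (ces deux conditions sont r\'ealis\'ees p.s.\ car $\varepsilon_n/\eta_n\to 0$ et $\mathcal E(u_{>n})\to 0$ p.s.); alors $\mathcal E(U(t)u)\leq \eta_n=f(t)$ pour tout $t\geq T:=T_{n_1(u)}$, ce qui conclut. L'obstacle principal est l'impossibilit\'e d'appliquer na\"ivement un union bound sur l'infinit\'e de blocs \`a \'echelle $\varepsilon_n$ fix\'ee: la parade consiste \`a limiter l'union aux $n$ premiers blocs (compens\'e par le choix $\alpha=2^{-n}/n^2$) et \`a traiter la queue $u_{>n}$ via la convergence presque s\^ure du reste de s\'erie, en combinaison avec la quasi-orthogonalit\'e \emph{uniforme en $t\geq 0$} du Th\'eor\`eme \ref{thm5.0}, essentielle pour d\'ecoupler les termes dans le calcul d'\'energie global.
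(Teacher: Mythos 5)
Your overall architecture is the same as the paper's (apply Theorem \ref{thm5.1} uniformly in the block parameter $h$, decouple blocks via Theorem \ref{thm5.0}, conclude by a Borel--Cantelli argument), but the head/tail split $u=u_{\leq n}+u_{>n}$ introduces a genuine gap that the paper's argument is specifically designed to avoid. The problem is the claim that $C\,\mathcal E(u_{>n})\leq \eta_n/2$ holds for all $n\geq n_1(u)$ ``presque s\^urement car $\mathcal E(u_{>n})\to 0$ p.s.'': convergence of $\mathcal E(u_{>n})$ to $0$ gives no quantitative rate, whereas your candidate $f$ forces $\mathcal E(u_{>n})\lesssim 2^{-n/2}$. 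For a sequence $\alpha_k^2\sim 1/(k\log^2 k)$ (admissible since it is $\ell^1$, hence $(\alpha_k)\in\ell^2$), the tail $\sum_{k>n}\alpha_k^2\beta_k^2$ typically decays like $1/\log n$, far slower than any geometric, and in fact no choice of a $\mu$-independent sequence $\eta_n\to 0$ can dominate $\mathcal E(u_{>n})$ uniformly over the class $\mathcal M$. So the final inequality $\mathcal E(U(t)u)\leq f(t)$ for $t\geq T_{n_1(u)}$ is not established.

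The paper sidesteps this precisely by \emph{not} truncating in $k$: starting from the uniform-in-$h$ bound of Theorem \ref{thm5.1}, it integrates over the radial variable to get $\mathbb E(\mathcal E(U(T_j)u_k))\leq C\,2^{-j}\alpha_k^2$ for every $k$, then sums over \emph{all} blocks using the quasi-orthogonality $\mathcal E(U(t)u)\leq C\sum_k\mathcal E(U(t)u_k)$ (Theorem \ref{thm5.0}) to obtain $\mathbb E(\mathcal E(U(T_j)u))\leq C\,2^{-j}\|(\alpha_k)\|_{\ell^2}^2$. Chebyshev then gives $\mu(\mathcal E(U(T_j)u)\geq 2^{-j/2})\leq C\,2^{-j/2}\|(\alpha_k)\|_{\ell^2}^2$, which is summable in $j$ for \emph{every} admissible $\mu$, even though the constant depends on $\mu$; one concludes with the nested sets $A^j=\cap_{k\geq j}A_k$ and the monotonicity of the energy. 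In other words, the tail and the head are handled simultaneously by the expectation bound; there is no separate tail to control. To repair your argument you would have to replace the deterministic bound $\mathcal E(U(T_n)u_{>n})\leq\mathcal E(u_{>n})$ by a probabilistic one exploiting that $U(T_n)$ also damps the high blocks, which is exactly what the global expectation estimate does.
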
 
\begin{remk}
On remarque que le th\'eor\`eme~\ref{th.7} et les choix $dp_k = \delta_{r=1}$, $(\alpha_k)= \delta_{k=k_0}$ impliquent le th\'eor\`eme~\ref{thm5.1}. Nous allons en fait d\'eduire le th\'eor\`eme~\ref{th.7} du  th\'eor\`eme~\ref{thm5.1}. Si on ommet l'uniformit\'e par rapport \`a la famille de mesures $\mathcal{N}$, le r\'esultat est vrai sans hypoth\`ese sur la fonction de Birkhoff. Il est alors juste cons\'equence de la convergence vers $0$ de l'\'energie pour toute donn\'ee initiale. 
\end{remk}
\begin{proof}
 D'apr\`es le th\'eor\`eme~\ref{thm5.1}, pour tout $j\in \mathbb{N}$, il existe $T_j$ tel \be\label{unif} 
 \forall k,  \widetilde{P}_k \bigl(u \in \widetilde{S} _k ; \mathcal{E}(U(T_j)) u > 2^{-j}\bigr) \leq 2^{-j}
 \ee
On peut supposer $T_{j}<T_{j+1}$ et $\lim T_{j}=\infty$, et on pose 
\be\label{unif2}
f(t)=1 \ \text{pour} \ t <T_{0}, \quad  f(t)= \sum_{j } 2^{-j/2}1_{t \in [T_j, T_{j+1}[}
\quad \text{pour} \ t \geq T_{0}
 \ee
On d\'eduit de (\ref{unif}) en utilisant
$ \mathcal{E}(U(T_j) u) \leq 1$ pour $u\in \widetilde{S} _k $
$$ \int_{\widetilde{S} _k} \mathcal{E}(U(T_j) u) dP_{k} \leq 2^{-j}+2^{-j}$$
En \'ecrivant pour $u\in E_{k}$, $u=r\omega$, avec $\omega\in \widetilde{S} _k$ on obtient
(avec 
$C=2\sup_{k}\int r^2dp_{k}<\infty$ d'apr\`es l'hypoth\`ese $(H_{\gamma})$, voir 
l'appendice C)

\begin{equation}\label{eq.decr-unif}
\mathbb{E} ( \mathcal{E}(U(T_j) u) = \int_{0}^\infty r^2(\int_{\widetilde{S} _k} \mathcal{E}(U(T_j) u)dP_{k})dq_{k} \leq 2^{1-j}\int_{0}^\infty r^2dq_{k}\leq C2^{-j}\vert \alpha_{k}\vert ^2
\end{equation} 
On remarque maintenant d'apr\`es le th\'eor\`eme~\ref{thm5.0} qu'il existe $C>0$ tel que pour tout $t\geq 0$ et tout $u$ d'\'energie finie
$$ u = \sum_k u_k, u_k \in E_k \Rightarrow \mathcal{E}(U(t)u) \leq C \sum_k \mathcal{E}(U(t)u_k).$$
On en d\'eduit d'apr\`es~\eqref{eq.decr-unif}
\be
\mathbb{E} ( \mathcal{E}(U(T_j) u)\leq C\sum_k \mathbb{E} ( \mathcal{E}(U(T_j) u_{k})
\leq C 2^{-j}\|(\alpha_k)\|_{\ell^2}^2
\ee
D'apr\`es l'in\'egalit\'e de Bienaym\'e-Tchebitchev, on obtient $$ \mu (\{ U\in \mathcal{H}^1; \mathcal{E}(U(T_j) u) \geq 2^{-j/2} \}) \leq C 2^{-j/2}\|(\alpha_k)\|_{\ell^2}^2
$$ 
 Notons
 $$A_j= \{ u; \mathcal{E}(U(T_j) u) \leq  2^{-j/2}\}, A^j= \cap_{k\geq j } A_k$$
 La suite $A^j$ est croissante et on a $\mu(A^j) \geq (1-C \|(\alpha_k)\|_{\ell^2}^2 \sum_{k\geq j} 2^{-k/2})= 1- C' 2^{-j/2} $. Si $A=\cup A^j$, on a $\mu(A)= 1$, et pour tout $u\in A$,
 il existe $C$ tel que pour tout $t$ on ait $\mathcal{E}(U(t)u)\leq Cf(t)$. En effet,
 pour $u\in A^j$ on a en utilisant la d\'ecroissance de l'\'energie 
  $$\mathcal{E}(U(t)u)\leq f(t), \qquad \forall t\geq T_j$$
Le th\'eor\`eme~\ref{th.7} est d\'emontr\'e.
  \end{proof}

\subsection{Conservation de la notion d'\'echelle}
Nous d\'emontrons dans cette section le th\'eor\`eme \ref{thm5.0}. 
\begin{proof}
Soit $N\geq 1$ tel que $c-c'/N\geq c/2$. Pour $h'\geq Nh$, on a 
$ch'-c'h\geq (c-c'/N)h'\geq h'c/2$ et pour $h'\leq Nh$, on a 
$ch'-c'h\geq h'r/N$. Avec $r'=\min (c/2,r/N)$ on a donc toujours
$ch'-c'h\geq r'h'$ donc $c/h-c'/{h'}\geq r'/h$. On choisit $b',b,d$ tels que
\begin{equation}\label{5.22}
c-r'<b'<b<c, \quad \text{et} \quad c'<d
\end{equation}
Soient $\psi,\phi$ des fonctions r\'eelles $C^\infty$ sur $\mathbb R$,
\`a valeurs dans $[0,1]$, avec 
$\psi=1$ pr\`es de $]-\infty,c-r']$, $\psi=0$ sur $[b',\infty[$ et
$\phi=1$ pr\`es de $[c,c']$ et $\phi$ \`a support dans $[b,d]$. Les op\'erateurs
$\phi(h\lambda)$, $\psi(h\lambda)$ op\`erent sur 
$H=H^1(M)/{\mathbb C 1}\oplus L^2(M)$
par la formule $\phi(h\lambda)(e_j)=\phi(h\omega_j)e_j$, et ils commutent
aux op\'erateurs $\widetilde \Pi_h, \widetilde \Pi_{h'}, i_h, 
i_{h'}$. Comme on a d'apr\`es \eqref{5.22} $\psi(h\lambda)i_{h'}=i_{h'}$, 
$\phi(h\lambda)i_h=i_h$, 
$\widetilde \Pi_{h'}\psi(h\lambda)=\widetilde\Pi_{h'}$,
$\widetilde \Pi_h\phi(h\lambda)=\widetilde\Pi_h$, il suffit de v\'erifier
que pour tout $N$, il existe une constante $C_N$,
telle que 
\begin{equation}\label{5.23}
\sup_{t\geq 0}\Vert \phi(h\lambda) U(t)\psi(h\lambda)\Vert_\mathcal{E}
+\sup_{t\geq 0}\Vert \psi(h\lambda) U(t)\phi(h\lambda)\Vert_\mathcal{E}
\leq C_N h^N 
\end{equation}
Comme l'adjoint  de $U(t)$ est $U^*(t)=e^{tA_a^*}$, avec
$$A_a^*=-\begin{pmatrix}0 
&Id \\ 
\mathbf{\Delta} 
& 2a 
\end{pmatrix}$$
on a $U^*(t)=e^{-tA_{-a}}$, donc $U^*(t)(u_0,u_1)=e^{tA_{a}}(u_0,-u_1)$. Il suffit donc d'estimer le terme
$\Vert \phi(h\lambda) U(t)\psi(h\lambda)\Vert_\mathcal{E}$.

Nous allons construire un op\'erateur $\Theta_h$, born\'e sur $H$ pour tout
$h \in ]0,1]$ fix\'e, qui commute \`a U(t), et qui v\'erifie
\begin{equation}\label{5.24}
\ba
&\phi(h\lambda)=\phi(h\lambda)\Theta_h+R_{1,h}\\
&\Theta_h\psi(h\lambda)=R_{2,h}\\
\ea\end{equation}
et tel que pour tout $N$, il existe une constante $C_N$,
telle que 
\begin{equation}\label{5.25}
\Vert R_{1,h} \Vert_\mathcal{E}+\Vert R_{2,h} \Vert_\mathcal{E}\leq C_Nh^N
\end{equation} 
On aura alors
\begin{equation}\label{5.26}\ba
&\phi(h\lambda) U(t)\psi(h\lambda)=(\phi(h\lambda)\Theta_h+R_{1,h})
U(t)\psi(h\lambda)=\\
&\phi(h\lambda)U(t)\Theta_h\psi(h\lambda)+R_{1,h}U(t)\psi(h\lambda)=\\
&\phi(h\lambda)U(t)R_{2,h}+R_{1,h}U(t)\psi(h\lambda)
\ea\end{equation}
et comme $\Vert \phi(h\lambda)U(t)\Vert_\mathcal{E}\leq 1$ et
$\Vert U(t)\psi(h\lambda)\Vert_\mathcal{E}\leq 1$, on obtiendra d'apr\`es \eqref{5.25}
\begin{equation}\label{5.27}
\sup_{t\geq 0}\Vert \phi(h\lambda) U(t)\psi(h\lambda)\Vert_\mathcal{E}\leq C_N h^N
\end{equation}

On effectue maintenant une r\'eduction semi-classique en posant
$\tau(u_0,u_1)=(u_0,hu_1)$ et en introduisant la norme semi-classique
\begin{equation}\label{5.28}
\Vert (w_0,w_1)\Vert^2_{E_h}={1\over 2}\int_M 
(\vert h\nabla_x w_0\vert^2+\vert w_1\vert^2)d_gx
\end{equation}
de sorte qu'on a $\Vert \tau(u)\Vert^2_{E_h}=h^2\Vert u\Vert^2_\mathcal{E}$, 
donc pour tout op\'erateur $R$, 
$\Vert \tau R \tau^{-1}\Vert_{E_h}=\Vert R\Vert_\mathcal{E}$, et
\begin{equation}\label{5.29bis}
\tau U(t)\tau^{-1}=e^{itB/h}
\end{equation}
avec 
\begin{equation}\label{5.29}
B=-ih\tau A_a\tau^{-1}=\begin{pmatrix}
0 &-i \\ 
-ih^2\mathbf{\Delta} & 2iha 
\end{pmatrix}
\end{equation} On a pour $z\in \mathbb C$
\begin{equation}\label{5.30}
Im((z-B)w\vert w)_{E_h}=Im(z)\Vert w \Vert^2_{E_h}
-2h\int_M a\vert w_1\vert^2/2 d_gx
\end{equation}
Si $U_h$ est la bande $U_h=\{z\in \mathbb C, Im(z)\in [0,2h\Vert a \Vert_{L^\infty}]\}$, la r\'esolvante $(z-B)^{-1}$ existe donc pour $z\in \mathbb C\setminus U_h$,
et v\'erifie
\begin{equation}\label{5.31}
\Vert (z-B)^{-1}\Vert_{E_h}\leq {1\over dist(z,U_h)}
\end{equation}
On pose 
\begin{equation}\label{5.32}
f_{\varepsilon}(z)={1\over \sqrt{2\pi\varepsilon}}
\int_Je^{-(z-x)^2/{2\varepsilon}}dx, \quad J=[-d,-b]\cup [b,d]
\end{equation} 
La fonction  $f_{\varepsilon}(z)$ est holomorphe dans $\mathbb C$ et v\'erifie
pour tout $z\in \mathbb C$,  
$\vert f_{\varepsilon}(z)\vert\leq e^{Im(z)^2/{2\varepsilon}}$.
De plus,on a  pour tout $z\in\mathbb C$
 
\begin{equation}\label{5.33}
\vert f_{\varepsilon}(z)\vert\leq {C\over \sqrt\varepsilon}
e^{Im(z)^2/{2\varepsilon}-dist(Re(z),J)^2/{2\varepsilon}}
\end{equation} 
On d\'efinit alors l'op\'erateur $\Theta_h$ par
\begin{equation}\label{5.34}
\Theta_h={1\over 2i\pi}\int_\gamma f_{\varepsilon}(z)(z-B)^{-1}dz
\end{equation}
Dans \eqref{5.34} on choisit le contour $\gamma$ de la forme $\gamma=\gamma_-\cup \gamma_+$. $\gamma_+$ est la r\'eunion du segment 
$[-r_0,r_0]+ir_{+}$, 
$r_{+}=2h\Vert a \Vert_{L^\infty}+\sqrt{\varepsilon}$, et des deux demi-droites 
$r_0+ir_{+}+\rho e^{i\theta_0}, \rho\geq 0$ et 
$-r_0+ir_{+}-\rho e^{-i\theta_0}, \rho\geq 0$, avec 
$\theta_0>0$ petit et $r_0>d$ grand; on oriente $\gamma_+$
de  droite \`a gauche.
$\gamma_-$   est la r\'eunion du segment 
$[-r_0,r_0]-i\sqrt{\varepsilon}$, 
et des deux demi-droites 
$r_0-i\sqrt{\varepsilon}+\rho e^{-i\theta_0}, \rho\geq 0$ et 
$-r_0-i\sqrt{\varepsilon}-\rho e^{i\theta_0}, \rho\geq 0$; 
on oriente $\gamma_-$
de gauche \`a droite. On choisit le param\`etre $\varepsilon$
sous la forme $\varepsilon=h^\nu$
avec $\nu\in ]0,1/4]$. On a alors d'apr\`es \eqref{5.31} et \ref{5.33},
pour une constante $C$ ind\'ependante de $h$
\begin{equation}\label{5.35} 
\Vert \Theta_h\Vert_{E_h}\leq C \varepsilon^{-1}
\end{equation} 
et $\Theta_h$ commute \`a $e^{itB/h}$. 
On va construire des op\'erateurs $Q_{h,k}(z),k\geq 0$ et $R_{h,N}(z), N\geq 1$, holomorphes en $z$ pr\`es de $\gamma$, et qui v\'erifient pour tout $N\geq 1$
avec $C_N$ ind\'ependant de $h,z$
\begin{equation}\label{5.36}\ba
&(z-B)\sum_{k=0}^{N-1}({h\over i})^kQ_{h,k}(z)=Id-h^NR_{h,N}(z)\\
&\Vert R_{h,N}(z)\Vert_{E_h}\leq C_N\varepsilon^{-(1+2N)}
\ea\end{equation}
Posons $\Theta_{h,k}={1\over 2i\pi}\int_\gamma f_\varepsilon (z)Q_{h,k}(z)dz$ et
$\Theta_{h}^N=\sum_{k=0}^{N-1}({h\over i})\Theta_{h,k}$. On a 
pour tout $N\geq 1$ d'apr\`es \eqref{5.31}, \eqref{5.33},
\eqref{5.34} et \eqref{5.36}, en utilisant $\varepsilon=h^\nu$
\begin{equation}\label{5.37}\ba
&\Theta_h=\Theta_{h}^N+ {h^N\over 2i\pi} \int_\gamma
f_{\varepsilon}(z)(z-B)^{-1}R_{h,N}(z)dz\\
&\Vert {h^N\over 2i\pi} \int_\gamma
f_{\varepsilon}(z)(z-B)^{-1}R_{h,N}(z)dz\Vert_{E_h}\leq C'_N h^{N(1-2\nu)-2\nu}
\ea
\end{equation}
Comme on a $\nu<1/2$, pour obtenir \eqref{5.24} et \eqref{5.25}, 
il suffit de prouver que pour tout $N_2$, il existe $N_1\geq N_{2}$ et $C$
ind\'ependant de $h$ tels que  
\begin{equation}\label{5.24bis}
\ba
&\phi(h\lambda)=\phi(h\lambda)\Theta_h^{N_1}+R_{1,h,N_1}\\
&\Theta_h^{N_1}\psi(h\lambda)=R_{2,h,N_1}\\
&\Vert R_{1,h,N_1} \Vert_{E_h}+\Vert R_{2,h,N_1} \Vert_{E_h}\leq C h^{N_2}
\ea\end{equation}
 
Nous allons construire les op\'erateurs $Q_{h,k}(z)$ et $R_{h,N}(z)$ en utilisant le calcul $h$-pseudo\-diff\'erentiel classique (voir \cite{M}).
Pour $m$ entier relatif, on notera $\mathcal M^m$ l'espace des op\'erateurs matriciels
$2\times2$
$$\begin{pmatrix}
A_1(z,x,hD) & A_2(z,x,hD) \\ 
A_3(z,x,hD) & A_4(z,x,hD) 
\end{pmatrix}$$
o\`u  $A_j(z,x,hD)$ est un polyn\^ome de $z$ de degr\'e $\leq m+p_j$, avec 
$p_1=p_4=0,p_2=-1,p_3=1$, et
$A_j(z,x,hD)=\sum_{l=0}^{l=m+p_j}z^lA_{j,l}(x,hD)$ avec 
$A_{j,l}\in \mathcal E^{m-l+p_j}_{cl}$. Par convention, un polyn\^ome
de degr\'e strictement n\'egatif est nul, de sorte que $\mathcal M^m$ est
r\'eduit \`a $\{0\}$ pour $m\leq -2$, et les \'elements de $\mathcal M^{-1}$
sont de la forme $\begin{pmatrix}
0 & 0 \\ 
A(x,hD) & 0
\end{pmatrix}$ avec $A(x,hD) \in  \mathcal E^{0}_{cl}$.
On a pour tout $m,m'$ $\mathcal M^m\mathcal M^{m'}\subset \mathcal M^{m+m'}$
et pour $m\leq m'$ $\mathcal M^m\subset \mathcal M^{m'}$. Dans une carte 
locale, on notera $\mathcal S^m$ les symboles complets des 
\'el\'ements de $\mathcal M^m$. On a $zId \in \mathcal M^1$
et d'apr\`es \eqref{5.29}, $B\in \mathcal M^1$. On posera
$\beta_1(x,\xi)=\begin{pmatrix}
0 &-i \\ 
i\vert\xi\vert_x^2 & 0 
\end{pmatrix}\in \mathcal S^1$, et $\delta=z^2-\vert\xi\vert_x^2$ de sorte que 
\begin{equation}\label{5.38}
(z-\beta_1)^{-1}={1\over \delta}\begin{pmatrix}
z &-i \\ 
-i\vert\xi\vert_x^2 & z 
\end{pmatrix} \in {1\over \delta}\mathcal S^1
\end{equation}
Soit $V_l$ un recouvrement
fini de $M$ par des ouverts de cartes, $\theta_l, \theta'_l\in C_0^\infty(V_l)$
avec $\theta'_l$ \'egal \`a $1$ au voisinage du support de $\theta_l$ et
$\sum_l \theta_l=1$. Dans chaque ouvert de carte $V_l$ on choisit des coordonn\'ees $(x_1,...,x_d)\in \mathbb R^d$, et on d\'efinit les op\'erateurs
$Q_{h,k}(z)$ par la formule
\begin{equation}\label{5.39}
Q_{h,k}(z)=\sum_l \theta'_l Op(q_{k}(z,x,\xi))\theta_l
\end{equation}
o\`u les $q_{k}(z,x,\xi)$ sont d\'efinies par les formules de
r\'ecurrence qui d\'efinissent l'inverse formel de $(z-B)$, soit
\begin{equation}\label{5.40}\ba
&q_{0}(z,x,\xi)= (z-\beta_1)^{-1}\\
&q_{n}(z,x,\xi)=(z-\beta_1)^{-1}\Big(\sum_{\vert\alpha\vert+k=n,\vert\alpha\vert\geq 1}{1\over \alpha !}\partial_\xi^{\alpha}\beta_1\partial_x^{\alpha}q_k
+ \sum_{\vert\alpha\vert+k=n-1}{1\over \alpha !}\partial_\xi^{\alpha}\beta_0\partial_x^{\alpha}q_k \Big)
\ea\end{equation}
o\`u $\beta_1+h\beta_0/i$ est le symbole complet de $B$ dans la carte locale,
avec $\beta_0\in \mathcal S^0$. D'apr\`es \eqref{5.29} on a pour tout
$\alpha$, $\partial_\xi^{\alpha}\beta_1\in \mathcal S^{1-\vert\alpha\vert}$,
$\partial_\xi^{\alpha}\beta_1=0$ pour $\vert\alpha\vert >2$ 
et $\partial_\xi^{\alpha}\beta_0\in \mathcal S^{-\vert\alpha\vert}$, 
$\partial_\xi^{\alpha}\beta_0=0$ pour $\vert\alpha\vert\geq 1$. De plus, on 
a pour $k\geq 0$ et tout $\alpha$, 
$\partial_x^\alpha (\delta^{-(1+2k)}\mathcal S^{1+3k})\subset 
\delta^{-(1+2k+\vert\alpha\vert)}\mathcal S^{1+3k+2\vert\alpha\vert}$.
On montre alors facilement
par r\'ecurrence qu'on a pour tout $n\geq 0$
\begin{equation}\label{5.41}
q_n= {p_{n} \over \delta^{1+2n}}, \ \ \ p_{n}\in \mathcal S^{1+3n}
\end{equation}
L'op\'erateur $R_{h,N}(z)$ est alors d\'efini par la formule \eqref{5.36} et a donc pour symbole
\begin{equation}\label{5.42}
-r_{h,N}(z,x,\xi)=\sum_{\vert\alpha\vert+k=N}{1\over \alpha !}\partial_\xi^{\alpha}\beta_1\partial_x^{\alpha}q_k
+ \beta_0 q_{N-1} \in \delta^{-(1+2N)}\mathcal S^{2+3N}
\end{equation}
Comme on a $\min_{z\in \gamma,x,\xi}\vert\delta\vert\simeq \varepsilon$ et que
$\vert\delta\vert\simeq \vert z \vert^2+\vert\xi\vert_{x}^2$ pour  $(z,\xi)\rightarrow \infty$,
la deuxi\`eme ligne de \eqref{5.36} r\'esulte de \eqref{5.42}. Comme tous les $q_{n}$ sont
des fractions rationnelles de $z$, on peut d'apr\`es \eqref{5.33} utiliser la formule des
r\'esidus pour calculer les symboles des op\'erateurs $\Theta_{h,k}$. Pour 
$\xi$ pr\`es de $0$ on obtient en utilisant la majoration \eqref{5.33}
 $\Theta_{h,k}(x,\xi)\in \mathcal O (e^{-c/h^{\nu}})$  et pour $\xi\not= 0$ on trouve

\begin{equation}\label{5.43}
\Theta_{h,k}(x,\xi)= {1\over (2k)!}\partial^{2k}_{z=\vert\xi\vert_{x}}
\Big( f_{\varepsilon}(z){p_{k}(z,x,\xi)\over (z+\vert\xi\vert_{x})^{1+2k}}\Big) +
{1\over (2k)!}\partial^{2k}_{z=-\vert\xi\vert_{x}}
\Big( f_{\varepsilon}(z){p_{k}(z,x,\xi)\over (z-\vert\xi\vert_{x})^{1+2k}}\Big)
\end{equation}
Il  r\'esulte alors de \eqref{5.32} et \eqref{5.33} qu'on a 
$\Theta_{h}^N(x,\xi)\in \mathcal O (e^{-c/h^{\nu}})$ pour $\xi$
grand, et pour $\xi$ born\'e 
$$\vert\partial_{x}^\alpha\partial_{\xi}^{\beta}\Theta_{h}^N(x,\xi)\vert
\leq C_{\alpha,\beta}\varepsilon^{-(\vert\alpha\vert+\vert\beta\vert)}
=C_{\alpha,\beta}h^{-\nu(\vert\alpha\vert+\vert\beta\vert)}$$
de sorte que les op\'erateurs $\Theta_{h}^N(x,\xi)$ appartiennent \`a une classe
admissible pour le calcul symbolique. Comme on a $f_{\varepsilon}(z)=
1+\mathcal O(e^{-c/h^{\nu}})$ au voisinage de $[-c',-c]\cup [c,c']$ on d\'eduit du fait que 
$\sum_{k} ({h\over i})^kQ_{h,k}(z,x,\xi)$ est le symbole de la r\'esolvante $(z-B)^{-1}$ qu'on a,
pour $\vert\xi\vert_{x}$ proche de $[c,c']$, $\Theta_{h}(x,\xi)=1+\mathcal O(e^{-c/h^{\nu}})$ et $\Theta_{h,k}(x,\xi)=\mathcal O(e^{-c/h^{\nu}})$
pour $k\geq 1$. On a aussi $f_{\varepsilon}(z)=\mathcal O(e^{-c/h^{\nu}})$ 
au voisinage de $[r'-c,c-r']$. En choisissant
$b$ proche de $c$ et $d$  proche de $c'$, on obtient donc
les formules \eqref{5.24bis} par les r\`egles de calcul symbolique. Ceci termine
la preuve du th\'eor\`eme \ref{thm5.0}.
\end{proof}
  
%%%%%%%%%%%%%%%%%%%%%%%%%%%%%%%%%%%%%%%%%%%%%%%%%%%%%%%%%%%%%%%
\section{Application aux  ondes non-lin\'eaires sur-critiques}\label{sec5}
On consid\`ere dans cette section l'\'equation des ondes d\'efocalisante sur une vari\'et\'e compacte $M$ (sans bord) de dimension $3$, pour une nonlin\'earit\'e polynomiale $u^p$, $p$ impair.
\begin{equation}\label{eq.ondes}
(\partial_t^2 - \mathbf{\Delta} ) u + u^{p} =0 , u \mid_{t=0}= u_0, \partial_t u \mid_{t=0} = u_1, \quad (u_0, u_1) \in \mathcal{H}^1(M)=H^1(M)\times L^2 (M)
\end{equation}
On supposera $p\geq 7$, de sorte que l'\'equation (\ref{eq.ondes}) est sur-critique.
On supposera aussi $u$ \`a valeurs r\'eelles. Les m\^emes \'enonc\'es restent vrais avec $u$
\`a valeurs complexes en rempla\c{c}ant $u^p$ par $\vert u\vert^{p-1}u$.
Il est bien connu que l'\'equation pr\'ec\'edente poss\`ede pour toutes donn\'ees initiales 
dans $(H^1 (M)\cap L^{p+1} (M)) \times L^2(M)$ des solutions faibles d\'efinies
pour $t\in [0,\infty[$, qui v\'erifient l'\'equation au sens des distributions, et telles que de plus 
\begin{equation}\label{eq.ineg}
 \mathcal{E}(u) (t) = \int_M \frac{|\nabla u|^2} 2 + \frac{|\partial _t u|^2} 2 + \frac{u^{p+1}} {p+1} dx \leq \mathcal{E}(u)(0).
 \end{equation}
 L'objet de ce paragraphe est de construire "beaucoup" de donn\'ees initiales dans $H^1\times L^2$ pour lesquelles d'une part on sait construire une solution forte de l'\'equation~\eqref{eq.ondes} (sur un petit intervalle de temps $(0,T)$), et d'autre part, on sait que toutes les solutions faibles co\" \i ncident avec cette solution forte sur cet intervalle.  
 Dans cette section, les mesures de probabilit\'es $\mathcal{M}_s$ sont celles construites dans l'appendice C avec le choix $a_{k}=2^k$ de la th\'eorie de Littlewood Paley standard.
 \begin{thm}[Existence]\label{thm.ondes}
Pour tout $p<+ \infty$, et pour toute  mesure de probabilit\'es sur $\mathcal{D}' (M) ^2$, $\mu \in \mathcal{M}_1\times \mathcal{M}_0$, pour $\mu$-presque toute donn\'ee initiale $(u_0, u_1) \in H^1(M) \times L^2(M)$,  il existe $T>0$ et une solution forte de~\eqref{eq.ondes} dans l'espace affine
$$ \bigl(\cos( t\sqrt{- \mathbf{\Delta}}) u_0, \frac{ \sin(t\sqrt{ - \mathbf{\Delta}})}{\sqrt{ - \mathbf{\Delta}}} u_1\bigr)  + C^0((0,T); H^2(M)) \cap C^1((0,T); H^1(M) .$$
Cette solution v\'erifie l'identit\'e d'\'energie
$$ \mathcal{E}(u) (t) = \mathcal{E}(u) (0).$$
et les estim\'ees de Strichartz pour tout $r<+\infty$, 
\begin{equation}\label{eq.strichartz}
 \| u \|_{L^\infty((0,T)\times M)}+ \| u \|_{L^2((0,T); W^{1,r}(M))}+ \| \partial_t u \|_{L^2((0,T); L^r(M))} <+\infty
\end{equation}
De plus on a la borne inf\'erieure suivante sur le temps maximal d'existence $T_{\text{max}}(u_0, u_1)$:
\begin{equation}\label{eq.mintemps}
\exists \delta>0, C>0, c>0 ; \mu(\{(u_0, u_1)\in \mathcal{H}^1(M); T_{\text{max} }(u_0, u_1)< \lambda \}) \leq C e^{-c \lambda^{- \delta}}
\end{equation}
\end{thm}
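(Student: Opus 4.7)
The overall strategy is a Bourgain-type decomposition $u = u^F + v$, where
\[
u^F(t,x) = \cos(t\sqrt{-\mathbf{\Delta}})\, u_0 + \frac{\sin(t\sqrt{-\mathbf{\Delta}})}{\sqrt{-\mathbf{\Delta}}}\, u_1
\]
is the free linear evolution of the (randomized) initial data and $v$ solves the inhomogeneous Cauchy problem
\[
(\partial_t^2 - \mathbf{\Delta})v = -(u^F + v)^p, \qquad v\vert_{t=0} = 0,\ \partial_t v\vert_{t=0} = 0.
\]
Since $p\geq 7$ is $H^1$-supercritical in dimension $3$, no direct fixed point argument in the energy space works for $u$ itself. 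The gain is that randomization drastically improves the integrability of $u^F$: adapting the proof of Corollaire~\ref{cor0.1} block by block with a parameter $t$, and exploiting the invariance of $P_h$ under the free flow on each spectral block $E_h$, one proves that for every $T>0$ and every $\Lambda\geq 1$,
\[
\mu\big( \|u^F\|_{L^\infty((-T,T)\times M)} > \Lambda \big) \leq C_T\, e^{-c_T \Lambda^2}.
\]
Indeed $u^F(t,\cdot)\vert_{E_h}$ has the same law as $u_0\vert_{E_h}$ for every $t$, so discretizing $t$ on a grid of step $\sim h$ (the Bernstein-type Lipschitz scale of $E_h$) and summing over blocks with the weights prescribed by $\mathcal{M}_1\times\mathcal{M}_0$ yields the Gaussian tail on the full $L^\infty_{t,x}$ norm.

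With this a priori $L^\infty_{t,x}$ control of $u^F$ in hand, $v$ is constructed by a Banach fixed-point argument in
\[
X_T := C^0([0,T]; H^2(M)) \cap C^1([0,T]; H^1(M)),
\]
which embeds into $L^\infty_{t,x}$ since $H^2(M)\hookrightarrow L^\infty(M)$ in dimension $3$. The Duhamel representation
\[
v(t) = -\int_0^t \frac{\sin((t-s)\sqrt{-\mathbf{\Delta}})}{\sqrt{-\mathbf{\Delta}}}\,(u^F+v)^p(s)\,ds
\]
combined with energy estimates at the $H^2$-level reduces the problem to controlling $(u^F+v)^p$ in $L^1((0,T); H^1(M))$. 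Expanding by the binomial formula, applying $\nabla$, and using $\|v\|_{L^\infty_{t,x}}\leq C\|v\|_{X_T}$ together with $\|\nabla u^F\|_{L^\infty_t L^2_x}=\|(u_0,u_1)\|_{H^1\times L^2}$, one gets
\[
\|(u^F+v)^p\|_{L^1_t H^1_x} \leq C\, T\,\big(\|u^F\|_{L^\infty_{t,x}}+\|v\|_{X_T}\big)^{p-1}\big(\|u^F\|_{L^\infty_t H^1_x}+\|v\|_{X_T}\big),
\]
together with an analogous Lipschitz estimate on differences. This yields a strict contraction on the ball of radius $1$ of $X_T$ as soon as $T$ is small enough in terms of $\|u^F\|_{L^\infty_{t,x}}$, and gives the quantitative lower bound
\[
T_{\max}(u_0,u_1) \geq c_0\, \big(1+\|u^F\|_{L^\infty((-1,1)\times M)}\big)^{-(p-1)}.
\]
Combined with the Gaussian tail above, this yields \eqref{eq.mintemps} with $\delta = 2/(p-1)$.

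The energy identity and the Strichartz bounds then follow by standard arguments. For the energy, one approximates $(u_0,u_1)$ by smooth spectral truncations, conserves energy along the corresponding (globally defined, classical) regularized solutions, and passes to the limit: all terms of $\mathcal{E}(u)$, including $\int u^{p+1}$, pass to the limit because $u = u^F + v\in L^\infty_{t,x}$ almost surely and the probabilistic bounds are stable under spectral truncation. The Strichartz estimates~\eqref{eq.strichartz} decompose as probabilistic Strichartz bounds for $u^F$ (obtained via the a.s.\ $L^\infty$ control interpolated with $L^\infty_t L^2_x$ conservation, or directly via Th\'eor\`eme~\ref{th.lq} on each block followed by a time-frequency discretization) plus the deterministic estimates satisfied by $v\in X_T$, which is far more regular than required.

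The central technical difficulty is to maintain a genuine Gaussian tail for $\|u^F\|_{L^\infty((-T,T)\times M)}$. A pointwise-in-$t$ application of Th\'eor\`eme~\ref{thm0.1} is insufficient, because the exceptional sets depend on $t$, and a naive union bound would either destroy the Gaussian behaviour or accumulate an unacceptable factor in $T$. The remedy, as sketched above, is to use the invariance of $P_h$ under the linear flow on each spectral block together with the Bernstein-type Lipschitz scale~$h$, thus reducing $\sup_{t\in(-T,T)}$ to a supremum over a grid of size polynomial in $h^{-1}$ and concluding at almost no cost. Preserving the Gaussian tail through these manipulations is exactly what makes~\eqref{eq.mintemps} quantitative.
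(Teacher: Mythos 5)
Your overall strategy is the paper's: write $u=u^F+v$ with $u^F$ the free linear flow of the randomized data, run a Banach fixed point for $v$ in $X_T=C^0([0,T];H^2)\cap C^1([0,T];H^1)$, and deduce the lower bound on $T_{\max}$ from a probabilistic bound on the free flow. The fixed point and the resulting scaling in $T$ are essentially the same, and the claims about the energy identity and the Strichartz bounds via regularization are also as in the paper.

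The significant divergence, and where the argument as written does not go through, is the probabilistic input. You propose to bound $\|u^F\|_{L^\infty((-T,T)\times M)}$ with a \emph{Gaussian} tail $Ce^{-c\Lambda^2}$, and deduce $\delta=2/(p-1)$ in \eqref{eq.mintemps}. Both are incorrect for the class $\mathcal{M}_1\times\mathcal{M}_0$. The radial laws $dp_k$ are only assumed to satisfy $(H_\gamma)$ for some $\gamma>0$; the pointwise decay provided by Lemme~\ref{lem.decroissance} is $e^{-c(\lambda/\alpha_k)^{2\gamma/(\gamma+2)}}$, and after the Khintchine--moment argument the tail of the full sum degrades to $e^{-c\lambda^{\gamma/(\gamma+1)}}$ (see Proposition~\ref{prop.gdedev}). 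Even for compactly supported $dp_k$ ($\gamma=\infty$) the final tail is exponential, not Gaussian. Accordingly $\delta$ in \eqref{eq.mintemps} must involve $\gamma$ as well as $p$; the paper gets $\delta=\beta\,\gamma/(\gamma+1)$ from $T_{\max}\gtrsim\|u_l\|_{W^{1,r}}^{-1/\beta}$ and \eqref{eq.gdedev4}.

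The ``same law'' step is also imprecise. The pair $(u_0,u_1)$ is drawn from a product measure on $\prod_k(E_k\times E_k)$ whose angular part is uniform on $S_k\times S_k$, not on the unit sphere of $E_k\oplus E_k$. The free wave propagator mixes the two components, so $u^F(t,\cdot)\vert_{E_k}$ does \emph{not} have the law of $u_0\vert_{E_k}$. What is true, and what the paper uses, is that conjugating $u_0\vert_{E_k}$ (alone) by $e^{it\sqrt{-\mathbf{\Delta}}}$ is a unitary rotation of $S_k$ and thus preserves the uniform angular law; one then treats the $\cos$- and $\sin$-parts separately. Even this does not make $\cos(t\sqrt{-\mathbf{\Delta}})$ an isometry on $E_k$; it is the factorization through $e^{it\sqrt{-\mathbf{\Delta}}}$ that is essential.

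Finally, the paper avoids $L^\infty_{t,x}$ altogether: it works with $\|u_l\|_{W^{1,r}((0,1)\times M)}$ for $r>4$, so that $W^{1,r}\hookrightarrow L^\infty$ in the $4$-dimensional space-time, and proves large-deviation bounds by moment estimates (Lemme~\ref{lem.independance}, a Khintchine inequality, combined with Lemme~\ref{lem.decroissance}) rather than by discretizing $t$ and chaining. Your chaining argument on an $h$-grid in time is plausible on each spectral block, but summing it across blocks while tracking the $\gamma$-dependence is appreciably more delicate than the moment method; and it cannot, in any case, produce tails stronger than $(H_\gamma)$ permits. If you want to pursue the $L^\infty_{t,x}$ route you should state the tail with exponent $\gamma/(\gamma+1)$, apply the rotation-invariance only to $\cos(t\sqrt{-\mathbf{\Delta}})u_0$ and $\sin(t\sqrt{-\mathbf{\Delta}})/\sqrt{-\mathbf{\Delta}}\,u_1$ separately, and carry out the epsilon-net count explicitly; otherwise the $W^{1,r}$ route is cleaner.
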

\begin{proof} On suit la strat\'egie de~\cite{BuTz}, en y incorporant nos nouvelles estimations probabilistes. On cherche la solution sous la forme 
$$ u= \bigl(\cos( t\sqrt{- \mathbf{\Delta}}) u_0, \frac{ \sin(t\sqrt{ - \mathbf{\Delta}})}{\sqrt{ - \mathbf{\Delta}}} u_1\bigr)  + v= u_l +v$$
avec $$ v \in C^0((0,T); H^2(M)) \cap C^1((0,T); H^1(M)) .
$$
On cherche alors $v$ v\'erifiant
\begin{equation}\label{eq.ptfixe}
(\partial_t^2 - \mathbf{\Delta} ) v = -(u_l + v )^p, \qquad v\mid_{t=0} = \partial_t v \mid_{t=0} =0
\end{equation}
soit
\begin{equation}\label{duhamel}
 v(t) = -\int_0^t \frac{ \sin((t-s) \sqrt{ -\mathbf{\Delta}})}{\sqrt{- \mathbf{\Delta}}} (u_l + v )^p(s) ds.
\end{equation}
Soit $r>4$ grand et $T\leq 1$. Notons 
$\Vert u_{l} \Vert_{W^{1,r}}=\Vert u_{l} \Vert_{W^{1,r}((0,1)\times M)}$.
D'apr\`es la proposition~\ref{prop.gdedev}, $\Vert u_{l} \Vert_{W^{1,r}}$ est fini $\mu$-presque s\^urement, et d'apr\`es les injections de Sobolev $H^2(M) \rightarrow L^\infty(M)$ et
 $W^{1,r}((0,1)\times M) \rightarrow L^\infty ((0,1)\times M)$ pour $r> 4$, on a pour $s\in [0,T]$
\begin{equation}\ba
 \| (u_l + v )^p(s)\|_{H^1(M)} &\leq C ( \|u_l(s)\|_{L^\infty} + 
 \|v(s)\|_{L^\infty})^{p-1} (\|u_l(s)\|_{H^1} + \|v(s)\|_{H^1})\\
& \leq C ( \Vert u_{l} \Vert_{W^{1,r}} + \|v(s)\|_{H^2})^{p-1} (\|u_l(s)\|_{H^1} + \|v(s)\|_{H^2})
\ea\end{equation}
et de m\^eme, pour $\sup_{0\leq s \leq T}\|v_{j}(s)\|_{H^2}\leq R$ et $s\in [0,T]$,
\begin{equation}\ba
& \| (u_l + v_{1} )^p(s)-(u_l + v_{2} )^p(s)\|_{H^1(M)} \\
&\leq C(\Vert u_{l} \Vert_{W^{1,r}} + R)^{p-2})(\Vert u_{l} \Vert_{W^{1,r}} + R +\|u_l(s)\|_{H^1})(\|v_{1}(s)-v_{2}(s)\|_{H^1})
\ea\end{equation} 
 Comme on a $\int_{0}^T\|u_l(s)\|_{H^1}ds \leq C\sqrt{T}\Vert u_{l} \Vert_{W^{1,r}}$,
  on d\'eduit du th\'eor\`eme du point fixe qu'il existe $c>0$ tel que l'\'equation~\eqref{eq.ptfixe} admet un unique point fixe dans la boule de rayon $R$ de
$$X_T=C^0((0,T); H^2(M)$$
d\`es que 
$$TR^{p-1}+ \sqrt{T}\Vert u_{l} \Vert_{W^{1,r}}^{p-1}+ \sqrt{T}R^{p-2}\Vert u_{l} \Vert_{W^{1,r}}\leq c, \ \text{et} \ \sqrt{T}\Vert u_{l} \Vert_{W^{1,r}}^{p}\leq cR
$$
Comme il existe $\alpha>0, \beta>0$ petits tels que ces in\'equations ont une solution en $R$
pour $\Vert u_{l} \Vert_{W^{1,r}}\leq \alpha T^{-\beta}$, on  d\'eduit~\eqref{eq.mintemps}
de~\ref{eq.gdedev3} et~\eqref{eq.gdedev4}. A nouveau d'apr\`es~\ref{eq.gdedev3} et~\eqref{eq.gdedev4}, $u_{l}$ v\'erifie~\eqref{eq.strichartz}. De plus, d'apr\`es (\ref{eq.ptfixe}), et puisque $(u_{l}+v)\in L^1((0,T),H^1(M))$, les in\'egalit\'es de Strichartz
pour l'\'equation des ondes en dimension $3$ impliquent pour ${1\over q}+{3\over r}={1\over 2}$
et $q\in ]2,\infty]$, qu'on a $v\in L^q((0,T), W^{1,r}(M))$ et $\partial_{t}v\in L^q((0,T), L^r(M))$, donc $v$ v\'erifie~\eqref{eq.strichartz}. La preuve du th\'eor\`eme \ref{thm.ondes}
est compl\`ete.
\end{proof}
\begin{thm}[Unicit\'e fort-faible]\label{thm.ondes2}
Pour $\mu$-presque toutes donn\'ees initiales $(u_0, u_1)$ dans l'espace d'\'energie, $H^1(M) \times L^2(M)$, on a  $u_0 \in L^{p+1}$ et  toute solution faible de~\eqref{eq.ondes} v\'erifiant l'in\'egalit\'e d'\'energie~\eqref{eq.ineg} co\"\i ncide avec la solution donn\'ee par le th\'eor\`eme~\ref{thm.ondes} sur l'intervalle de temps $(0,T)$ 
\end{thm}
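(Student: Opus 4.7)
Le premier point, \`a savoir $u_0 \in L^{p+1}(M)$ $\mu$-presque s\^urement, r\'esulte directement des estim\'ees $L^q$ probabilistes d\'emontr\'ees dans la section~\ref{sec2.3}. Plus pr\'ecis\'ement, en d\'ecomposant $u_0 = \sum_k u_{0,k}$ dans les blocs de Littlewood--Paley correspondant \`a la construction de $\mathcal{M}_1$ (avec $a_k = 2^k$), l'application du th\'eor\`eme~\ref{th.lq} bloc par bloc et l'hypoth\`ese de sommabilit\'e $(\alpha_k) \in \ell^2$ montrent que $\|u_0\|_{L^{p+1}}$ est fini presque s\^urement pour tout $p<+\infty$. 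Ce r\'esultat joue un r\^ole essentiel puisqu'il entra\^ine, pour toute solution faible $w$ v\'erifiant l'in\'egalit\'e d'\'energie~\eqref{eq.ineg}, que $w \in L^\infty_t(H^1(M)\cap L^{p+1}(M))$.

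La strat\'egie consiste ensuite \`a comparer, sur l'intervalle $(0,T)$ donn\'e par le th\'eor\`eme~\ref{thm.ondes}, la solution forte $u = u_l + v$ et une solution faible $w = u_l + \tilde v$ arbitraire ayant la m\^eme donn\'ee initiale. Puisque $v \in C^0((0,T);H^2(M))$ et $u_l \in L^\infty((0,T)\times M)$ (presque s\^urement, gr\^ace aux in\'egalit\'es de Strichartz \'etablies dans la preuve du th\'eor\`eme~\ref{thm.ondes}), on dispose de $u \in L^\infty((0,T)\times M)$. Posons $D = w - u$; $D$ v\'erifie au sens des distributions
\begin{equation*}
  (\partial_t^2 - \mathbf{\Delta})D + w^p - u^p = 0, \qquad D|_{t=0} = \partial_t D|_{t=0} = 0.
\end{equation*}
Comme $p$ est impair, $p+1$ est pair et $x\mapsto x^{p+1}$ est convexe. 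On introduit alors la quantit\'e modifi\'ee
\begin{equation*}
  \mathcal F(t) = \tfrac{1}{2}\int_M \bigl(|\nabla D|^2 + |\partial_t D|^2\bigr)dx + \tfrac{1}{p+1}\int_M \bigl[w^{p+1} - u^{p+1} - (p+1)u^p D\bigr]dx,
\end{equation*}
dont l'int\'egrande non-quadratique est positif par convexit\'e, de sorte que $\mathcal F(t) \geq \mathcal E(D)(t)$ et $\mathcal F(0) = 0$.

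Un calcul formel utilisant la conservation de l'\'energie pour $u$ (solution forte), l'in\'egalit\'e d'\'energie pour $w$, et l'\'equation fortement satisfaite par $u$, montre que les termes en $\partial_t D$ s'annulent et qu'il reste
\begin{equation*}
  \frac{d}{dt}\mathcal F(t) \leq \int_M \bigl[w^p - u^p - p u^{p-1} D\bigr] \partial_t u\, dx = \sum_{k=2}^{p}\binom{p}{k}\int_M u^{p-k}D^k \partial_t u\, dx.
\end{equation*}
Chaque terme de cette somme peut alors \^etre contr\^ol\'e par H\"older en utilisant $u\in L^\infty_{t,x}$, le contr\^ole $D \in L^\infty_t(H^1\cap L^{p+1})$ (issu de l'\'etape pr\'ec\'edente) et les in\'egalit\'es de Strichartz~\eqref{eq.strichartz} qui donnent $\partial_t u \in L^q_t L^r_x$ pour tout couple admissible, en particulier $r = p+1$ avec $q<\infty$. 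On obtient ainsi une majoration de la forme $\frac{d}{dt}\mathcal F(t) \leq \phi(t)\mathcal F(t)$ avec $\phi \in L^1(0,T)$, et le lemme de Gronwall combin\'e \`a $\mathcal F(0)=0$ entra\^ine $\mathcal F \equiv 0$, donc $D \equiv 0$.

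L'obstacle principal sera la justification rigoureuse du calcul de $\frac{d}{dt}\mathcal F$: la solution faible $w$ n'a pas assez de r\'egularit\'e pour permettre les int\'egrations par parties directes et pour tester l'\'equation contre $\partial_t D$. Il faudra passer par une r\'egularisation (convolution en temps et troncature spectrale en espace) en prenant soin de d\'emontrer que l'in\'egalit\'e d'\'energie passe \`a la limite et que le terme de convexit\'e $\tfrac{1}{p+1}\int w^{p+1}$ se comporte semi-continu\^ument inf\'erieurement, ce qui pr\'eserve l'in\'egalit\'e $\frac{d}{dt}\mathcal F \leq \cdots$. La v\'erification de l'int\'egrabilit\'e du terme extr\^eme $\int D^p \partial_t u$ est le point technique le plus d\'elicat, et c'est pr\'ecis\'ement l\`a qu'intervient de mani\`ere cruciale le fait que $u_0 \in L^{p+1}$ presque s\^urement (via l'\'energie de $w$) ainsi que la r\'egularit\'e Strichartz probabiliste de $u_l$.
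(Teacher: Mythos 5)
Votre esquisse reprend la bonne architecture g\'en\'erale (quantit\'e modifi\'ee convexe $\mathcal F$, in\'egalit\'e diff\'erentielle, Gronwall), qui est bien celle de la proposition~\ref{prop.stab} du texte, avec $\mathcal F$ jouant essentiellement le r\^ole de $II$ et le terme $\int_M[w^p-u^p-pu^{p-1}D]\partial_t u$ \'etant la d\'eriv\'ee $-\frac{d}{dt}I$ du texte. Le premier point ($u_0\in L^{p+1}$ p.s.) est correct, mais le bon outil est la proposition~\ref{prop.gdedev} (qui donne $u_0\in W^{1,q}\subset L^\infty\subset L^{p+1}$ p.s. pour $q$ grand) plut\^ot que le th\'eor\`eme~\ref{th.lq} appliqu\'e bloc par bloc, qui \`a lui seul ne garantit pas la sommabilit\'e.

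Le point v\'eritablement probl\'ematique est l'estimation des termes $\int_M u^{p-k}D^k\partial_t u$, $k\in\{2,\dots,p\}$, par H\"older \og na\"\i ve\fg{} avec l'exposant $r=p+1$. Pour conclure par Gronwall \`a partir de $\mathcal F(0)=0$, il faut une majoration de la forme $\frac{d}{dt}\mathcal F\leq\phi(t)\mathcal F^\gamma$ avec $\gamma\geq 1$ et $\phi\in L^1$ : avec $\gamma<1$, l'EDO de comparaison $\dot y=\phi y^\gamma$, $y(0)=0$, admet des solutions non nulles et Gronwall ne permet pas de conclure. Or pour $k=p$ et $r=p+1$, H\"older donne $\int_M D^p\partial_t u\leq\|D\|^p_{L^{p+1}}\|\partial_t u\|_{L^{p+1}}$, et le seul contr\^ole disponible sur $\|D\|_{L^{p+1}}$ provient du terme $\|D\|_{L^{p+1}}^{p+1}$ de $\mathcal F$ (ou de $J$), ce qui donne $\|D\|^p_{L^{p+1}}\lesssim\mathcal F^{p/(p+1)}$, soit $\gamma=p/(p+1)<1$. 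En dimension $3$ et pour $p\geq 7$, l'injection $H^1\subset L^6$ ne permet pas non plus de rattraper l'\'ecart, car $p+1>6$. On v\'erifie de m\^eme que les redistributions de type $D^p=D^\alpha\cdot D^{p-\alpha}$ ne ferment pas pour $p$ grand : la contrainte $s>2$ (pour que $s'<\infty$ soit admissible pour $\partial_t u$) force $\alpha>(p-1)/2$, et alors $\|D^\alpha\|_{L^2}$ ne se contr\^ole que par une puissance $\lesssim 1/2$ de $\mathcal F$ quand $p\to\infty$. C'est pr\'ecis\'ement l\`a que la preuve du texte diverge de la v\^otre : elle combine un d\'ecoupage en fr\'equence (op\'erateurs $A_j=\varphi_j(2^{-j}\sqrt{-\Delta})$) \emph{et} une troncature en amplitude $\chi(w^2/\lambda_j^2)$, ce qui permet, d'une part, de convertir $w^l$ en $w^{p+1}$ sur la r\'egion $|w|\gtrsim\lambda_j$ et d'y gagner un facteur $\lambda_j^{-(p+1-l)}$, et d'autre part, d'estimer $\|\nabla_x I_j\|_{L^1}$ via Cauchy--Schwarz sur la r\'egion $|w|\lesssim\lambda_j$, chacune de ces contributions \'etant \emph{lin\'eaire} en $J(t)=\mathcal E(w)+\|w\|_{L^2}^2$. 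Le choix $\lambda_j=2^{ja}$ \'equilibre ensuite les deux s\'eries. Sans ce d\'ecoupage mixte fr\'equence--amplitude, le Gronwall ne ferme pas, et c'est le c{\oe}ur technique de la d\'emonstration que votre proposition escamote.
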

\begin{proof}
 Le th\'eor\`eme~\ref{thm.ondes2} est une cons\'equence directe du r\'esultat de stabilit\'e suivant: 
 \begin{prop}\label{prop.stab}
 Soit $u$ une solution forte de l'\'equation des ondes~\eqref{eq.ondes} d\'efinie sur $[0,T]$, de donn\'ees initiales $(u_0, u_1)$, donn\'ee par le th\'eor\`eme~\ref{thm.ondes}. Il existe $C>0$ tel que pour toute solution (faible) $v$ sur l'intervalle $[0,T]$ de l'\'equation~\eqref{eq.ondes}, de donn\'ees initiales $(v_0, v_1) \in (H^1(M) \cap L^{p+1} (M)) \times L^2(M)$ v\'erifiant
 $$ \forall t\in [0,T] \quad  \mathcal{E}(v) (t) \leq \mathcal{E}(v)(0),$$
 on a 
 $$ \mathcal{E}(u-v) (t) + \|u-v \|^2_{L^2(M)}(t)\leq C \bigl(\mathcal{E}(u-v) (0)+ \|u-v\|^2 _{L^2(M)}(0)\bigr)$$
 \end{prop}
Pour d\'emontrer cette proposition, on  reprend et on affine un r\'esultat similaire de Struwe~\cite{Str} (d\'emontr\'e pour des solutions fortes $C^\infty$), (il faut modifier la preuve pour l'adapter au niveau de r\'egularit\'e des solutions fortes donn\'ees par le th\'eor\`eme~\ref{thm.ondes}). 
On notera $w= v-u$ et $Dw= (\partial_t w, \nabla_x w)$. On a 
$$ (\partial_t ^2 w - \mathbf{\Delta}_x w) + (u+w) ^p - u^p =0$$
et 
$$ \mathcal{E}(v) = \mathcal{E}(u) + I + II$$
avec 
$$ I = \int_M Du \cdot Dw + u^p w$$
$$ II = \int _M \frac {|Dw|^2 } 2 + \frac { (u+w)^{p+1}  - u^{p+1}} {p+1}   -u^p w$$
Un calcul  (justifi\'e par une r\'egularisation de $u$ et un passage \`a la limite) donne
\begin{equation}
\frac d {dt} I(t) = \int_M ( u^p +p u^{p-1} w - (u+w) ^p ) \partial _t u
\end{equation}
On va montrer qu'il existe une fonction $g(t)\in L^2\subset L^1$ telle qu'on ait
 \begin{equation}\label{eq.I}
 \bigl|\frac d {dt} I(t) \Bigr|  \leq C (\mathcal{E}(w)+ \|w\|_{L^2}^2) (t) g(t)
\end{equation}
Supposons provisoirement (\ref{eq.I}) d\'emontr\'e. 
Un calcul simple montre que
$$  \frac { (u+w)^{p+1}  - u^{p+1}} {p+1}   -u^p w \geq \frac 1 C w^{p+1} - C w^2$$ ce qui implique
\begin{equation}\label{eq.II}
 II(t)\geq \frac 1 C \mathcal{E}(w) (t) - C \|w\|^2_{L^2}
(t)
\end{equation} 
Si on revient \`a l'hypoth\`ese de d\'ecroissance de l'\'energie de $v$, on obtient
\be\label{eq.III}
 0 \leq \mathcal{E}(v) (0) - \mathcal{E}(v) (t) \Rightarrow II(t) \leq II(0) + I(0) - I(t)
 \ee
Comme pour $q$ assez grand, $u_0 \in W^{1,q}(M) \subset L^\infty(M)$, on a 
$$\bigl|II(0)\Bigr| \leq \mathcal{E}(w) (0)+C \|w\|_{L^2}^2(0),$$
  ceci implique d'apr\`es~\eqref{eq.I},~\eqref{eq.II}, et  ~\eqref{eq.III},
 
$$
\mathcal{E}(w) (t) \leq  C \Big(\int_0^t (\mathcal{E}(w)+ \|w\|_{L^2}^2) (s)g(s) ds+\|w\|_{L^2}^2 (t)  +  \mathcal{E}(w)(0) +  \|w\|_{L^2}^2 (0)  \Big)
  $$
  mais d'apr\`es l'in\'egalit\'e de Minkowski,
\begin{equation}
\label{eq.mink}
 \|w\|^2_{L^2}(t) \leq C(\int_0^t \|\partial_t w\|_{L^2} ^2(s)ds + \|w\|_{L^2}^2 (0))
 \end{equation}
et donc
$$
(\mathcal{E}(w)+\|w\|_{L^2}^2) (t) \leq  C\Big(\int_0^t (\mathcal{E}(w)+ \|w\|_{L^2}^2) (s)
(1+g(s)) ds+  \mathcal{E}(w)(0)+ \|w\|_{L^2}^2 (0)  \Big)
  $$
Le lemme de Gronwall d\'emontre alors la proposition~\ref{prop.stab}. \\
Montrons \`a pr\'esent (\ref{eq.I}). On note $J(t)=(\mathcal{E}(w)+ \|w\|_{L^2}^2) (t)$.
Le terme $
\frac d {dt} I(t)$ est combinaison lin\'eaire de termes
$\int_M w^l u^{p-l} \partial _t u$ avec $l\in \{2,...,p\}$. D'apr\`es (\ref{eq.strichartz}),
on a $u^{p-l} \partial _t u\in L^2((0,T);L^r(M))$ pour tout $r<\infty$. Il suffit donc de v\'erifier qu'on a pour tout $l\in \{2,...,p\}$, et pour $f\in \cap_{r<\infty}L^2((0,T);L^r(M))$,
\be\label{eq.IV}
\vert \int_M w^lf\vert \leq J(t)g(t), \ \text{avec} \ g\in L^2
\ee
Choisissons pour $j\in \mathbb N$ des $\varphi_{j}$ tels que $\sum_{j}\varphi_{j}^2(2^{-j}s)=1$, $\varphi_{0}$ \`a support dans $[0,3]$, $\varphi_{j}$ \`a support dans $[1,5]$, la famille
des $\varphi_{j}$ \'etant uniform\'ement $C^\infty$. Soit $A_{j}=\varphi_{j}(2^{-j}\sqrt{-\triangle})$. Alors les opd autoadjoints $A_{j}$ sont uniform\'ement en $j$ born\'es sur tous les $L^q$,
$q\in [1,\infty]$, et on a $\sum A_{j}^2=Id$. Il en r\'esulte
$ \int_M w^lf =\sum_{j}\int_M A_{j}(w^l)A_{j}(f)$.
Par hypoth\`ese sur la fonction $f$, on a pour tout $r<\infty$
\be\label{eq.VI}
\Vert A_{j}(f)\Vert_{L^\infty(M)}\leq 2^{jd/r}g_{r}(t),\ \text{avec} \ g_{r}\in L^2
\ee
Soit $\chi(s)\in C^\infty$ \'egal \`a 1 pour $s\leq 1$ et nul pour $s\geq 2$.
On a, avec $\lambda_{j}>0$
\be
A_{j}(w^l)=A_{j}(w^l \chi({w^2\over\lambda_{j}^2}))+A_{j}(w^l(1-\chi)({w^2\over\lambda_{j}^2}))=
I_{j}+II_{j}
\ee
On a avec $C$ ind\'ependant de $j$,
\be\label{eq.VII}
\Vert II_{j}\Vert_{L^1(M)}\leq C \Vert w^l(1-\chi)({w^2\over\lambda_{j}^2})\Vert_{L^1(M)}\leq
{C\over \lambda_{j}^{p+1-l}}\Vert w \Vert^{p+1}_{L^{p+1}(M)}\leq {CJ(t)\over \lambda_{j}^{p+1-l}}
\ee
et aussi, en utilisant $l\geq 2$ et $\Vert w\nabla_{x}w\Vert_{L^1}\leq 
\Vert w\Vert_{L^2}\Vert \nabla_{x}w\Vert_{L^2}$
\be\label{eq.VIII}
\begin{aligned}
&\Vert \nabla_{x}I_{j}\Vert_{L^1(M)}\leq \Vert A_{j}(\nabla_{x}(w^l \chi)\Vert_{L^1(M)}+
\Vert [A_{j},\nabla_{x}](w^l \chi)\Vert_{L^1(M)} \\
&\leq C \lambda_{j}^{l-2}(\Vert w\Vert_{L^2}\Vert \nabla_{x}w\Vert_{L^2}+\Vert w\Vert_{L^2}^2)\leq C \lambda_{j}^{l-2}J(t)
\end{aligned}\ee
En \'ecrivant 
\be\label{eq.X}
\int_M w^lf = \int_{M}A_{0}(w^l)A_{0}(f) +\sum_{j\geq 1}\int_{M} II_{j}(w^l)A_{j}(f) + 
\sum_{j\geq 1}\int_{M} \triangle I_{j}(w^l)\triangle ^{-1}A_{j}(f)
\ee
on obtient donc en utilisant (\ref{eq.VI}), (\ref{eq.VII}) et (\ref{eq.VIII}),
et $\vert \int_{M}A_{0}(w^l)A_{0}(f)\vert \leq CJ(t)g_{r}(t)$
\be\label{eq.Xbis}
\vert \int_M w^lf\vert \leq CJ(t)g_{r}(t)\Big(1 +\sum_{j\geq 1} {2^{jd/r}\over \lambda_{j}^{p+1-l}} + 
\sum_{j\geq 1} 2^{-j}2^{jd/r}\lambda_{j}^{l-2}\Big)
\ee
ce qui prouve (\ref{eq.IV}) avec le choix $\lambda_{j}=2^{ja}$, $a \in ]0, {1\over p-2}[$  et $r$ grand. 
  \end{proof}
\section{Cas des vari\'et\'es \`a bord}
Dans le cas o\`u la vari\'et\'e $M$ \`a un bord tel que $M\cup \partial M $ est compacte, une bonne partie des r\'esultats expos\'es dans cet article reste vraie si on consid\`ere le laplacien sur $(M,g)$ avec conditions de Dirichlet ou Neumann. En effet, la formule de Weyl avec reste pr\'ecis\'e~\eqref{2.1bis} reste vraie d'apr\`es les travaux d'Ivrii~\cite{Iv}, donc les minorations/majorations de~\eqref{eq.mino2} aussi. En ce qui concerne l'estim\'ee
$e_{x,h}\leq C_{0}N_{h}$, elle est cons\'equence dans le cas avec bord,  des travaux de Sogge~\cite{So02} pour les conditions aux limites de Dirichlet, puis Smith-Sogge~\cite{SmSo} pour les conditions de Neuman.  En effet, d'apr\`es~\cite[Proposition 2.3]{So02}, on obtient
\begin{prop}~\label{prop.spectral} Supposons que $a_h=1 , b_h = 1+ h$. Alors il existe $C>0$ tel que pour tout $x\in M$ et tout $0<h\leq 1$, 
$$ \|\Pi_h\|_{L^1(M) \rightarrow L^\infty(M)} \leq C h^{1-d}.$$
\end{prop}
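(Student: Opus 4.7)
Mon plan est de ramener la borne $L^1 \to L^\infty$ à une estimation ponctuelle du noyau du projecteur, puis de la diagonale de ce noyau, et enfin d'invoquer l'estimée de Weyl ponctuelle sur une fenêtre spectrale de largeur unité. Écrivons d'abord $\Pi_h$ comme l'opérateur intégral de noyau
$$K_h(x,y) = \sum_{k \in I_h} e_k(x) e_k(y).$$
Puisque la norme d'un opérateur intégral $L^1 \to L^\infty$ coïncide avec le supremum essentiel de son noyau, on a $\|\Pi_h\|_{L^1 \to L^\infty} = \sup_{x,y \in M} |K_h(x,y)|$.

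L'inégalité de Cauchy-Schwarz appliquée à la somme définissant $K_h$ donne
$$|K_h(x,y)|^2 \leq \Bigl(\sum_{k \in I_h} |e_k(x)|^2\Bigr) \Bigl(\sum_{k \in I_h} |e_k(y)|^2\Bigr) = e_{x,h}\, e_{y,h},$$
et il suffit donc de démontrer la borne ponctuelle uniforme $e_{x,h} \leq C h^{1-d}$ pour tout $x \in M$ et tout $h\in ]0,1]$. Compte tenu du choix $a_h = 1$, $b_h = 1+h$, l'ensemble d'indices $I_h$ correspond à la tranche spectrale $\omega_k \in ]h^{-1}, h^{-1}+1]$, soit une fenêtre de largeur $1$ en valeur propre de $\sqrt{-\mathbf{\Delta}}$, centrée autour de $\lambda = h^{-1}$. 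L'estimation voulue se réécrit ainsi
$$\sum_{\lambda < \omega_k \leq \lambda + 1} |e_k(x)|^2 \leq C \lambda^{d-1}, \quad \text{uniformément en } x \in M,$$
qui est précisément l'estimée de Weyl ponctuelle à fenêtre courte.

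Dans le cas d'une variété sans bord, cette borne est un corollaire classique de l'asymptotique de Hörmander~\cite[Theorem 1.1]{H68}, via la construction d'une paramétrixe oscillatoire de l'opérateur $\cos(t\sqrt{-\mathbf{\Delta}})$ en temps petits et l'utilisation d'une fonction de troncature temporelle adéquate. L'obstacle principal, et la raison pour laquelle les travaux de Sogge interviennent, est que dans le cas avec bord cette paramétrixe oscillatoire doit tenir compte des réflexions du flot bicaractéristique sur $\partial M$, ce qui empêche un traitement purement microlocal standard. Néanmoins, Sogge~\cite[Proposition 2.3]{So02} construit dans le cas Dirichlet une paramétrixe adaptée aux points de réflexion non diffractifs qui fournit exactement cette borne ponctuelle en $O(\lambda^{d-1})$; Smith-Sogge~\cite{SmSo} étendent ce résultat au cas Neumann en traitant la perte de régularité associée aux réflexions du deuxième type. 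La combinaison de ces résultats avec la réduction Cauchy-Schwarz ci-dessus donne la proposition.
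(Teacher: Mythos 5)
Your argument is correct and takes essentially the same route as the paper: the paper presents this proposition as a direct restatement of Sogge's Proposition 2.3 (Dirichlet case) and Smith--Sogge (Neumann case) without offering an independent argument, which is also what your proof reduces to. Your added Cauchy--Schwarz step, identifying $\|\Pi_h\|_{L^1\to L^\infty}=\sup_{x,y}|K_h(x,y)|$ with the diagonal pointwise Weyl bound $e_{x,h}\leq Ch^{1-d}$, is correct but simply makes explicit an equivalence the paper uses implicitly in the reverse direction immediately after stating the proposition.
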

Il suffit ensuite dans le cas g\'en\'eral de recouvrir l'intervalle $(a_h, b_h)$ par un nombre fini (d'ordre $(b_h -a_h )/h$) d'intervalles de taille $h$, d'appliquer la proposition~\ref{prop.spectral} \`a chacun de ces intervalles pour obtenir dans le cas g\'en\'eral
$$ \|\Pi_h\|_{L^1(M) \rightarrow L^\infty(M)} \leq C h^{-d}(b_h - a_h).$$
Puis, le noyau de l'op\'erateur $\Pi_h$ \'etant donn\'e par
$$ K_h (x,y)= \sum_{k\in I_h} e_k(x) \overline{e_k(y)},$$
on obtient
$$\sup_{x\in M}|e_{x,h}|\leq \|K_h\|_{L^\infty(M\times M)} = \|\Pi_h\|_{L^1(M) \rightarrow L^\infty} \leq C h^{-d}(b_h - a_h).$$ 
 On en d\'eduit que les r\'esultats des sections~\ref{sec2.1},~\ref{sec2.3} ainsi que la remarque~\ref{rem.4} restent valides dans ce cadre. Il est possible que les r\'esultats de la section~\ref{sec5} puissent aussi \^etre \'etendus \`a ce cadre, mais la plage des estimations de Strichartz valides dans ce cadre des probl\`emes aux limites \'etant plus r\'estreinte (voir~\cite{BuLePl, Iva, BlSmSo}), cela n\'ecessiterait une analyse plus pr\'ecise.
\appendix

%%%%%%%%%%%%%%%%%%%%%%%%%%%%%%%%%%%%%%%%%%%%%%%%%%%%%%%%%%%%%
\section{Calcul des probabilit\'es sur les sph\`eres}\label{sec5.1}
On note $L(dx)$ la mesure de Lebesgue
sur $\mathbb R^N$ et $p_N$ la probabilité uniforme sur la sphère unité
$S(N)$ de $\mathbb R^N$ . La probabilité $p_N$  est
la mesure image de $\Pi_{1\leq j\leq N} {1\over \sqrt{2\pi}}e^{-\vert x_{j} \vert^2/2}L(dx_{j})$ par
l'application $\pi$
\begin{equation}\label{3.1}
x=(x_1,...,x_N) \mapsto \pi(x)=a=(a_1,...,a_N) \in S(N), \quad a_j={x_{j} \over \sqrt {\sum x_{l}^2}}
\end{equation}
En effet,  $\pi_\ast\big(\Pi_{1\leq j\leq N} {1\over \sqrt{2\pi}}e^{- x_{j}^2/2}L(dx_{j})\big)$
est une probabilité sur $S(N)$ invariante par les isométries de $S(N)$, puisque
pour tout $g\in SO(N)$, $g\pi=\pi g$, et  
$\Pi_{1\leq j\leq N} {1\over \sqrt{2\pi}}e^{-x_{j}^2/2}L(dx_{j})=
(2\pi)^{-N/2}e^{-|x|^2/2}L(dx)$
est invariante par l'action de $SO(N)$. 

Soit $M\geq 1$ fixé . Pour $N_j\geq 1$,  soit $N=N_1+...+N_M$.
La mesure de Lebesgue sur $\mathbb R^N=\Pi_j\mathbb R^{N_j}$ est
donnée dans les coordonnées 
$x=(x_1,...,x_{M}), \  x_{j}=\rho_{j}\omega_{j}\in \mathbb R^{N_j}$, avec $
 \rho_j>0, \  
\omega_{j}\in S(N_j)$ 
par la formule
\begin{equation}\label{3.2}
\Pi_{j=1}^{M} \rho_j^{N_j-1}d\rho_j \otimes c_{N_j} p_{N_j}(\omega_j)
\end{equation}
o\`u $c_{N}$ est le volume de $S(N)$.
En particulier, l'application $\vert \pi\vert_M$, de
$S(N)$ dans la sphère réelle $S(M-1)\subset \mathbb R^M$ 
$$x\mapsto \vert \pi\vert_M (x)=
(\vert x_1\vert, ..., \vert x_M\vert)$$
envoie $p_N$ sur la probabilité
\begin{equation}\label{3.3}
(\vert \pi\vert_M)_\ast p_N={\Pi_{j}c_{N_j}\over c_N}\Pi_{j} \ 
1_{\rho_j\geq 0} \ \rho_j^{N_j-1}d\sigma(\rho)
\end{equation}
où $d\sigma(\rho)$ est la mesure de Lebesque sur $S(M-1)$.

En choisissant $M=2,N_1=2, N_2=N-2$, on a donc pour $x=(x_1,x_{2})\in S(N)$,
et pour $t=\cos\theta_0\in [0,1], \theta_0\in [0,\pi/2]$,
\begin{equation}\label{3.3ter}
p_N(\vert x_1\vert>t)={c_2c_{N-2}\over c_N}\int_0^{\theta_0}\cos\theta
(\sin\theta)^{N-3}d\theta={c_2c_{N-2}\over c_N(N-2)}(\sin\theta_0)^{N-2}
\end{equation}
En choisissant $\theta_0= \pi/2$, on obtient 
\begin{equation}\label{3.3quar}
p_N(\vert x_1\vert>0)={c_2c_{N-2}\over c_N(N-2)} =1
\end{equation}
d'o\`u 
\begin{equation}\label{3.3bis}
p_N(\vert x_1\vert>t)={\bf 1}_{t\in [0,1[}(1-t^2)^{\frac N 2-1}
\end{equation}

Rappelons aussi  le r\'esultat suivant de concentration de la mesure (voir par exemple~\cite[Theorem 2.3 et (1.10), (1.12)]{Led})
 \begin{prop}\label{concentration}
 Considerons une fonction $F$ Lipschitz sur la sph\`ere $\mathbb{S}^d=S(d+1)$ (munie de sa distance g\'eod\'esique naturelle et de la mesure de probabilit\'e uniforme, $\mu$). On d\'efinit sa m\'ediane $\mathcal{M}(F) $ par la relation
 $$ \mu( F \geq \mathcal{M} (F)) \geq \frac 1 2 , \qquad \mu( F \leq \mathcal{M}(F)) \geq \frac 1 2.  $$
 Alors, pour tout $r>0$, 
 \begin{equation}\label{eq.conc}
 \mu ( |F- \mathcal{M} (F)| >r ) \leq 2 e^{-(d-1) \frac {r^2} { 2 \|F\|_{\text{Lips}}^2}}
 \end{equation}
 \end{prop}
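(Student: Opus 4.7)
La strat\'egie que je propose repose sur l'in\'egalit\'e isop\'erim\'etrique sph\'erique de L\'evy-Schmidt, qui affirme que parmi tous les bor\'eliens de $\mathbb{S}^d$ de mesure fix\'ee, ce sont les calottes sph\'eriques qui minimisent la mesure du compl\'ementaire du $s$-voisinage. Cette in\'egalit\'e est le levier classique permettant de passer d'une propri\'et\'e de Lipschitz \`a une concentration gaussienne autour de la m\'ediane.

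Premi\`ere \'etape, je poserais $A=\{F\leq \mathcal M(F)\}$ et $B=\{F\geq \mathcal M(F)\}$; par d\'efinition de la m\'ediane, $\mu(A)\geq 1/2$ et $\mu(B)\geq 1/2$. Deuxi\`eme \'etape, j'utiliserais le caract\`ere lipschitzien de $F$: si $\mathrm{dist}(x,A)< r/\|F\|_{\mathrm{Lips}}$, il existe $y\in A$ avec $d(x,y)<r/\|F\|_{\mathrm{Lips}}$, d'o\`u $F(x)\leq F(y)+r \leq \mathcal M(F)+r$. On en d\'eduit l'inclusion
\[
\{F> \mathcal M(F)+r\} \subset \mathbb{S}^d \setminus A_{r/\|F\|_{\mathrm{Lips}}},
\]
o\`u $A_s$ d\'esigne le $s$-voisinage ouvert de $A$. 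Troisi\`eme \'etape, par l'in\'egalit\'e isop\'erim\'etrique appliqu\'ee \`a $A$ et la monotonie de la mesure d'une calotte en sa taille, on obtient $\mu(A_s^c) \leq \mu(H_s^c)$ pour un h\'emisph\`ere $H$. Derni\`ere \'etape, je calculerais $\mu(H_s^c)$ via la formule explicite du volume d'une calotte sph\'erique et l'estimation \'el\'ementaire $\cos u\leq e^{-u^2/2}$, ce qui fournit
\[
\mu(H_s^c) \leq \exp\bigl(-(d-1)s^2/2\bigr).
\]
En prenant $s=r/\|F\|_{\mathrm{Lips}}$, on obtient la majoration de $\mu(F> \mathcal M(F)+r)$; le m\^eme argument appliqu\'e \`a $B$ fournit la queue inf\'erieure, et la somme des deux donne le facteur $2$ au second membre de \eqref{eq.conc}.

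Le point d\'elicat de cette d\'emarche est l'in\'egalit\'e isop\'erim\'etrique sph\'erique elle-m\^eme, r\'esultat profond de L\'evy-Schmidt dont la preuve (par sym\'etrisation \`a deux points) d\'epasse le cadre d'une simple v\'erification. En revanche, la r\'eduction de la concentration lipschitzienne \`a cette in\'egalit\'e est tout \`a fait formelle, et l'estimation gaussienne pour le volume de la calotte est un calcul \'el\'ementaire (changement de variable $\theta=\pi/2+u$ puis comparaison \`a l'int\'egrale gaussienne). On notera que cette approche fournit la constante optimale $(d-1)/2$ dans l'exposant, en accord avec le fait que la courbure de Ricci de $\mathbb{S}^d$ muni de sa m\'etrique standard vaut $d-1$, ce qui est coh\'erent avec l'apparition du facteur $N_h-1$ dans les estim\'ees \eqref{2.2.12} et \eqref{2.2.12bis} o\`u cette proposition est appliqu\'ee.
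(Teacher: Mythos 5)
Votre démonstration est correcte dans son principe et reproduit exactement la démarche de la référence~\cite[Theorem 2.3, (1.10), (1.12)]{Led} que l'article se contente de citer: inégalité isopérimétrique de Lévy--Schmidt, passage de la concentration des voisinages à celle des fonctions lipschitziennes via $\{F>\mathcal M(F)+r\}\subset A_{r/\|F\|_{\text{Lips}}}^c$, puis majoration gaussienne du volume d'une calotte. La seule étape à soigner dans l'esquisse du calcul de calotte est d'obtenir le facteur $e^{-(d-1)s^2/2}$ sans préfacteur parasite, ce qui exige, outre $\cos u\leq e^{-u^2/2}$, une minoration du dénominateur $\int_0^{\pi/2}\cos^{d-1}u\,du$ (du type Gautschi/Wallis); c'est effectivement traité dans la référence citée.
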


%%%%%%%%%%%%%%%%%%%%%%%%%%%%%%%%%%%%%%%%%%%%%%%%%%%%%%%%%
\section{Calcul $h$-pseudodiff\'erentiel}\label{sec5.2}
Nous rappelons ici les bases du calcul  
$h$-pseudo-diff\'erentiel sur $M$, pour lesquelles nous renvoyons \`a
\cite{M}.
Pour $m\in \mathbb R$, soit $S^m$ l'espace des  fonctions $a(x,\xi,h)$ de classe
$C^\infty$ en  $(x,\xi)\in \mathbb R^{2d}$, 
d\'ependantes du  param\`etre $h\in ]0,1]$ telles que pour tout $\alpha, \beta$, il existe  $C_{\alpha,\beta}$ tel que pour tout 
$(x,\xi)\in \mathbb R^{2d}$ et tout  $h\in ]0,1] $
on a
\begin{equation}\label{5.2.1}
\vert \partial^\alpha_x\partial^\beta_\xi a(x,\xi,h)\vert \leq C_{\alpha,\beta}(1+\vert \xi\vert)^{m-\vert\beta\vert}
\end{equation} 
Pour $a\in S^m$, on note $Op(a)$ l'op\'erateur h-pseudodiff\'erentiel agissant sur l'espace de Schwartz  $\mathcal S (\mathbb R^d)$
\begin{equation}\label{5.2.2}
Op(a)(f)(x)=(2\pi h)^{-d}\int e^{i(x-y)\xi/h}a(x,\xi,h)f(y)dyd\xi
\end{equation} 
Rappelons que pour $a\in S^0$, l'op\'erateur $Op(a)$ est uniform\'ement en $h$ 
born\'e sur $L^2(\mathbb R^d)$, et que pour 
$a\in S^m, b\in S^k$, on a $Op(a)Op(b)=Op(c)$ o\`u  $c=a\sharp b\in S^{m+k}$ est donn\'e par l'int\'egrale   oscillante 
\begin{equation}\label{5.2.3}
c(x,\xi,h)=(2\pi h)^{-d}\int e^{-iz\theta/h}a(x,\xi+\theta,h)b(x+z,\xi,h)dzd\theta
\end{equation}
et admet le d\'eveloppement  asymptotique 
\begin{equation}\label{5.2.4}
c(x,\xi,h)=\sum_{\vert \alpha\vert <N}{h^{\vert\alpha\vert}\over i^{\vert\alpha\vert}\alpha !}\partial^\alpha_\xi a(x,\xi,h)\partial^\alpha_x b(x,\xi,h)
+ h^Nr_N(x,\xi,h), \quad r_N\in S^{m+l-N} 
\end{equation}
Le sous espace $S^m_{cl}$ de $S^m$ est l'ensemble des $a(x,\xi,h)\in S^m$ tels 
qu'il existe une suite $a_n(x,\xi)\in S^{m-n}, n\geq 0$ 
telle que pour tout $N$, on a
\begin{equation}\label{5.2.1bis}
a(x,\xi,h)=\sum_{0\leq n<N}(h/i)^na_n(x,\xi) +h^Nr_N(x,\xi,h), \quad r_n\in S^{m-N}
\end{equation} 
D'apr\`es \eqref{5.2.4}, on a $a\sharp b\in S^{m+k}_{cl}$ pour $a\in S^{m}_{cl}$ 
et $b\in S^{k}_{cl}$.

Soit  $e_j(x)\in C^\infty(M), j\geq 0$ une base orthonormale dans 
$L^2(M,d_gx)$ de fonctions propres de  $-\mathbf{\Delta}$ avec
 $-\mathbf{\Delta} e_j=\omega_j^2 e_j$.  Pour toute distribution $f\in \mathcal D'(M)$,   on pose $c_j(f)=\int fe_j d_gx$ de sorte qu'on a $f(x)=\sum_j c_j(f)e_j(x)$, 
la  s\'erie \'etant convergente dans $\mathcal D'(M)$. Pour $s\in \mathbb R$, soit $H^s(M)=(1-\mathbf{\Delta}_g)^{-s/2}L^2(M,d_gx)$
l'espace de Sobolev usuel sur $M$. Pour $f\in \mathcal D'(M)$ on a $f\in H^s(M)$ ssi 
 $\Vert f\Vert^2_{H^s(M)}=\sum_j (1+\omega_j^2)^{s}\vert c_j(f)\vert^2<\infty$.  On utilise aussi les normes semi-classiques $H^s$ d\'efinies  par
 
 \begin{equation}\label{5.2.scl}
  \Vert f\Vert^2_{h,s}=\sum_j (1+h^2\omega_j^2)^{s}\vert f_j\vert^2
  \end{equation}
Une famille d'op\'erateurs $R_h$, $h\in ]0,1]$, op\'erant sur $\mathcal D'(M)$ est d\^ite r\'egularisante ssi pour tout  $s,t,N$,  $R_h$ envoie $H^s(M)$ dans$H^t(M)$ et il existe  $C_{s,t,N}$ tel que pour tout  $h\in ]0,1]$ on a 
 \begin{equation}\label{5.2.5} 
\Vert R_h(f)\Vert_{H^t(M)} \leq C_{s,t,N}h^N\Vert R_h(f)\Vert_{H^s(M)}
 \end{equation} 
Une famille d'op\'erateurs $A_h$, $h\in ]0,1]$ agissant sur $\mathcal D'(M)$, 
appartient \`a l'espace $\mathcal E^m_{cl}$ des op\'erateurs $h$-pseudodiff\'erentiels d'ordre $m$,
ssi pour tout $x_0\in M$, il existe un ouvert de carte $U$ centr\'e
 en $x_0$ et deux fonctions $\varphi, \psi \in C_0^\infty(U)$ \'egales \`a  $1$ pr\`es de  $x_0$ avec  $\psi$ \'egale \`a $1$ pr\`es du support de 
 $\varphi$ telles que $A_h\varphi=\psi A_h\varphi +R_h$, avec  $R_h$ r\'egularisant et il existe $a\simeq \sum_{n\geq 0}
 (h/i)^na_n(x,\xi) \in S^m_{cl}$, tel que dans l'ouvert de carte
 $U$, on a  $\psi A_h\varphi=Op(a)$. Le symbole principal de $A_h$, $\sigma_0(A_h)(x,\xi)$,  est par d\'efinition le premier terme 
  $a_0(x,\xi)$ du d\'eveloppement asymptotique de $a(x,\xi,h)$. C'est une fonction intrins\`eque sur $T^*M$,
et pour toute fonction lisse $\varphi \in C^\infty(M)$, on a
  \begin{equation}\label{5.2.6} 
e^{-i\varphi(x)/h}A_h(e^{i\varphi(x)/h}) =\sigma_0(A_h)(x,d\varphi(x))+\mathcal O(h) 
 \end{equation} 
Le support essentiel
de $A_h$ est dit contenu dans le compact $K$ de $T^*M$ si on a avec 
les notations pr\'ec\'edentes et dans tout ouvert de carte $U$, 
$a\in S^{-\infty}$, et  $a_n(x,\xi)$ est \`a support dans $K$ pour tout $n$. 

\noindent
$\mathcal E_{cl}=\cup_m\mathcal E^m_{cl}$ est l'alg\`ebre des op\'erateurs 
$h$-pseudodiff\'erentiels classiques sur $M$. Pour  $A_h\in \mathcal E^m_{cl}$ et
$B_h\in \mathcal E^k_{cl}$, on a $A_hB_h\in \mathcal E^{m+k}_{cl}$, $\sigma_0(A_hB_h)=\sigma_0(A_h)\sigma_0(B_h)$ et
le commutateur $[A_h,B_h]=A_hB_h-B_hA_h$ v\'erifie $[A_h,B_h]\in h\mathcal E^{m+k-1}_{cl}$,  $\sigma_0({i\over h}[A_h,B_h])=\{\sigma_0(A_h),\sigma_0(B_h)\}$ o\`u
$\{f,g\}$ est le crochet de Poisson . De plus, 
pour tout $A_h\in \mathcal E^m_{cl}$, on a 
$A^*_h\in \mathcal E^m_{cl}$, $\sigma_0(A^*_h)=\overline {\sigma_0(A_h)}$, et 
pour tout $s\in \mathbb R$, 
il existe $C_s$ ind\'ependant de $h\in ]0,1]$ 
tel que
 \begin{equation}\label{5.2.sclbis}
  \Vert A_hf\Vert_{h,s-m} \leq C_s   \Vert f\Vert_{h,s} \quad \forall f\in H^s(M)
  \end{equation}
De plus, si  $A_{h}\in \mathcal E^{-\infty}_{cl}=\cap_m\mathcal E^m_{cl}$ ,
il existe $C$ ind\'ependant de $h\in ]0,1]$ et $p\in [1,\infty]$ tel que

 \begin{equation}\label{5.2.sclter}
  \Vert A_hf\Vert_{L^p(M)} \leq C   \Vert f\Vert_{L^p(M)} \quad \forall f\in L^p(M)
  \end{equation}

\bigskip
\noindent
Rappelons enfin que pour tout $\phi \in C_0^\infty([0,\infty[)$, 
l'op\'erateur $\phi(-h^2\mathbf{\Delta})$ d\'efini par
  \begin{equation}\label{5.2.7} 
\phi(-h^2\mathbf{\Delta})(f)=\sum_j\phi(h^2\omega_j^2)c_j(f)e_j(x)
 \end{equation} 
 appartient \`a  $\mathcal E^{-\infty}_{cl}$, et que son symbole principal  est
 
   \begin{equation}\label{5.2.7bis} 
\sigma_0(\phi(-h^2\mathbf{\Delta}_g))=\phi(\vert \xi\vert_x^2)
 \end{equation} 
o\`u $\vert \xi\vert_x$ est la longueur riemannienne du covecteur $\xi$ en $x$.  Pour une preuve de ce fait, on renvoie \`a \cite{DimassiSjostrand99}.
\section{Quelques propri\'et\'es des mesures} 

\subsection{Mesures sur l'espace d'\'energie}\label{sec.4.3}
Soit $0<a_{0}<a_{1}<...$ une suite strictement croissante telle que 
$\lim_{k\rightarrow \infty}a_{k}=\infty$, et telle qu'il existe $C>1$ tel que
\be
\forall k\geq 0, \quad a_{k+1}\leq Ca_{k}
\ee
On pose $a_{-1}=-1$ et pour tout $k\geq 0$
\be
E_{k}=\{u \in L^2(M); u(x)=\sum_{j\in I_{k}}z_{j}e_{j}(x)\}, 
\quad I_{k}=\{j; a_{k-1}<\omega_{j}\leq a_{k}\}
\ee 
On a alors $L^2(M)=\oplus_{k\geq 0}E_{k}$ et pour $u=\sum_{k\geq 0}u_{k}, u_{k}\in E_{k}$
et tout $s\geq 0$, quitte \`a augmenter $C$ pour avoir $1+a_{0}^2\leq C^2$
\be
C^{-2s}\sum_{k\geq 0}(1+a_{k}^2)^s \Vert u_{k} \Vert_{L^2}^2\leq \Vert u \Vert_{H^s}^2\leq \sum_{k\geq 0}(1+a_{k}^2)^s \Vert u_{k} \Vert_{L^2}^2
\ee
Ces in\'egalit\'es sont renvers\'ees pour $s\leq 0$.

On se donne une suite $(\alpha_{k})_{k\geq 0}$ de r\'eels positifs. On se donne \'egalement pour tout $k\in \mathbb{N}$ une mesure de probabilit\'e sur $\mathbb{R}^+$, $p_k(r)$. On supposera qu'il existe $\gamma >0$ tel que 
 \begin{equation*}\tag{$H_\gamma$}\label{eq.decroi} \exists C, c>0; \forall k \in \mathbb{N},
 \int_\rho^{+\infty}  dp_k(r) \leq C e^{- c \rho^\gamma}
 \end{equation*}
 Par convention, on supposera que $H_{\infty}$ est v\'erifi\'ee si les mesures $dp_k$ sont support\'ees dans un compact fixe (ind\'ependant de $k$). Par exemple $dp_k^i = \delta_{r=r_0}$ v\'erifie $H_\infty$ et  $dp_k^i = \sqrt{\frac 2 \pi} e^{- r^2 /2} dr$ v\'erifient $H_2$.
On note  $q_k$ l'image de $p_{k}$ par l'application $r\mapsto \alpha_k r$.
On munit $E_{k}$ de la norme $L^2(M)$, on note $S_k$ sa sph\`ere unit\'e, et ${P}_k$ la probabilit\'e uniforme sur~$S_k$. On note $\nu_{k}$ la probabilit\'e sur $E_{k}$ image de $q_k \otimes {P}_k$
par l'application $(r,\omega)\mapsto r\omega$ de $\mathbb{R}^+ \times S_k$ dans $E_k$. 
 
On notera $P$ la mesure de probabilit\'e d\'efinie sur l'espace $\Pi_{k\geq 0}E_{k}$
par 
 $$P = \otimes_{k} \nu_k, $$
 et $$\|(\alpha_k)\|^2_s= \sum_{k} \alpha_k ^2 ( 1+ a_{k}^{2})^s \in [0, +\infty].
 $$
On identifiera la suite $U= (u_k)\in \Pi_{k\geq 0}E_{k}$ avec la somme de la s\'erie $u= \sum_k u_k$ (on sera par la suite toujours dans un cadre o\`u cette s\'erie converge dans $\mathcal{D}'(M)$). On notera $\mathcal{M}_s$ l'ensemble des mesures de probabilit\'e ainsi d\'efinies  quand $(\alpha_k)$ d\'ecrit l'ensemble des suites v\'erifiant $\|(\alpha_k)\|_s<+\infty$. 
 On a alors 
 \begin{prop}
 Supposons que 
 $$ \|(\alpha_k)\|_s<+\infty.$$
 Alors la mesure de probabilit\'e $P$ est support\'ee par $H^s(M)$.  R\'eciproquement, supposons que 
  \begin{equation}\label{eq.infini2} 
  \|(\alpha_k)\|_s =+\infty
  \end{equation}
  et que les mesures $p_k$ ne se concentrent pas en $0$:
  \begin{equation}\label{eq.infini}
  \exists \rho>0, \delta<1; \forall k\in \mathbb{N}, p_k ([0,\rho)) \leq \delta
  \end{equation}
  (on remarquera que cette derni\`ere condition est toujours v\'erifi\'ee si les mesures $p_k$ sont identiquement distribu\'ees et non \'egales \`a $\delta_{r=0}$).
  Alors 
  $$ P( \{U= (u_k); \sum_k \|u_k\|_{H^s(M)}^2 <+\infty\}) =0.
  $$
 \end{prop}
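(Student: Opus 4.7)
Le plan est de traiter s\'epar\'ement les deux implications. Pour le sens direct, l'id\'ee est d'utiliser le th\'eor\`eme de Fubini et l'\'equivalence d\'ej\`a \'etablie
\[
\Vert u\Vert_{H^s(M)}^2 \simeq \sum_k (1+a_k^2)^s \Vert u_k\Vert_{L^2}^2.
\]
Sous $\nu_k$, l'\'el\'ement $u_k = \alpha_k \tilde r_k \omega_k$ v\'erifie $\Vert u_k\Vert_{L^2}^2 = \alpha_k^2 \tilde r_k^2$, o\`u $\tilde r_k$ suit la loi $p_k$. L'hypoth\`ese $(H_\gamma)$ assure que $C := \sup_k \mathbb E[\tilde r_k^2]$ est fini (les queues \'etant uniform\'ement major\'ees par $e^{-cr^\gamma}$, ou bien les mesures \'etant \`a supports compacts uniform\'ement born\'es). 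On obtient alors, par int\'egration terme \`a terme,
\[
\mathbb E\bigl[\Vert u\Vert_{H^s}^2\bigr] \leq C' \sum_k (1+a_k^2)^s \alpha_k^2 = C' \Vert (\alpha_k)\Vert_s^2 < +\infty,
\]
d'o\`u $P(u\in H^s(M)) = 1$.

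Pour le sens r\'eciproque, je poserais $b_k = (1+a_k^2)^s \alpha_k^2$ et $X_k = \tilde r_k^2$. Sous les hypoth\`eses, $\sum_k b_k = +\infty$ et les $X_k$ sont ind\'ependantes, positives, avec $P(X_k \geq \rho^2) \geq 1 - \delta$ d'apr\`es \eqref{eq.infini}. Il s'agit de montrer que $\sum_k b_k X_k = +\infty$ presque s\^urement, ce qui entra\^inera imm\'ediatement la divergence presque s\^ure de $\sum_k \Vert u_k\Vert_{H^s}^2$. Je m'appuierais sur le th\'eor\`eme des trois s\'eries de Kolmogorov, qui pour des variables ind\'ependantes positives se r\'eduit \`a l'\'equivalence: la s\'erie $\sum_k b_k X_k$ converge presque s\^urement si et seulement si $\sum_k \mathbb E[\min(b_k X_k, 1)] < +\infty$. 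La minoration
\[
\mathbb E[\min(b_k X_k, 1)] \geq (1-\delta)\min(b_k\rho^2, 1),
\]
combin\'ee \`a l'observation que $\sum_k \min(b_k\rho^2, 1) = +\infty$ d\`es que $\sum_k b_k = +\infty$ (soit il y a une infinit\'e de $b_k \geq 1/\rho^2$, soit $\sum_{b_k < 1/\rho^2} b_k = +\infty$), fournit la divergence souhait\'ee.

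Le point d\'elicat est la partie r\'eciproque. En effet, le fait que $\mathbb E[\sum_k b_k X_k] = +\infty$ n'implique pas en g\'en\'eral la divergence presque s\^ure de la s\'erie (une variable al\'eatoire peut \^etre presque s\^urement finie mais d'esp\'erance infinie). Il est donc indispensable d'exploiter simultan\'ement l'ind\'ependance des $X_k$ et une troncature, \`a travers le th\'eor\`eme des trois s\'eries; c'est pr\'ecis\'ement l'hypoth\`ese de non-concentration \eqref{eq.infini} qui rend cet outil op\'erationnel, en emp\^echant les $X_k$ d'\^etre arbitrairement petits avec forte probabilit\'e et en donnant ainsi une minoration uniforme des contributions $\min(b_k X_k, 1)$.
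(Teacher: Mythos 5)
Your proposal is correct, and the direct implication coincides with the paper's (Fubini plus the uniformly bounded second moments guaranteed by $(H_\gamma)$). For the converse you take a genuinely different route. Setting $b_k=(1+a_k^2)^s\alpha_k^2$ and $X_k=\tilde r_k^2$, you invoke the Kolmogorov three-series criterion in its form for nonnegative independent variables, namely that $\sum_k Y_k<\infty$ a.s.\ if and only if $\sum_k \mathbb E[\min(Y_k,1)]<\infty$, together with the pointwise minoration $\mathbb E[\min(b_kX_k,1)]\geq (1-\delta)\min(b_k\rho^2,1)$ coming from the non-concentration hypothesis, and the elementary remark that $\sum_k\min(b_k\rho^2,1)=\infty$ whenever $\sum_k b_k=\infty$. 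The paper instead computes the Laplace transform $\mathbb E[e^{-t\|U\|_{H^s}^2}]$ as the infinite product $\prod_k \mathbb E_k[e^{-t\|u_k\|_{H^s}^2}]$, bounds each factor using the non-concentration hypothesis, and shows that this product diverges to $0$; it must separately dispose of the case where $\alpha_k^2(1+a_k^2)^s$ does not tend to $0$ before exploiting the first-order expansion $1-e^{-x}\sim x$. Both arguments rest on exactly the same two ingredients (independence of the blocks, non-concentration of $p_k$ at the origin). Your route avoids the paper's case split and outsources the convergence analysis to a classical theorem; the paper's route is more elementary and self-contained (no appeal to the three-series theorem), at the cost of an explicit Laplace-transform computation.
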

\begin{proof}
 On calcule d'abord
\begin{multline} \mathbb{E} (\|U\|_{H^s(M)}^2) = \mathbb{E}(\sum_k \|u_k\|_{H^s(M)}^2 )= \sum_k \mathbb{E}_k (\|u_k\|_{H^s(M)}^2 )\\
\leq C_{s}\sum_k  (1+ a_k ^{2}) ^s \int_{r=0}^{+\infty} r^2 dq_k  \leq C_{s} \sum_k  
(1+ a_k ^{2}) ^s \alpha_k^2 <+\infty
\end{multline}
ce qui d\'emontre que $\|u\|_{H^s}$ est finie presque surement. 
R\'eciproquement, sous les hypoth\`eses~\eqref{eq.infini2} et~\eqref{eq.infini}, on a 
\begin{multline}\label{eq.prodinf}
\mathbb{E} ( e^{-t \|U\|_{H^s}^2}) = \prod_{k=1}^{+\infty} \mathbb{E}_k ( e^{-t \|u_k\|_{H^s}^2})
\leq \prod_{k=1}^{+\infty} \int_0^{+\infty} e^{-c_{s}t r^2 (1+a_k^{2})^s} d q_k\\
= \prod_{k=1}^{+\infty} \int_0^{+\infty} e^{-c_{s}t r^2 \alpha_k^2 (1+a_k^{2})^s} d p_k
\leq \prod_{k=1}^{+\infty} \Bigl(p_k ([0, \rho) + e^{-c_{s}t \rho^2  \alpha_k^2 (1+a_k^{2})^s} (1- p_k([0, \rho)) \Bigr)\\
\leq \prod_{k=1}^{+\infty} \Bigl[ 1- \bigl(1-p_k ([0, \rho)\bigl) \bigl(1- e^{-c_{s}t\rho^2  \alpha_k^2 (1+a_k^{2})^s}\bigl)\Bigr].
\end{multline}
 On remarque maintenant qu'on peut supposer 
 $$\alpha_k^2 (1+a_k^{2})^s\rightarrow_{k\rightarrow+\infty} 0$$
 (car sinon le produit infini est clairement nul et le r\'esultat est \'evident), et  comme 
 $$ 1- e^{-c_{s}t\rho^2  \alpha_k^2 (1+a_k^{2})^s}\sim c_{s}t\rho^2  \alpha_k^2 (1+a_k^{2})^s
 $$
 on a 
 $$ \Bigl[ 1- \bigl(1-p_k ([0, \rho)\bigl) \bigl(1- e^{-c_{s}t\rho^2  \alpha_k^2 (1+a_k^{2})^s}\bigl)\Bigr]\sim \Bigl[ 1-\bigl(1-p_k ([0, \rho)\bigl)c_{s}t\rho^2  \alpha_k^2 (1+a_k^{2})^s\bigl)\Bigr],
 $$ et donc en prenant le logarithme dans~\eqref{eq.prodinf}, on obtient, d'apr\`es~\eqref{eq.infini}, que
 le produit infini est divergent vers $0$. On obtient donc
 $$\mathbb{E} ( e^{-t \|U\|_{H^s}^2})=0,
 $$
 et donc, $P$-presque surement, 
 $\|U\|_{H^s}^2 = + \infty$.
\end{proof}

\subsection{Crit\`ere d'orthogonalit\'e}
Le crit\`ere suivant est du \`a Kakutani~\cite{Ka}
\begin{thm*}[Kakutani]
On consid\`ere deux mesures $\mu_1, \mu_2$ associ\'ees au m\^eme choix de la suite $(a_{k})$, mais \`a des suites $(\alpha_{k,1}), (\alpha_{k,2})$ et $(dp_{k,1}), (dp_{k,2})$ \`a priori diff\'erentes. On rappelle que les mesures $dq_{k,j}$ sont les images des mesures $dp_{k,j}$  par l'application $r\mapsto \alpha_{k,j} r$. Alors les mesures $\mu_1$ et $\mu_2$ correspondantes sont absoluement continues l'une par rapport \`a l'autre si et seulement si le produit infini 
\begin{equation}
\label{eq.prod}
\prod_{k=1}^{\infty} \int_0^{+\infty} \sqrt{ dq_{k,1} dq_{k,2}}
\end{equation}
 est convergent, c'est \`a dire (on remarquera que d'apr\`es l'in\'egalit\'e de Cauchy-Schwarz, chacun des termes dans le produit infini est inf\'erieur \`a $1$) qu'il est non nul.
De plus, si ce produit infini est divergent, alors les mesures $\mu_1$ et $\mu_2$ sont \'etrang\`eres : il existe un ensemble $A$ de $\mu_1$-mesure $1$ et de $\mu_2$-mesure $0$. 
\end{thm*}
Ce crit\`ere garantit que pour \og la plupart \fg des choix $\alpha_k, dp_k$, les mesures obtenues sont mutuellement \'etrang\`eres: par exemple, si on choisit 
 $$dp_{k,1}=dp_{k,2} = 1_{r>0}\sqrt{ \frac 2 \pi} e^{- \frac {r^2} 2} dr,
 $$
 un calcul simple~\cite[Appendix B.1]{BuTz3} montre que le produit~\eqref{eq.prod} est non nul si et seulement si 
 $$\alpha_{k,1} = 0 \Leftrightarrow \alpha_{k,2}=0  \text{ et }\sum_k \Bigl( \frac{ \alpha_{k,1}} {\alpha_{k,2}}-1 \Bigr) ^2 <+\infty
 $$
 \subsection{Densit\'e du support}
\begin{prop}On suppose que les mesures $dp_k$ chargent tous les ouverts de l'intervalle $]0,+\infty[$, que $\|(\alpha_k)\|_s<+\infty$, et que tous les coefficients $\alpha_k$ sont non nuls. Alors le support de la mesure $\mu$ associ\'ee est $H^s(M)$:
$$\forall v \in H^s(M), \forall \epsilon>0, \mu( \{ u\in H^s(M); \|u-v\|<\epsilon \}) >0
$$
\end{prop}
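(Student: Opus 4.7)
L'id\'ee sera de minorer $\mu(B(v,\epsilon))$ par le produit d'un contr\^ole \`a la Tchebychev des hautes fr\'equences et d'un nombre fini de probabilit\'es non nulles concernant les basses fr\'equences. On d\'ecomposera $v=\sum_k v_k$ avec $v_k\in E_k$, en utilisant l'\'equivalence $\|u\|_{H^s}^2 \sim \sum_k (1+a_k^2)^s \|u_k\|_{L^2}^2$ \'etablie au d\'ebut de cet appendice, et on travaillera composante par composante gr\^ace \`a la structure produit de $\mu=\otimes_k\nu_k$.

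Premi\`erement, on choisira un entier $K$ assez grand pour que les queues $\sum_{k>K}(1+a_k^2)^s\|v_k\|_{L^2}^2$ (finie car $v\in H^s$) et $\sum_{k>K}(1+a_k^2)^s\alpha_k^2$ (finie car $\|(\alpha_k)\|_s<\infty$) soient toutes deux inf\'erieures \`a un petit multiple de $\epsilon^2$. Sous l'hypoth\`ese implicite $(H_\gamma)$ (ou toute autre hypoth\`ese donnant $\int r^2 dp_k \leq C$ uniform\'ement en $k$), l'in\'egalit\'e de Bienaym\'e-Tchebychev appliqu\'ee \`a la variable al\'eatoire positive $\sum_{k>K}(1+a_k^2)^s\|u_k\|_{L^2}^2$, dont l'esp\'erance est major\'ee par $C\sum_{k>K}(1+a_k^2)^s\alpha_k^2$, fournira un \'ev\'enement $\mathcal T$ contr\^olant la queue hilbertienne, de probabilit\'e $\geq 1/2$.

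Deuxi\`emement, pour chaque $k\leq K$, on consid\'erera la boule ouverte $A_k=\{u_k\in E_k:\|u_k-v_k\|_{L^2}<\eta_k\}$ pour un $\eta_k>0$ \`a pr\'eciser. L'\'etape cruciale, et le principal obstacle, sera de v\'erifier que $\nu_k(A_k)>0$ : comme $\nu_k$ est l'image de $q_k\otimes P_k$ par l'application $(r,\omega)\mapsto r\omega$, si $v_k\neq 0$ on se ram\`ene \`a un voisinage ouvert du point $(\|v_k\|_{L^2}, v_k/\|v_k\|_{L^2})$ dans $]0,\infty[\times S_k$, qui est de mesure strictement positive puisque $q_k$ charge tout ouvert de $]0,\infty[$ (cons\'equence directe de $\alpha_k\neq 0$ et de l'hypoth\`ese de support dense sur $p_k$) et $P_k$ charge tout ouvert de la sph\`ere finie-dimensionnelle $S_k$. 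Le cas $v_k=0$ se traite directement en observant que $A_k\supset\{\|u_k\|_{L^2}<\eta_k\}$ a pour pr\'eimage $\{r<\eta_k\}\times S_k$.

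Il restera enfin \`a assembler les morceaux : par ind\'ependance des composantes sous $\mu$, on aura
$$\mu\Bigl(\mathcal T\cap\bigcap_{k\leq K}\{u_k\in A_k\}\Bigr) = P(\mathcal T)\prod_{k\leq K}\nu_k(A_k) > 0,$$
et l'in\'egalit\'e triangulaire dans la somme hilbertienne $L^2=\oplus_k E_k$, combin\'ee \`a $\|u_k-v_k\|^2\leq 2(\|u_k\|^2+\|v_k\|^2)$ pour $k>K$, montrera que cet \'ev\'enement est inclus dans $B(v,\epsilon)$ pourvu que les diff\'erents param\`etres $\delta, \eta_0,\dots,\eta_K$ aient \'et\'e choisis assez petits en fonction de $\epsilon$, de $K$, et de la constante d'\'equivalence entre les normes hilbertiennes.
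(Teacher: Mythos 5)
Votre esquisse est correcte et suit l'approche naturelle pour ce type d'\'enonc\'e de support : on exploite la structure produit de $\mu=\otimes_k\nu_k$, on contr\^ole la queue hilbertienne par Bienaym\'e-Tchebychev (l'hypoth\`ese standing $(H_\gamma)$ de l'appendice garantit bien $\sup_k\int r^2\,dp_k<\infty$, donc $\mathbb E\bigl[\sum_{k>K}(1+a_k^2)^s\|u_k\|_{L^2}^2\bigr]\leq C\sum_{k>K}(1+a_k^2)^s\alpha_k^2\to 0$), et on montre $\nu_k(A_k)>0$ pour chaque bloc fini en relevant la boule ouverte $A_k$ en un ouvert de $]0,\infty[\times S_k$ de mesure $q_k\otimes P_k$ strictement positive. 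L'ind\'ependance permet alors de conclure, apr\`es ajustement des param\`etres $\delta$, $K$, $\eta_k$ en fonction de $\epsilon$ et de la constante d'\'equivalence $C^{2s}$ des normes $H^s$.

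Le papier ne donne pas de preuve de cette proposition : il renvoie \`a~\cite[Appendix B.2]{BuTz3} pour un argument dans un cadre \og tr\`es l\'eg\`erement diff\'erent\fg. Votre d\'emonstration comble donc les d\'etails avec le sch\'ema attendu, et je n'y vois pas de lacune. Un point mineur de r\'edaction : l'ordre des choix doit \^etre $\epsilon\to\delta\to K\to(\eta_k)_{k\leq K}$ (d'abord fixer $\delta$ de sorte que $2\delta^2<\epsilon^2/3$, puis $K$ assez grand pour que la queue de $v$ et la borne de Tchebychev soient satisfaites, puis les $\eta_k$), ce que votre conclusion sugg\`ere d\'ej\`a mais qu'il faudrait expliciter dans une r\'edaction d\'efinitive.
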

On renvoit \`a~\cite[Appendix B.2]{BuTz3} pour une preuve de ce r\'esultat dans un cadre tr\`es l\'eg\`erement diff\'erent. 
\subsection{Estim\'ees de grandes d\'eviations}\label{sec.6.3}
Nous allons d\'emontrer des estim\'ees de grandes d\'eviations pour les mesures sur l'espace $H^s(M)$. On notera $\langle t \rangle= (1+ t^2)^{1/2}$.  
\begin{prop}\label{prop.gdedev} 
 Soit $\mathbb{P}\in \mathcal{M}_s$. On suppose que la suite $p_k$ utilis\'ee pour construire $\mathbb{P}$ v\'erifie l'hypoth\`ese~\eqref{eq.decroi} de la page~\pageref{eq.decroi}, pour $\gamma>0$. On suppose aussi qu'il existe $D>0$ tel que pour tout $k$ assez grand on ait
  $a_{k+1}-a_{k}\geq D$. Alors il existe $D_{0}$ tel que pour $D\geq D_{0}$,
  tout $2\leq p<+\infty$ et tout $\delta > \frac 1 p$, il existe $c>0$ tel que 
 \begin{equation}\label{eq.gdedev3}
 \mathbb{P}( \{ u= \sum_k u_k ; \| u\|_{H^s}>\lambda \}) \leq  2e^{-c\bigl(\frac{\lambda}{\|\alpha_k\|_s} \bigr) ^{\frac{\gamma}{\gamma+ 1}}}
 \end{equation}
 \begin{equation}\label{eq.gdedev4}
 \mathbb{P}( \{ u= \sum_k u_k ; \|\langle t\rangle ^{-\delta} \cos( t \sqrt{ - \mathbf{\Delta}}) u\|_{W^{s,p}(\mathbb{R} \times M)}>\lambda \}) \leq 2 e^{-c\bigl(\frac{\lambda}{\|\alpha_k\|_s} \bigr) ^{\frac{\gamma}{\gamma+ 1}}}
 \end{equation}
\begin{equation}\label{eq.gdedev5}
 \mathbb{P}( \{ u= \sum_k u_k ; \|\langle t\rangle ^{-(\delta+1)} \frac{\sin( t \sqrt{ - \mathbf{\Delta}})}{\sqrt{ - \mathbf{\Delta}}} u\|_{W^{s+1,p}(\mathbb{R} \times M)}>\lambda \}) \leq 2 e^{-c\bigl(\frac{\lambda}{\|\alpha_k\|_s} \bigr) ^{\frac{\gamma}{\gamma+ 1}}}
 \end{equation} 
 \end{prop}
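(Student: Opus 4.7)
Expand each random $u$ as $u = \sum_{k} u_{k}$ with $u_{k} = \alpha_{k}\tilde{r}_{k}\omega_{k}$, where $\tilde{r}_{k} \sim p_{k}$ satisfies the stretched-exponential tail $\mathbb{P}(\tilde{r}_{k} > \rho) \le C e^{-c\rho^{\gamma}}$, $\omega_{k}$ is uniformly distributed on the unit sphere $S_{k}$ of $E_{k}$, and the pairs $(\tilde{r}_{k},\omega_{k})$ are independent. Set $\beta_{k} := \alpha_{k}(1+a_{k}^{2})^{s/2}$, so that $N := \|(\alpha_{k})\|_{s} = (\sum_{k} \beta_{k}^{2})^{1/2}$. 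The plan for all three estimates is the same: control each block individually by combining the spherical concentration of measure of Appendix~A with the radial tail of $\tilde{r}_{k}$, and then aggregate via a moment inequality and Markov.

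For each norm $F$ in (1)--(3), define on $S_{k}$ the function $F_{k}$ that measures the contribution of a unit vector $\omega \in S_{k}$: namely $F_{k}(\omega) = \|\omega\|_{H^{s}(M)}$ in case (1), $F_{k}(\omega) = \|\langle t\rangle^{-\delta}\cos(t\sqrt{-\Delta})\omega\|_{W^{s,p}(\mathbb{R}\times M)}$ in case (2), and the analogous expression with the sine propagator in case (3). First I would establish that $F_{k}$ is Lipschitz on $S_{k}$ with Lipschitz constant $\le C(1+a_{k}^{2})^{s/2}$ and that its median $\mathcal{M}_{k}$ is of the same order. For (1) this is immediate, since the $H^{s}$-norm on $E_{k}$ is equivalent to $(1+a_{k}^{2})^{s/2}$ times the $L^{2}$-norm. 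For (2) and (3) it relies on the spectral embedding $\|w\|_{W^{s,p}(M)} \le C (1+a_{k}^{2})^{s/2}\|w\|_{L^{2}(M)}$ for $w \in E_{k}$ (a uniform-in-$t$ variant of Proposition~\ref{estimq}), on the $L^{2}$-boundedness of $\cos(t\sqrt{-\Delta})$ and $\sin(t\sqrt{-\Delta})/\sqrt{-\Delta}$ uniform in $t$, and on the time integrability $\int_{\mathbb{R}}\langle t\rangle^{-p\delta}\,dt < \infty$ provided by $\delta > 1/p$; this is the only place where the hypothesis $\delta > 1/p$ enters. The concentration inequality on $S_{k}$ (Proposition~\ref{concentration}) then gives a sub-Gaussian tail for $F_{k}(\omega_{k})$ around $\mathcal{M}_{k}$ at scale $(1+a_{k}^{2})^{s/2}$, uniformly in $k$.

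For the aggregation, I would use orthogonality of the spectral blocks in case (1), and a Littlewood--Paley square-function estimate in cases (2)--(3) (valid in $W^{s,p}$ for $p \in [2, \infty)$), to obtain
\[
F(u)^{2} \;\lesssim\; \sum_{k} \alpha_{k}^{2}\,\tilde{r}_{k}^{2}\,F_{k}(\omega_{k})^{2}.
\]
Taking $L^{q}$-moments and applying Minkowski's inequality, the radial bound $\mathbb{E}[\tilde{r}_{k}^{2q}]^{1/q} \le C q^{2/\gamma}$ (coming from $H_{\gamma}$) together with the block-wise bound $\|F_{k}(\omega_{k})\|_{L^{2q}(S_{k})} \le C(1+a_{k}^{2})^{s/2}$ (from the concentration just established) gives
\[
\mathbb{E}\bigl[F(u)^{2q}\bigr]^{1/q} \le C\,q^{2/\gamma}\,N^{2}.
\]
Markov's inequality then bounds $\mathbb{P}(F(u) > \lambda)$ by $(C q^{1/\gamma} N/\lambda)^{2q}$, and optimizing the integer $q$ in terms of $\lambda/N$ produces the stated stretched-exponential tail, with the exponent $\gamma/(\gamma+1)$ coming out of this optimization.

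The main obstacle I anticipate is in cases (2) and (3): establishing the uniform-in-$t$ Lipschitz and median bounds for $F_{k}$ on $S_{k}$ in the space-time $W^{s,p}$-norm. This requires combining the spectral estimate of Proposition~\ref{estimq}, the uniform-in-$t$ mapping properties of the wave propagator restricted to spectral blocks, and a Littlewood--Paley square-function decomposition valid in $W^{s,p}(M)$ for $p \in [2, \infty)$ (so that the aggregation step works despite $W^{s,p}$ being non-Hilbertian when $p > 2$). Once these deterministic ingredients are in place, the probabilistic aggregation is a routine moment computation as sketched above.
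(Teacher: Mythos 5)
Your plan takes a genuinely different route from the paper's, and I think it contains two gaps. The paper argues pointwise: Lemma~\ref{lem.decroissance} combines the angular Gaussian tail of Lemma~\ref{lem.est} with the radial hypothesis $H_\gamma$ to get a tail $e^{-c(\lambda/\alpha_k)^{2\gamma/(\gamma+2)}}$ for $|e^{it\sqrt{-\mathbf{\Delta}}}u_k(x)|$ at each fixed $(x,t)$; the Khintchine-type Lemma~\ref{lem.independance} then aggregates the independent centered values $u_k(x,t)$ at the cost of a factor $\sqrt{q}$; two applications of Minkowski exchange $L^{2q}(dP)$ with $L^p(\mathbb{R}_t\times M)$ (this is where $\delta>1/p$ enters), giving $\|F(u)\|_{L^{2q}(dP)}\lesssim q^{(\gamma+1)/\gamma}\|(\alpha_k)\|_s$; Markov and optimization in $q$ produce the exponent $\gamma/(\gamma+1)$. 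You instead concentrate the full space--time norm $F_k$ on each sphere $S_k$ and then aggregate the blocks by a \emph{deterministic} Littlewood--Paley square-function estimate. This is an appealing idea (it trades the pointwise Khintchine for a concentration-of-norm argument), but as written it has the following problems.

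First, the aggregation step $F(u)^2\lesssim\sum_k\alpha_k^2\tilde r_k^2 F_k(\omega_k)^2$ rests on a deterministic square-function bound $\|\sum_k v_k\|_{L^p}^2\lesssim\sum_k\|v_k\|^2_{L^p}$ for spectrally localized $v_k$. The proposition, however, only assumes $a_{k+1}-a_k\geq D$ (together with $a_{k+1}\leq Ca_k$), which allows blocks of \emph{unit} width, far from dyadic. For $p>2$ and such fine blocks this deterministic inequality is simply false (think of $v_k=e^{ikx}$ on the circle at $p=4$). The paper's pointwise Khintchine inequality is a moment bound for independent symmetric random variables and requires no structural condition on the $a_k$; your route would need to restrict to dyadic-like blocks to be salvaged, which is weaker than the stated proposition.

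Second, your block-wise moment bound $\|F_k(\omega_k)\|_{L^{2q}(S_k)}\leq C(1+a_k^2)^{s/2}$ uniformly in $q$ does not follow from the L\'evy concentration of Proposition~\ref{concentration}: that gives $\|F_k-\mathcal M_k\|_{L^{2q}}\lesssim\|F_k\|_{\text{Lips}}\sqrt{q/N_k}$, so for $N_k$ bounded the $L^{2q}$ norm grows like $\sqrt q$. This omission is visible in your final step: you claim the moment growth $\|F(u)\|_{L^{2q}}\lesssim q^{1/\gamma}\|(\alpha_k)\|_s$ gives, after optimizing $(Cq^{1/\gamma}N/\lambda)^{2q}$, the exponent $\gamma/(\gamma+1)$; in fact that optimization gives $\exp(-c(\lambda/N)^{\gamma})$, which is a different (and stronger) exponent than stated. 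Once the missing $\sqrt{q}$ is restored, your method would give $q^{(\gamma+2)/(2\gamma)}$ and exponent $2\gamma/(\gamma+2)$, still not the paper's $\gamma/(\gamma+1)$. This mismatch is a sign that the aggregation step needs to be rethought: in the paper the factor $\sqrt q$ comes from Lemma~\ref{lem.independance}, not from the radial tail, and the $(\gamma+2)/(2\gamma)$ comes from the pointwise combination of the angular Gaussian and the radial stretched-exponential.
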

 \begin{proof}
 Des r\'esultats similaires apparaissent dans un cadre l\'eg\`erement diff\'erent dans~\cite{BuTz}. Les d\'emon\-strations de ces trois estim\'ees sont essentiellement identiques (la premi\`ere n'ayant pas de d\'ependance en temps, tandis que les deux derni\`eres ne diff\'erent que par le comportement de la solution des ondes correspondant \`a la donn\'ee initiale de fr\'equence $0$). Nous nous limiterons donc \`a d\'emontrer~\eqref{eq.gdedev4}. On a $e^{it \sqrt{ - \mathbf{\Delta}}}u= \sum_k e^{it \sqrt{ - \mathbf{\Delta}}}u_k$.
 \begin{lem}\label{lem.decroissance}
 Il existe $C,c>0$ tels que pour tous $(x,t)\in \mathbb{R}_t \times M$
 $$P_{h_k} ( |e^{it \sqrt{ - \mathbf{\Delta}}}u_k|(x,t) >\lambda) \leq C e^{-c\bigl(\frac{\lambda} { \alpha_k} \bigr) ^{\frac{ 2 \gamma}{ \gamma +2}}}
 $$
 \end{lem}
 En effet, quitte \`a remplacer la base orthonormale de $E_k, (e_n)$ par $e^{it { \omega_n}} e_n$, on se ram\`ene au cas $t=0$. Si $D_{0}$ est assez grand, on peut utiliser  le lemme~\ref{lem.est}, avec $h_{k}=a_{k}^{-1}$, donc
 \begin{multline}
  P_{k} ( |u_k|(x) >\lambda)= \int_0^{+\infty}\int_{z\in S_{k}} 1_{r |z\cdot b_{x,h_{k}}|>\lambda} dz dq_k (r)\\
  =\int_0^{+\infty}P_{h_{k}} ( \{ u; |u(x)| \geq \frac \lambda r \})dq_k (r)\leq \int_0^{+\infty}
  e^{- c_{2}\bigl(\frac \lambda {r \alpha_k }\bigr)^2 } dp_k (r)
  \end{multline}
  D'apr\`es l'hypoth\`ese~\ref{eq.decroi}, on conclut si $\gamma= +\infty$ tandis que si $\gamma<+\infty$, on obtient
 $$ P_{k} ( |u_k|(x) >\lambda)\leq e^{-c_{2} \bigl(\frac \lambda {\alpha_k \rho }\bigr)^2 }+ \int_\rho^{+\infty} dp_k (r) \leq e^{- c_{2}\bigl(\frac \lambda {\alpha_k \rho }\bigr)^2 }+ Ce^{-c\rho^\gamma}
 $$ qu'on optimise en choisissant $\rho = ( \lambda / \alpha_k)^{2/ (\gamma+2)}$, ce qui donne le lemme~\ref{lem.decroissance}.
 \begin{lem}\label{lem.independance}
 Soient $(u_k)$ des variables al\'eatoires ind\'ependantes, \`a valeurs r\'eelles et de moments impairs tous nuls. Alors pour tout $q\in \mathbb{N}^*$, 
 $$\mathbb{E} \Bigl( \bigl(\sum_k u_k\bigr)^{2q}\Bigr)\leq q^q \mathbb{E} \Bigl( \bigl( \sum_k (u_k)^{2}\bigr)^q\Bigr)
 $$
 \end{lem}
 \begin{proof}
 On s'inspire de la preuve classique des in\'egalit\'es de Khintchine (voir par exemple~\cite[Th\'eor\`eme 4.6]{LiQu}.
 On calcule
 $$
 \mathbb{E} \Bigl( \bigl(\sum_{k\leq K} u_k\bigr)^{2q}\Bigr) = \sum_{\alpha_1+ \cdots +\alpha_K= 2q} \frac{(2q)!} {\alpha_1! \dots \alpha_K !} \mathbb{E} \Bigl( u_1^{\alpha_1}\dots u_K^{\alpha_k}\Bigr)
 $$
 En utilisant l'ind\'ependance et l'annulation des moments, on remarque que dans la somme ci dessus, les seuls termes non nuls sont ceux pour lesquels tous les $\alpha_i$ sont pairs, soit
$$ \mathbb{E} \Bigl( \bigl(\sum_{k\leq K} u_k\bigr)^{2q}\Bigr) = \sum_{\beta_1+ \cdots +\beta_K= q} \frac{(2q)!} {(2\beta_1)! \dots (2\beta_K) !} \mathbb{E}(u_1^{2\beta_1}\dots u_K^{2\beta_k})
 $$
 Comme $(2\beta)! \geq 2^{\beta} \beta!$ et $(2q)! \leq 2^q q! q^q$, on obtient
 \begin{multline}
 \mathbb{E} \Bigl( \bigl(\sum_{k\leq K} u_k\bigr)^{2q}\Bigr) \leq  \frac{1 }{ 2^q}\sum_{\beta_1+ \cdots +\beta_K= q} \frac{(2q)!} {(\beta_1)! \dots (\beta_K) !} \mathbb{E}(u_1^{2\beta_1}\dots u_K^{2\beta_k})\\
 =\frac{ (2q)!} { 2^q q!} \sum_{\beta_1+ \cdots +\beta_K= q} \frac{q!} {(\beta_1)! \dots (\beta_K) !} \mathbb{E}(u_1^{2\beta_1}\dots u_K^{2\beta_k})\leq q^q \mathbb{E}\Bigl( \bigl(\sum_k (u_k)^2\bigr)^q\Bigr)
 \end{multline} 
 \end{proof}
 On peut maintenant conclure la preuve de la proposition~\ref{prop.gdedev}: Pour $(x,t)$ fix\'es, on a d'apr\`es le lemme~\ref{lem.independance}
 \begin{multline}
 \|\cos(t\sqrt{ - \mathbf{\Delta}}) \sum_k u_k\|_{L^{2q}(dP)}\leq  \sqrt{q} \| \bigl(\sum_k | \cos(t\sqrt{ - \mathbf{\Delta}})u_k|^2\bigr)^{1/2}\|_{L^{2q}(dP)}\\
 \leq C \sqrt q \Bigl(\sum_k \| | \cos(t\sqrt{ - \mathbf{\Delta}})u_k|^2\|_{L^{q}(dP)}\Bigr)^{1/2} = \sqrt q \Bigl(\sum_k \|  \cos(t\sqrt{ - \mathbf{\Delta}})u_k\|^2_{L^{2q}(dP)}\Bigr)^{1/2} \\
 \leq \sqrt q \Bigl( \sum_k \Bigl(\int_0^{+\infty}  2q \lambda^{2q-1} P_k ( |\cos(t\sqrt{ - \mathbf{\Delta}})u_k|(x,t) >\lambda) d \lambda \Bigr)^{\frac 1 q} \Bigr)^{1/2}\\
 \leq  C\sqrt q (q \frac {\gamma+2} {\gamma})^{\frac 1 {2q}} \|(\alpha_k)\|_{\ell^2} \Gamma^{\frac 1 {2q}} \bigl(q \frac {\gamma+2} {\gamma}\bigr) 
 \leq C q^{\frac{\gamma+1} \gamma } \|(\alpha_k) \|_{\ell^2}
 \end{multline}
 o\`u dans la derni\`ere in\'egalit\'e on a utilis\'e le lemme~\ref{lem.decroissance}. 
 Finalement, on obtient que pour tous $2 \leq p \leq 2q <+\infty$, 
 \be\begin{aligned}
& \| \langle t\rangle^{- \delta} \cos(t\sqrt{ - \mathbf{\Delta}}) u\|_{L^{2q}( dP); L^p(\mathbb{R} \times M)} \leq \| \langle t\rangle^{- \delta} \cos(t\sqrt{ - \mathbf{\Delta}})u\|_{ L^p(\mathbb{R} \times M); L^{2q}( dP)}\\
& \leq C \|\alpha_k\|_{l^2} q^{\gamma+1\over\gamma}\| \langle t\rangle^{- \delta}\|_{ L^p(\mathbb{R} \times M)} \leq C q^{\frac{\gamma+1} \gamma } \|\alpha_k\|_{l^2}
 \end{aligned}\ee
 Par l'in\'egalit\'e de Tchebitchev on obtient
 \be
 \mathbb{P} ( \{u; \|\langle t\rangle^{- \delta}  \cos(t\sqrt{ - \mathbf{\Delta}})u \|_{L^p( \mathbb{R}_t \times M} > \lambda \}) \leq ({C q^{\frac{\gamma+1} \gamma } \|\alpha_k\|_{l^2}\over \lambda})^{2q}
 \ee 
et on conclut et le choix $q= {1\over e}({\lambda \over \|\alpha_k\|_{l^2}})^
{{ \gamma \over \gamma+1}}\geq 2 $ si ${\lambda \over C\|\alpha_k\|_{l^2}} $ est assez grand, 
$$
 \mathbb{P} ( \{u; \|\langle t\rangle^{- \delta}  \cos(t\sqrt{ - \mathbf{\Delta}})u \|_{L^p( \mathbb{R}_t \times M} > \lambda \}) \leq e^{-c({\lambda \over \|\alpha_k\|_{l^2}})^
{{ \gamma \over \gamma+1}}} 
 $$
Enfin, si ${\lambda \over \|\alpha_k\|_{l^2}}$ est born\'e, quitte \`a diminuer $c$,
on a $1\leq 2e^{-c({\lambda \over \|\alpha_k\|_{l^2}})^
{{ \gamma \over \gamma+1}}}$,
ce qui termine la preuve de la proposition~\ref{prop.gdedev} dans le cas $s=0$. Le cas g\'en\'eral s'en d\'eduit en remarquant que $\|v\|_{W^{s,p}( \mathbb{R} \times M)} \sim \| \langle |D_t| + \sqrt{ - \mathbf{\Delta}} \rangle ^s v\|_{L^p( \mathbb{R} \times M)}$. 
 \end{proof}
 
% biblio : choisir si la bilbio apparait comme un chapitre ou une section dans la table des matieres
%\addcontentsline{toc}{section}{\refname}
%\addcontentsline{toc}{chapter}{Bibliographie}

% choisir le chemin adequat vers l'unique biblio mise a jour, par exemple : 
%\bibliographystyle{fplain}
%\bibliography{prosob.bib}
\renewcommand{\No}{\kern-.25em\lower.2ex\hbox{\char'27}}
\def\cprime{$'$}

\end{document}